\newcommand{\bra}[1]{\langle #1 |}        
\newcommand{\ket}[1]{{| #1 \rangle}}      
\newcommand{\nn}{\nonumber}
\newcommand{\bea}{\begin{eqnarray}}
\newcommand{\ena}{\end{eqnarray}}
\newcommand{\be}{\begin{eqnarray*}}
\newcommand{\en}{\end{eqnarray*}}
\def\bel{\begin{eqnarray}}
\def\enl{\end{eqnarray}}
\newcommand{\C}{{\mathbb C}}
\newcommand{\Z}{{\mathbb Z}}
\newcommand{\al}{{\alpha}}
\newcommand{\si}{{\sigma}}
\newcommand{\la}{{\lambda}}
\newcommand{\La}{{\Lambda}}
\newcommand{\mc}{\mathcal}
\def\P{\mathcal P}
\numberwithin{equation}{section}
\numberwithin{equation}{section}
\newtheorem{thm}{Theorem}[section]
\newtheorem{prop}[thm]{Proposition}
\newtheorem{lem}[thm]{Lemma}
\newtheorem{cor}[thm]{Corollary}
\theoremstyle{remark}
\newtheorem{rem}[thm]{Remark}
\newcommand{\gl}{\mathfrak{gl}}
\newcommand{\T}{\otimes}
\renewcommand{\deg}{\mathop{\rm deg}}
\newcommand{\one}{{\bf 1}}
\renewcommand{\triangle}{\Delta}
\newcommand{\W}{\mathcal{W}}
\newcommand{\F}{\mathcal{F}}
\newcommand{\sln}{\mathfrak{sl}_n}
\newcommand{\ba}{\bs a}
\newcommand{\bb}{\bs b}
\newcommand{\bla}{\mbox{$\bs \la$}}
\newcommand{\bxi}{\mbox{\boldmath$\xi$}}
\newcommand{\bet}{\mbox{\boldmath$\eta$}}
\newcommand{\bxis}{\mbox{\boldmath$\scriptstyle\xi$}}
\newcommand{\bets}{\mbox{\boldmath$\scriptstyle\eta$}}
\newcommand{\bN}{\bs N}
\newcommand{\M}{\mathcal{M}}
\newcommand{\E}{{\mathcal{E}}}
\newcommand{\chb}{\overline{\chi}}
\newcommand\Ref{\eqref}
\newcommand\bs{\boldsymbol}
\begin{document}
\title[Tensor products of Fock modules and $\mathcal{W}_n$ characters]
{Quantum continuous $\mathfrak{gl}_\infty$: 
Tensor products of Fock modules and $\mathcal{W}_n$ characters}

\author{B. Feigin, E. Feigin, M. Jimbo, T. Miwa and E. Mukhin}
\address{BF: Landau Institute for Theoretical Physics,
Russia, Chernogolovka, 142432, prosp. Akademika Semenova, 1a,   \newline
Higher School of Economics, Russia, Moscow, 101000,  Myasnitskaya ul., 20 and
\newline
Independent University of Moscow, Russia, Moscow, 119002,
Bol'shoi Vlas'evski per., 11}
\email{bfeigin@gmail.com}
\address{EF:
Tamm Department of Theoretical Physics, 
Lebedev Physics Institute, Russia, Moscow, 119991,
Leninski pr., 53 and\newline
French-Russian Poncelet Laboratory, Independent University of Moscow, 
Moscow, Russia }
\email{evgfeig@gmail.com}
\address{MJ: Department of Mathematics,
Rikkyo University, Toshima-ku, Tokyo 171-8501, Japan}
\email{jimbomm@rikkyo.ac.jp}
\address{TM: Department of Mathematics,
Graduate School of Science,
Kyoto University, Kyoto 606-8502,
Japan}\email{tmiwa@kje.biglobe.ne.jp}
\address{EM: Department of Mathematics,
Indiana University-Purdue University-Indianapolis,
402 N.Blackford St., LD 270,
Indianapolis, IN 46202}\email{mukhin@math.iupui.edu}

\begin{abstract}
We construct a family of irreducible representations of the quantum
continuous $\gl_\infty$ whose characters coincide with the characters
of representations in the minimal models of the $\mc W_n$ algebras of
$\gl_n$ type. In particular, we obtain a simple combinatorial model for all
representations of the $\mc W_n$-algebras appearing in the minimal models
in terms of $n$ interrelating partitions.
\end{abstract}

\maketitle

\section{Introduction}
In \cite{FFJMM} we initiated a study of representations of the algebra
which we denote by $\E$ and call quantum continuous $\gl_\infty$. This
algebra depends on two parameters $q_1,q_2$ and
is closely related to the Ding-Iohara algebra introduced in
\cite{DI} and then considered in \cite{FHHSY},
\cite{FT}. Conjecturally, the algebra $\E$ is isomorphic to the infinite
spherical double affine Hecke algebra constructed in \cite{SV1}, \cite{SV2}. 

We argued, that the representation theory of the algebra $\E$ has many
features similar to the representation theory of $\gl_\infty$. In
particular, it has a family of vector representations. The algebra
$\E$ is not a Hopf algebra. However there is ``a comultiplication
rule'' which under some conditions, defines an action of $\E$ on a
tensor product of $\E$-modules.  We used the comultiplication rule and
the vector representations to construct Fock modules by the standard
semi-infinite construction (which can also be called 
an inductive limit construction or semi-infinite wedge representation). 
Similarly to the case of $\gl_\infty$, the Fock modules have a
natural basis labeled by partitions.

On the other hand, due to the quantum nature of the algebra $\E$, the
representation theory of $\E$ is richer than that of
$\gl_\infty$. In \cite{FFJMM}, we used the semi-infinite construction
to define another family of modules for the case when
$q_1^{1-r}q_3^{k+1}=1$ with $r\in\Z_{>1}$, $k\in\Z_{>0}$. These
modules do not have $\gl_\infty$ analogs. Their bases are given by the
so-called $(k,r)$-admissible partitions $\la$ satisfying
$\la_i-\la_{i+k}\geq r$.

In \cite{FJMM1} it was shown that, when the parameters $q,t$ satisfy
$q^{1-r}t^{k+1}=1$, the Macdonald polynomials $P_\la(q,t)$ with
$(k,r)$-admissible partitions $\la$ form a basis of the smallest ideal
in the space of symmetric polynomials stable under the Macdonald
operators, see also \cite{K},\cite{SerVes}. The $\E$-module we
constructed is an inductive limit of these ideals in an appropriate
sense.

In this paper we continue the study of representations of $\E$ started
in \cite{FFJMM}.  We find that this algebra has a large class of
modules which are tame in the following sense. The algebra $\E$ has a
commutative subalgebra generated by $\psi_i^{+},\psi_{-i}^{-}$, where
$i\in \Z_{\geq 0}$, which serves as an analog of Cartan subalgebra. We
call a module tame if this subalgebra acts diagonally with the simple joint
spectrum.

In contrast to \cite{FFJMM}, to construct new modules, we do not
use the semi-infinite constructions but use the Fock spaces as
building blocks. Namely, we consider subquotients of the
 tensor product of several Fock spaces.

The Fock space depends on a continuous parameter $u$ and has a basis
labeled by partitions. Therefore the tensor product of $n$ Fock
modules depends on $n$ complex parameters $u_i$, where $i=1,\dots,n$, 
and has a basis
labeled by $n$-tuples of partitions. The subalgebra of $\psi^{\pm}_i$
acts diagonally in this basis. Remarkably, all submodules and
quotient modules we study also have a basis of $n$-tuples of partitions
with some conditions. It means that we never have to deal with linear
combinations of the basic vectors and all considerations are purely
combinatorial.

For generic parameters $q_1,q_2,u_i$, where $i=1,\dots,n$, 
the tensor products of Fock modules are irreducible. 
We consider special values which we call resonances.
In the case of the resonances, the $\E$ action on the tensor product of the
Fock spaces is not defined. However we find a ``subquotient''
for which it is well-defined. This subquotient is an irreducible $\E$-module
with a basis labeled by
$n$ partitions $\la^{(1)},\dots,\la^{(n)}$ with conditions
$$
\la_s^{(i)}\geq \la_{s+b_i}^{(i+1)}-a_i, \qquad {\rm where}\ i=1,\dots, n,\ s\in\Z_{>0}.
$$
Here $\la^{(n+1)}=\la^{(1)}$ and $a_i,b_i\in\Z_{\geq 0}$ are the parameters of the module.  The
conditions with $i=1,\dots,n-1$ correspond to the submodules appearing
under resonances $u_i/u_{i+1}=q_1^{a_i+1}q_3^{b_i+1}$, where
$q_3=(q_1q_2)^{-1}$. The $i=n$ condition corresponds to the quotient module
appearing under the resonance $q_1^{p'}q_3^p=1$, where
$p'=\sum_{i=1}^n(a_i+1)$, $p=\sum_{i=1}^n(b_i+1)$.

We use a recursion to compute the graded character of the subquotient
and find that it coincides with the character of the $\mathcal
W_n$-module from the minimal $(p',p)$ theory labeled by the $\widehat{\sln}$
weights $\bs\eta=\sum_{i=1}^{n}a_i\omega_i $ and 
$\bs\xi=\sum_{i=1}^{n}{b_i\omega_i}$
, see Theorem \ref{thm:Wn}.
The recursion in the case $n=2$ is similar to the recursion in \cite{A}.
 
 For the information on $\mc W_n$ and its representations 
we refer to \cite{FKW}. 
 The $\mc W_n$-modules are not well understood apart from the Virasoro 
 $n=2$ case and we hope that such a relatively simple combinatorial 
 description will shed
 new light on the subject. Moreover, there are many indications that
 the relation between $\E$ and $\mc W_n$ algebras is much deeper than
 the one described in this paper.

The paper is constructed as follows. In Section \ref{alg sec} we
recall the main definitions and constructions from \cite{FFJMM}. In
Section \ref{mod sec} we construct the subquotients of the tensor
products of Fock modules and prove that they are tame and irreducible.
In Section \ref{char sec} we describe the recursion for the set of
$n$-tuples of partitions with conditions,
give a solution of the recursion in a bosonic form and compare it to
the characters of $\mc W_n$ algebras. In Section \ref{iso sec} we show
several isomorphisms of $\E$-modules, in particular, the isomorphism
of the subquotients in the case $p=n+1$ to the $\E$-modules with bases
of $(k,r)$-admissible partitions of \cite{FFJMM}.

\section{Quantum continuous $\mathfrak{gl}_\infty$}\label{alg sec}
In this section we recall the basic definitions 
about quantum continuous $\mathfrak{gl}_\infty$
and its representations following \cite{FFJMM}.

\subsection{Algebra}
Let $q_1,q_2,q_3\in\C$ be  
complex parameters satisfying the relation $q_1q_2q_3=1$. 
In this paper we assume that neither of $q_i$ is a root of unity.
Let
\be
g(z,w)=(z-q_1w)(z-q_2w)(z-q_3w).
\en
The {\it quantum continuous $\mathfrak{gl}_\infty$} 
is an associative algebra $\E$ with generators
$e_i$, $f_i$, where $i\in\Z$, $\psi^+_i$, $\psi^-_{-i}$, where
$i\in\Z_{\geq 0}$,  and $(\psi_0^\pm)^{-1}$ satisfying the following 
defining relations:
\begin{gather}
\label{rel1}
g(z,w)e(z)e(w)=-g(w,z)e(w)e(z),\qquad
g(w,z)f(z)f(w)=-g(z,w)f(w)f(z),\\
\label{rel2}
g(z,w)\psi^{\pm}(z)e(w)=-g(w,z)e(w)\psi^{\pm}(z),\quad
g(w,z)\psi^{\pm}(z)f(w)=-g(z,w)f(w)\psi^{\pm}(z),\\
\label{rel3}
[e(z), f(w)]=\frac{\delta(z/w)}{g(1,1)}(\psi^{+}(z)-\psi^{-}(z)),\\
\label{rel4}
[\psi^\pm_i,\psi^\pm_j]=0, \qquad [\psi^\pm_i,\psi^\mp_j]=0, \\
\label{rel5}
\psi^\pm_0(\psi^\pm_0)^{-1}=(\psi^\pm_0)^{-1}\psi^\pm_0=1,\\
\label{rel6}
[e_0,[e_1,e_{-1}]]=0, \qquad [f_0,[f_1,f_{-1}]]=0.
\end{gather}
Here $\delta(z)=\sum_{n\in\Z} z^n$ denotes the formal
delta function and the generating series of the generators of $\E$ are given by
\be
e(z)=\sum_{i\in\Z} e_iz^{-i},
\quad  f(z)=\sum_{i\in\Z} f_iz^{-i},\quad
\psi^\pm(z)=\sum_{\pm i\ge 0}\psi^{\pm}_iz^{-i}.
\en
Note that the algebra $\E$ depends on the {\it unordered} set of parameters
$\{q_1,q_2,q_3\}$, as all $q_i$ enter the relations symmetrically
through the function $g(z,w)$.

The algebra generated by $\{\psi^+_{i},\psi_{-i}^-\}_{i\in\Z_{\geq 0}}$ 
is commutative. 
We call an $\E$-module $V$ {\it tame} if the operators 
$\{\psi^+_{i},\psi_{-i}^-\}_{i\in\Z_{\geq 0}}$ act by 
diagonalizable operators with simple joint spectrum.

The elements $\psi_0^\pm\in\E$ are central and invertible. 
For $x,y\in\C^\times$, we call an $\E$-module 
$V$  of {\it level $(x,y)$} if $\psi_0^+$ acts by 
$x$ and  $\psi_0^-$ by $y$. 

Let $\phi^\pm_{\emptyset}(z)\in\C[[z^{\mp1}]]$ be a formal series in
$z^{\mp1}$.  We call an $\E$-module $V$ {\it highest weight module
  with highest vector $v\in V$ and highest weight
  $\phi^\pm_{\emptyset}(z)$} if $v$ generates $V$ and 
\be f(z)v=0, \qquad
\psi^+(z)v=\phi^+_{\emptyset}(z) v, \qquad \psi^-(z)v=\phi^-_{\emptyset}(z) v.
\en

The comultiplication rule, see \cite{FFJMM}, \cite{DI}, is given by
\begin{gather}\label{tre}
\triangle e(z)= e(z)\T 1 + \psi^-(z)\T e(z),\\
\label{trf}
\triangle f(z)= f(z)\T \psi^+(z) + 1\T f(z),\\
\label{trpsi}
\triangle \psi^\pm(z)= \psi^\pm(z)\T \psi^\pm(z).
\end{gather}
These formulas do not define a comultiplication in the usual sense
since the right hand sides are not the elements of $\E\T\E$, as they
contain infinite sums.  However, under some conditions the
comultiplication rule does give an $\E$-module structure on $V_1\T
V_2$. We will discuss these conditions
somewhere else. In \cite{FFJMM} and in this paper we use the
comultiplication rule as a way to obtain the formulas for the $\E$
action and then we check directly that the answer is consistent with
the relations in the algebra $\E$.


Note that this comultiplication rule is compatible with
permutations of $q_i$.

We say that a set of complex parameters $v_1,\dots,v_n$ is {\it generic} if no
monomial in $v_i$ is equal to $1$: $\prod_{i=1}^nv_i^{j_i}=1$ with
$j_i\in\Z$ implies that $j_1=\dots=j_n=0$.

Clearly, $q_1,q_2$ are generic if and only if $q_1,q_3$ are generic and
$q_1,q_2$ are generic if and only if $q_2,q_3$ are generic. 
If a set of parameters is generic then any subset of that set is also generic.
Throughout the paper we will assume that neither of $q_i$ is a root of
unity. That is we always assume that
$q_1$ is generic, $q_2$ is generic, $q_3$ is generic.

The algebra $\E$ is $\Z$-graded. 
The degrees of generators are given by
\bea\label{grading}
\deg e_i=1,\qquad \deg f_i=-1,\qquad 
\deg \psi^\pm_i=0.
\ena
In fact $\E$ is $\Z^2$-graded, see \cite{FFJMM}, Lemma 2.4,
but we do not need the second component of the grading in this paper.

\subsection{Vector representations}
For $u\in\C^\times$, let  $V(u)$ be a complex vector space 
spanned by basis $\ket{i}_u$, where $i\in\Z$. 

We also use the notation ${}_u\bra{i}$ with $i\in\Z$ for the dual
basis.  When it is clear we skip the subscript $u$ and write simply
$\ket{i}$, $\bra{i}$.

\begin{lem}\label{V_u}
The formulas
\begin{align*}
&(1-q_1)e(z)\ket{i}=\delta(q_1^iu/z)\ket{i+1}\,,
\\
&(q_1^{-1}-1)f(z)\ket{i}=\delta(q_1^{i-1}u/z)\ket{i-1}\,,
\\ &\psi^+(z)\ket{i}=\frac{(1-q_1^iq_3u/z)(1-q_1^iq_2u/z)}{(1-q_1^iu/z)(1-q_1^{i-1}u/z)}\ket{i},\\
&\psi^-(z)\ket{i}=\frac{(1-q_1^{-i}q_3^{-1}z/u)(1-q_1^{-i}q_2^{-1}z/u)}{(1-q_1^{-i}z/u)(1-q_1^{-i+1}z/u)}\ket{i},
\end{align*}
define a structure of an irreducible tame $\E$-module on $V(u)$ of level $(1,1)$. 
\end{lem}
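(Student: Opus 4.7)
The plan is to verify the defining relations \eqref{rel1}--\eqref{rel6} directly on the basis $\{\ket{i}\}_{i\in\Z}$, exploiting two structural features of the formulas: the operators $\psi^{\pm}(z)$ act diagonally as explicit rational functions $\phi^\pm_i(z)$ with known zeros and poles, while $e(z)$ and $f(z)$ act as weighted shifts $\ket{i}\mapsto\ket{i\pm 1}$ supported on the single evaluation points $z=q_1^iu$ and $z=q_1^{i-1}u$. This reduces each relation either to the vanishing of a $g$-factor on a delta-support or to a short residue bookkeeping.

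First I would dispatch the cheap parts. The constant term of $\phi^+_i(z)$ as $z\to\infty$ is $1$, and of $\phi^-_i(z)$ as $z\to 0$ is $1$, giving level $(1,1)$ and \eqref{rel5}. Relation \eqref{rel4} is immediate from diagonality. Since $q_1$ is not a root of unity, the rational functions $\phi^\pm_i(z)$ are pairwise distinct (their poles at $q_1^{i}u, q_1^{i-1}u$ identify $i$), which yields tameness. Irreducibility will then follow at the end: any nonzero subspace stable under the tame action is a span of basis vectors, and $e_0\ket{i}=(1-q_1)^{-1}\ket{i+1}$, $f_0\ket{i}=(q_1^{-1}-1)^{-1}\ket{i-1}$ connect them all.

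For the quadratic relations \eqref{rel1}, I would note that $e(z)e(w)\ket{i}$ is supported on $z=q_1^{i+1}u$, $w=q_1^iu$, i.e.\ on $z/w=q_1$, a zero of $g(z,w)$; hence $g(z,w)e(z)e(w)\ket{i}=0$, and symmetrically for the other ordering and for $f$. For \eqref{rel2}, after absorbing the delta at $w=q_1^iu$ from $e(w)\ket{i}$, the identity reduces to
\[
\frac{\phi^\pm_{i+1}(z)}{\phi^\pm_i(z)}=-\frac{g(q_1^iu,z)}{g(z,q_1^iu)},
\]
which I would verify by a direct telescoping of the four linear factors, using $q_1q_2q_3=1$ to rewrite the factors of $g(q_1^iu,z)$. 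The $f$ case is the mirror image.

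The main obstacle will be \eqref{rel3}. Computing both orderings and applying the identity $\delta(a/z)\delta(a/w)=\delta(z/w)\delta(a/z)$ gives
\[
[e(z),f(w)]\ket{i}=\frac{\delta(z/w)}{(1-q_1)(q_1^{-1}-1)}\bigl(\delta(q_1^{i-1}u/z)-\delta(q_1^iu/z)\bigr)\ket{i}.
\]
On the other side, $\phi^+_i(z)-\phi^-_i(z)$ is a formal distribution supported at the two simple poles $z=q_1^iu$ and $z=q_1^{i-1}u$, equal to a sum of delta functions whose coefficients are the residues of the rational eigenvalue; I would compute these residues explicitly and verify that the prefactor $g(1,1)=(1-q_1)(1-q_2)(1-q_3)$ cancels correctly against $(1-q_2)(1-q_3)/(q_1^{-1}-1)$ at both poles. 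Finally, for the Serre relation \eqref{rel6}, the formula $e_k\ket{i}=(1-q_1)^{-1}(q_1^iu)^k\ket{i+1}$ gives $[e_1,e_{-1}]\ket{i}=(1-q_1)^{-2}(q_1-q_1^{-1})\ket{i+2}$, an $i$-independent scalar times the shift by two, so $e_0$ commutes with $[e_1,e_{-1}]$ on every $\ket{i}$; the $f$ case is symmetric.
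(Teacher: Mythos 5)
Your proposal is correct, and it is worth noting that it is genuinely more self-contained than what the paper does: the paper's proof of Lemma \ref{V_u} consists of citing \cite{FFJMM}, \cite{FHHSY} for the fact that the formulas define an $\E$-module, and asserting that tameness and irreducibility are ``easy to check.'' You supply the actual verification, and the two computations that carry all the weight do check out: for \eqref{rel2} the ratio identity $\phi^\pm_{i+1}(z)/\phi^\pm_i(z)=-g(q_1^iu,z)/g(z,q_1^iu)$ holds because $q_1q_3=q_2^{-1}$ and $q_1q_2=q_3^{-1}$ turn the three numerator factors into $-(q_1^iu-q_1z)(q_1^iu-q_2z)(q_1^iu-q_3z)/z^3$; and for \eqref{rel3} the residues of the common rational function $\phi^{\pm}_i$ at $z=q_1^iu$ and $z=q_1^{i-1}u$, divided by $g(1,1)=(1-q_1)(1-q_2)(1-q_3)$, give $\mp q_1/(1-q_1)^2$, which is exactly $\mp\bigl((1-q_1)(q_1^{-1}-1)\bigr)^{-1}$ from the commutator side. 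One small point to make explicit in the tameness step: the poles of the eigenvalue at $q_1^iu$ and $q_1^{i-1}u$ identify $i$ only because they cannot be cancelled by the zeros at $q_1^iq_2u$, $q_1^iq_3u$, which uses the standing assumption that no $q_j$ equals $1$ (equivalently $q_2,q_3\neq 1$); with that remark your argument is complete.
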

\begin{proof}
These formulas define an $\E$-module by \cite{FFJMM}, \cite{FHHSY}. 
It is easy to check that it is tame and irreducible.
\end{proof}

We call the $\E$-module $V(u)$ the {\it vector representation}. Note that
$q_1$ plays a special role in the definition of $V(u)$ while $q_2$ and
$q_3$ participate symmetrically. Therefore there are two other vector
representations obtained from $V(u)$ by switching roles of $q_i$. We do
not use these other two modules in this paper.

Note that the vector representation is not a highest weight representation.

The comultiplication rule defines an $\E$-module structure on the
tensor product of vector representations $V(u_1)\otimes\dots\otimes
V(u_n)$ for generic $q_1,q_2,u_1,\dots,u_n$. This is again an irreducible
representation of level $(1,1)$.

If these parameters are not generic, the comultiplication rule
may fail to define the action of series $e(z)$ and $f(z)$. In general,
some matrix coefficients are well-defined and some are not. Also, some
matrix coefficients which are generically non-zero
become zero for special values of parameters.

The study of this phenomena is reduced to the case of $n=2$, and
the following simple lemma about tensor products of two vector
representations of $\E$ describes the situation.

\begin{lem}\label{2 point}
In $V(u)\otimes V(v)$, all matrix coefficients of operators $\psi^\pm(z)$, $e(z)$ and $f(z)$ are well-defined except possibly  for
\begin{eqnarray*}
\bra{i}\otimes\bra{j}\ e(z)\ \ket{i}\otimes\ket{j-1},\qquad
\bra{i}\otimes\bra{j}\ f(z)\ \ket{i+1}\otimes\ket{j}.
\end{eqnarray*}
Each of these matrix coefficients is undefined if and only if
$$
v/u=q_1^{i-j} \quad {\rm or} \quad  v/u=q_1^{i-j+1},
$$
and is equal to zero if and only if
$$
v/u=q_1^{i-j}q_2^{-1} \quad {\rm or} \quad v/u=q_1^{i-j}q_3^{-1}.
$$
Moreover the matrix coefficients 
\begin{eqnarray*}
\bra{i+1}\otimes\bra{j}\ e(z)\ \ket{i}\otimes\ket{j},\qquad
\bra{i}\otimes\bra{j}\ f(z)\ \ket{i}\otimes\ket{j+1}
\end{eqnarray*}
are always non-zero.
\end{lem}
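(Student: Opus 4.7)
The plan is to expand $\triangle e(z)$, $\triangle f(z)$, and $\triangle\psi^\pm(z)$ on a basis vector $\ket{i}_u\otimes\ket{j}_v$ by plugging in the comultiplication rules \eqref{tre}--\eqref{trpsi} together with the explicit eigenvalues and matrix elements from Lemma \ref{V_u}, and then to read off each matrix coefficient and inspect when the formal product of a delta function with a rational function acquires a pole (ill-defined) or vanishes (zero).

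Since $\triangle\psi^\pm(z)=\psi^\pm(z)\otimes\psi^\pm(z)$ acts diagonally on the basis $\ket{i}\otimes\ket{j}$ with eigenvalue equal to the product of two rational scalars (expanded in $z^{\mp 1}$), its matrix coefficients are manifestly well-defined, so there is nothing to check for the $\psi^\pm$ operators. For $e(z)$, the first summand $e(z)\otimes 1$ shifts only the first factor, so it contributes only to $\bra{i+1}\otimes\bra{j}\, e(z)\,\ket{i}\otimes\ket{j}$, yielding $(1-q_1)^{-1}\delta(q_1^iu/z)$, which is always well-defined and nonzero; this handles one of the two ``always nonzero'' claims. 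Similarly, $1\otimes f(z)$ contributes only to $\bra{i}\otimes\bra{j}\,f(z)\,\ket{i}\otimes\ket{j+1}$, producing $(q_1^{-1}-1)^{-1}\delta(q_1^{j-1}v/z)$, handling the other.

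The two potentially singular coefficients arise from the ``mixed'' summands. The summand $\psi^-(z)\otimes e(z)$ contributes only to $\bra{i}\otimes\bra{j}\,e(z)\,\ket{i}\otimes\ket{j-1}$, giving a formal expression
\[
\frac{1}{1-q_1}\,\psi^-_{(i)}(z)\,\delta(q_1^{j-1}v/z),
\]
where $\psi^-_{(i)}(z)$ is the scalar eigenvalue of $\psi^-(z)$ on $\ket{i}_u$ taken from Lemma \ref{V_u}. The product $\phi(z)\delta(a/z)$ is well-defined, and equals $\phi(a)\delta(a/z)$, exactly when the rational function $\phi$ has no pole at $z=a$. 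From the explicit formula, $\psi^-_{(i)}(z)$ has poles at $z=q_1^{-i}u$ and $z=q_1^{-i+1}u$, and zeros at $z=q_1^{-i}q_2^{-1}u$ and $z=q_1^{-i}q_3^{-1}u$. Specializing to $z=q_1^{j-1}v$ and using $q_1q_2q_3=1$ rewrites the obstructions precisely as $v/u=q_1^{i-j}$ or $q_1^{i-j+1}$, and the zero locus as $v/u=q_1^{i-j}q_2^{-1}$ or $q_1^{i-j}q_3^{-1}$, as claimed.

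The $f$-case is entirely parallel: $f(z)\otimes\psi^+(z)$ gives $\bra{i}\otimes\bra{j}\,f(z)\,\ket{i+1}\otimes\ket{j}$ proportional to $\psi^+_{(j)}(z)\,\delta(q_1^iu/z)$, and specializing at $z=q_1^iu$ the poles and zeros of $\psi^+_{(j)}(z)$ produces the same two sets of exceptional ratios in $v/u$, proving the symmetry asserted in the statement. The only genuine obstacle is the careful bookkeeping of signs and exponents when rewriting the conditions $q_1^{j-1}v=q_1^{-i+\epsilon}u$ in the form $v/u=q_1^{i-j+\epsilon'}$ and applying $q_1q_2q_3=1$ to reconcile the two presentations of the zero set; everything else is a direct substitution.
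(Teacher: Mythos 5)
Your proposal is correct and is exactly the ``straightforward'' computation the paper leaves to the reader: expand the coproduct summands on $\ket{i}_u\otimes\ket{j}_v$, observe that $e(z)\otimes 1$ and $1\otimes f(z)$ give the two always-nonzero delta-function coefficients, and locate the poles and zeros of the $\psi^\mp$ eigenvalue multiplying the delta function in the mixed terms. One caution: your intermediate pole/zero locations for $\psi^-_{(i)}(z)$ have the exponent signs transposed --- from Lemma \ref{V_u} the poles are at $z=q_1^{i}u$ and $z=q_1^{i-1}u$ (not $q_1^{-i}u$, $q_1^{-i+1}u$) and the zeros at $z=q_1^{i}q_2u$ and $z=q_1^{i}q_3u$ --- and it is only with these corrected values that the substitution $z=q_1^{j-1}v$ together with $q_1q_2q_3=1$ yields the conditions $v/u=q_1^{i-j},q_1^{i-j+1}$ and $v/u=q_1^{i-j}q_2^{-1},q_1^{i-j}q_3^{-1}$ stated in the lemma.
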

\begin{proof}
The proof is straightforward.
\end{proof}

\subsection{Fock modules}
A {\it partition} $\la$ is the sequence $(\la_s)_{s\in\Z_{\geq 1}}$, 
such that $\la_s\in\Z$ and $\la_s\geq \la_{s+1}$ for all $s$. Let 
$\mc P$ be the set of all partitions. Let $\mc P^+\subset \mc P$ 
be the set of finite partitions:  $\la\in\mc P$ is in $\mc P^+$
if and only if only finitely many $\la_s$ 
are non-zero. 

For $s\in\Z_{\geq 1}$, denote $\bs 1_s=(\delta_{sm})_{m\in\Z_{\geq
    1}}$. For example, we have $\la+\bs
  1_s=(\la_1,\dots,\la_{s-1},\la_s+1,\la_{s+1},\dots)$.

For $u\in\C$, let  $\F(u)$ be a complex vector space 
spanned by  $\ket{\la}_u$, where $\la\in \mc P^+$. 

We use the notation  ${}_u\bra{\la}$ with $\la\in \mc P^+$ for the dual basis.
When it is clear we omit the subscript $u$ and write
simply $\bra{\la}$ and $\ket{\la}$.

For a partition $\la$ we denote by $\bra{\la}\psi^\pm(z)_i\ket{\la}$
the eigenvalue of the series $\psi^\pm(z)$ on the vector
$\ket{\la_i-i+1}_{uq_2^{-i+1}}\in V(uq_2^{-i+1})$, i.e.
$$
\psi^\pm(z)\ket{\la_i-i+1}_{uq_2^{-i+1}}=
\bra{\la}\psi^\pm(z)_i\ket{\la}\ket{\la_i-i+1}_{uq_2^{-i+1}}.
$$
The subscript $i$ in $\psi^\pm(z)_i$ indicates the $i$-th component $\la_i$ of the partition $\la$,
and at the same time the shifts
\begin{align*}
\la_i\mapsto\la_i-i+1,\quad
u\mapsto uq_2^{-i+1}
\end{align*}
in the vector $\ket{\la_i-i+1}_{uq_2^{-i+1}}$.

Similarly, we introduce the matrix coefficients $\bra{\la + {\bf
    1}_i}e(z)_i\ket{\la}$ and $\bra{\la}f(z)_i\ket{\la+{\bf 1}_i}$

\begin{align*}
\bra{\la + {\bf 1}_i}e(z)_i\ket{\la}=
\frac{\delta(q_1^{\la_i}q_3^{i-1}u/z)}{1-q_1},\qquad
\bra{\la}f(z)_i\ket{\la + {\bf 1}_i}=
\frac{q_1\delta(q_1^{\la_i}q_3^{i-1}u/z)}{1-q_1}.
\end{align*}
In what follows we multiply these delta functions by rational
functions $\bra{\la}\psi^\pm(z)_j\ket{\la}$.  We mean that they are
multiplied by the values of the rational functions at the support of
the delta function: $F(z)\delta(v/z)=F(v)\delta(v/z)$.

Let $\emptyset$ be the empty partition, $\emptyset_i=0$, $i\in\Z_{\geq 1}$.  
Introduce the series $\psi^{\pm}_{\emptyset}(z)\in\C[[z^{\pm 1}]]$ by
$$
\psi^+_{\emptyset}(z)=\frac{1-q_2z}{1-z},\qquad \psi^-_{\emptyset}(z)=
q_2\frac{1-q_2^{-1}z^{-1}}{1-z^{-1}}\ .
$$

Define the action of $e(z)$ on $\F(u)$ by 
\bea\label{e}
\bra{\la+{\bs 1}_i}e(z)\ket{\la} = \bra{\la + \bs 1_i}e(z)_i\ket{\la}
\ \prod_{j=1}^{i-1} {\bra{\la}\psi^+(z)_j\ket{\la}}
\ena
and setting all other matrix coefficients to be zero.
Define the action of $f(z)$ on  $\F(u)$ by 
\bea\label{f}
\bra{\la}f(z)\ket{\la +{\bf 1}_i} = 
\bra{\la}f(z)_i\ket{\la+\bs 1_i}\ \psi^+_{\emptyset}(uq_3^{i}/z)\
\prod_{j=i+1}^\infty \frac{\bra{\la}\psi^+(z)_j\ket{\la}}
{\bra{\emptyset}\psi^+(z)_j\ket{\emptyset}}
\ena
and setting all other matrix coefficients to be zero.
Define the action of $\psi^\pm(z)$ on  $\F(u)$ by
\bea\label{psi}
\bra{\la}\psi^\pm(z)\ket{\la}=\psi^\pm_{\emptyset}(u/z)\ \prod_{j=1}^\infty
\frac{\bra{\la}\psi^\pm(z)_j\ket{\la}}
{\bra{\emptyset}\psi^\pm(z)_j\ket{\emptyset}}
\ena
and setting all other matrix coefficients to be zero.

Note that although the formulas are written using 
infinite products, each product
in fact is finite since $\la_j=0=\emptyset_j$
 for all but finitely many indices $j$.

For example, explicitly we have
\begin{gather}\label{psi expl}
\psi^+(z)\ \ket{\la}
=\psi_{\la}(u/z)\ \ket{\la},\quad
\psi_{\la}(u/z)=
\frac{1-q_1^{\la_1-1}q_3^{-1}u/z}{1-q_1^{\la_1}u/z}
\prod_{i=1}^\infty
\frac{(1-q_1^{\la_i}q_3^iu/z)(1-q_1^{\la_{i+1}-1}q_3^{i-1}u/z)}{(1-q_1^{\la_{i+1}}q_3^iu/z)(1-q_1^{\la_i-1}q_3^{i-1}u/z)}.
\end{gather}

\begin{lem}
Formulas \Ref{e}, \Ref{f}, \Ref{psi} define a structure of an
irreducible tame $\E$-module on $\F(u)$ of level $(1,q_2)$.  It is 
a highest weight module 
with highest vector $\ket{\emptyset}$ and highest weight 
$\psi^\pm_{\emptyset}(u/z)$.
\end{lem}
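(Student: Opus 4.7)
The plan is to interpret the formulas as a renormalized semi-infinite tensor product of vector representations, and then read off the level, highest weight, tameness, and irreducibility in turn.

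First, I would identify $\ket{\la}$ with the formal semi-infinite wedge whose $j$-th factor is $\ket{\la_j-j+1}_{uq_2^{-j+1}}\in V(uq_2^{-j+1})$, which stabilizes to the vacuum tail $\ket{-j+1}_{uq_2^{-j+1}}$ for $j$ past the length of $\la$. Under this identification the ``infinite'' products in \Ref{e}--\Ref{psi} truncate to finite ones, since division by $\bra{\emptyset}\psi^\pm(z)_j\ket{\emptyset}$ kills the contributions from the stable tail, while the scalar $\psi^\pm_\emptyset(u/z)$ arises as the telescoped eigenvalue of $\psi^\pm(z)$ on the vacuum wedge under the coproduct \Ref{trpsi}. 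The verification of the relations \Ref{rel1}--\Ref{rel6} on $\F(u)$ is thereby reduced to the corresponding check on the finite truncations $V(u)\T\cdots\T V(uq_2^{-N+1})$, which follows from Lemma \ref{V_u} and the coproduct \Ref{tre}--\Ref{trpsi} as in \cite{FFJMM}.

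Next I would read off the level and highest weight. Expanding $\psi^+_\emptyset(u/z)$ at $z=\infty$ and $\psi^-_\emptyset(u/z)$ at $z=0$ yields $\psi^+_0=1$ and $\psi^-_0=q_2$, so $\F(u)$ has level $(1,q_2)$. Formula \Ref{f} produces $\ket{\la}$ only from $\ket{\la+\bs 1_i}$, and no $\la\in\mc P^+$ satisfies $\la+\bs 1_i=\emptyset$, so $f(z)\ket{\emptyset}=0$. The ratios in \Ref{psi} all collapse to $1$ on $\ket{\emptyset}$, giving $\psi^\pm(z)\ket{\emptyset}=\psi^\pm_\emptyset(u/z)\ket{\emptyset}$, so $\ket{\emptyset}$ is the highest-weight vector with the stated weight, and it generates $\F(u)$ once irreducibility is known.

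Tameness follows from \Ref{psi expl}: the poles and zeros of $\psi_\la(u/z)$ in $z$ sit at the points $q_1^{\la_i}q_3^iu$ and $q_1^{\la_i-1}q_3^{i-1}u$, which are the contents of the addable and removable corners of the Young diagram of $\la$; under the genericness of $q_1,q_3$ these multisets distinguish distinct partitions, yielding a simple joint spectrum. Irreducibility comes from the fact that $e(z)$ and $f(z)$ can move between $\ket{\la}$ and each $\ket{\la\pm\bs 1_i}$ whenever the latter lies in $\mc P^+$: the delta factors in \Ref{e}, \Ref{f} localize $z$ at the appropriate corner content, and the remaining finite products specialize to non-zero values by Lemma \ref{2 point} combined with the genericness of $q_1,q_2,q_3$. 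The main obstacle is justifying the regularization that produces $\psi^\pm_\emptyset(u/z)$ from the divergent infinite tail product and showing that the resulting finite formulas genuinely satisfy the $\E$-relations; once this is settled, every remaining step reduces to an explicit finite calculation in the matrix coefficients.
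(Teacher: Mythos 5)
Your proposal is correct and follows essentially the same route as the paper: the module structure is obtained from the semi-infinite tensor product of vector representations of \cite{FFJMM} (which the paper simply cites), the level and highest-weight statements are read off directly from $\psi^\pm_\emptyset$ and formulas \Ref{f}, \Ref{psi}, and tameness and irreducibility are established exactly as in the paper's Theorem \ref{tame thm 1} -- via the pole/zero structure of $\psi_\la(u/z)$ determining $\la$ and the non-vanishing of the one-step matrix coefficients $\bra{\la\pm\bs 1_i}e(z)\ket{\la}$, $\bra{\la}f(z)\ket{\la\pm\bs 1_i}$. Your write-up is in fact more detailed than the paper's own two-line proof.
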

\begin{proof}
These formulas define an $\E$-module by \cite{FFJMM}, cf. also \cite{FT}.
It is easy to check that it is tame and irreducible, see also Theorem \ref{tame thm 1} below. The highest weight conditions are obvious.
\end{proof}

We call $\F(u)$ the Fock module. It is an analog of a Fock module 
for $\gl_\infty$. This module appeared in \cite{FT} from 
geometric considerations.

\subsection{The module $G_{\bs a}^{k,r}$}
Assume $q_1^{1-r}q_3^{k+1}=1$, where $r,k+1\in\Z_{\geq 2}$.
More precisely, we mean that $q_1^xq_3^y=1$ if and only if
$x=(1-r)\kappa$, $y=(k+1)\kappa$ for some $\kappa\in\Z$.

Fix a sequence of non-negative integers $\bs a=(a_1,\dots,a_k)$ satisfying
$\sum_{i=1}^ka_i=r$,
and set $c_j=\sum_{i=1}^ja_i$.
Define the vacuum partition $\La^0\in \mc P$ by
\begin{align*}
\La^0_{\nu k+i+1}=-\nu r-c_i,\ {\rm where}\  
\nu\in\Z_{\geq 0},\ i=0,\dots, k-1.
\end{align*}
We define the sets of $(k,r)$-admissible partitions 
\begin{align*}
& \P_{\bs a}^{k,r}=
\{\La\in\mathcal P|\ \La_j-\La_{j+k}\ge r,\ {\rm for}\ j\in\Z_{\geq 1};
\quad\La_i=\La_i^0\ 
\hbox{\rm for all sufficiently large $i$}\}.
\end{align*}
We recall the semi-infinite construction of $\E$-module given in \cite{FFJMM}.
Let $W_{\bs a}^{k,r}(u)$ be the space spanned by  
$\ket\La$, where $\La\in \P^{k,r}_{\bs a}$.

\begin{rem}
Note that our notation is different from the one used in \cite{FFJMM}.
The module $W_{\bs c}^{k,r}(u)$ in \cite{FFJMM} is denoted by  
$W_{\bs a}^{k,r}(u)$ here, where $c_j=\sum_{i=1}^ja_i$. 
The formulas for the action are shorter in terms of $c_i$, on the other hand 
$a_i$ are natural parameters for many related objects appearing in this paper.
\end{rem}

Define series $\varphi^\pm_{\emptyset}(z)\in\C[[z^{\pm 1}]]$
by
$$
\varphi^+_{\emptyset}(z)=\frac{1-q_3z}{1-z},\qquad 
\varphi^-_{\emptyset}(z)=q_3\frac{1-q_3^{-1}z^{-1}}{1-z^{-1}}\ .
$$

Define the action of $e(z)$ on  $W_{\bs a}^{k,r}(u)$ by 
\bea\label{e fat}
\bra{\La+{\bf 1}_i}e(z)\ket{\La} = \bra{\La + \bs 1_i}e(z)_i
\ket{\La}\ \prod_{j=1}^{i-1} \bra{\La}\psi^-(z)_j\ket{\La}
\ena
and setting all other matrix coefficients to be zero.
Define the action of $f(z)$ on  $W_{\bs a}^{k,r}(u)$ by 
\begin{align}\label{f fat}
\bra{\La}f(z)\ket{\La +{\bf 1}_i} &\\
 =\bra{\La}& f(z)_i\ket{\La+\bs 1_i}
\prod_{j=i+1}^\infty \frac{\bra{\La}\psi^+(z)_j\ket{\La}}
{\bra{\La^0}\psi^+(z)_j\ket{\La^0}}
\prod_{j=0}^{k-1} \varphi^+_{\emptyset}
(uq_1^{\Lambda^0_{i+j+1}}q_3^{i+j}/z)) \notag
\end{align}
and setting all other matrix coefficients to be zero.
Define the action of $\psi^\pm(z)$ on $W_{\bs a}^{k,r}(u)$ by
\bea\label{psi fat}
\bra{\La}\psi^\pm(z)\ket{\La}=
\prod_{i=0}^{k-1} \varphi^\pm_{\emptyset}(uq_1^{-a_i}q_3^{i}/z))
\ \prod_{i\ge 1}\frac{\bra{\La}\psi^\pm(z)_i\ket{\La}}
{\bra{\Lambda^0}\psi^\pm(z)_i\ket{\Lambda^0}}
\ena
and setting all other matrix coefficients to be zero.

\begin{lem}\label{fat lem}
Suppose $q_1^{1-r}q_3^{k+1}=1$ with $r,k+1\in\Z_{\geq 2}$. Then
formulas \Ref{e fat}, \Ref{f fat}, \Ref{psi fat} define a structure of an
irreducible tame $\E$-module on $W_{\bs a}^{k,r}(u)$ 
of level $(1,q_3^k)$. 
It is a highest weight module 
with highest vector $\ket{\La^0}$ and highest weight 
$\prod_{i=0}^{k-1} \varphi^\pm_\emptyset(uq_1^{-a_i}q_3^{i}/z)$.
\end{lem}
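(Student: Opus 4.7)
The plan is to parallel the argument already used for the Fock module: invoke \cite{FFJMM} for the fact that \eqref{e fat}--\eqref{psi fat} satisfy the defining relations of $\E$ exactly when the resonance $q_1^{1-r}q_3^{k+1}=1$ holds, and then verify by hand the remaining assertions (well-definedness, highest weight, level, tameness, and irreducibility).

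For well-definedness, note that for each $\La\in\P^{k,r}_{\bs a}$ one has $\La_i=\La^0_i$ for all sufficiently large $i$, so the ratios $\bra{\La}\psi^\pm(z)_j\ket{\La}/\bra{\La^0}\psi^\pm(z)_j\ket{\La^0}$ equal $1$ for $j\gg 1$. Hence the ostensibly infinite products in \eqref{f fat} and \eqref{psi fat} are genuinely finite, and the formulas unambiguously define matrix elements on $W_{\bs a}^{k,r}(u)$. Setting $\La=\La^0$ in \eqref{psi fat} kills every ratio, giving directly $\psi^\pm(z)\ket{\La^0}=\prod_{i=0}^{k-1}\varphi^\pm_\emptyset(uq_1^{-a_i}q_3^i/z)\ket{\La^0}$; extracting $\psi^+_0$ by $z\to\infty$ and $\psi^-_0$ by $z\to 0$ from the explicit form of $\varphi^\pm_\emptyset$ then yields level $(1,q_3^k)$. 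For the highest weight condition $f(z)\ket{\La^0}=0$, observe that $\La^0_i-\La^0_{i+k}=r$ for every $i$, so subtracting $1$ from the $i$-th part of $\La^0$ immediately violates the admissibility inequality $\La_i-\La_{i+k}\geq r$; thus $\La^0-\bs 1_i\notin\P^{k,r}_{\bs a}$, and the corresponding matrix element of $f(z)$ is zero by the convention of \eqref{f fat}.

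For tameness I would show that the rational eigenvalue $\bra{\La}\psi^+(z)\ket{\La}$ determines $\La\in\P^{k,r}_{\bs a}$ uniquely: the finite product in \eqref{psi fat} produces a specific multiset of poles and zeros in $z$, and no spurious cancellations can occur because the only multiplicative relation among $q_1,q_3$ is $q_1^{1-r}q_3^{k+1}=1$, which is compatible with the admissibility of $\La$. Granted tameness, irreducibility follows by the standard box-adding/removing argument: using the explicit non-vanishing of matrix elements of $e(z)$ and $f(z)$ between neighboring admissible partitions, which reduces via Lemma \ref{2 point} to a two-point calculation, one shows that any $\ket{\La}$ is connected to $\ket{\La^0}$ by a sequence of $e_i,f_i$.

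The main obstacle is the verification of the defining relations of $\E$ itself, but this is precisely what the semi-infinite construction of \cite{FFJMM} provides, so I would quote it rather than redo it. The remaining delicate point is to confirm that the resonance factors $\varphi^\pm_\emptyset(uq_1^{\La^0_{i+j+1}}q_3^{i+j}/z)$ appearing in \eqref{f fat} and \eqref{psi fat} kill exactly those matrix elements that would otherwise take the action out of $\P^{k,r}_{\bs a}$; this is a bookkeeping exercise applying Lemma \ref{2 point} termwise and presents no new conceptual difficulty.
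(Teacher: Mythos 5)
Your proposal follows essentially the same route as the paper: the defining relations of $\E$ are quoted from \cite{FFJMM}, the highest weight and level statements are read off directly from \eqref{f fat} and \eqref{psi fat} at $\La=\La^0$ (using $\La^0_j-\La^0_{j+k}=r$), and tameness plus non-vanishing of neighbouring matrix elements give irreducibility — precisely the checks the paper declares ``easy'' (and alternatively derives from Theorem \ref{iso thm}). Your sketches of these checks are correct in substance, so no gap relative to the paper's own standard of proof.
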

\begin{proof} These formulas define an $\E$-module by \cite{FFJMM}. 
  It is easy to check that it is tame and irreducible. (It also
  follows from Theorem \ref{iso thm} of this paper.) The highest
  weight conditions are obvious.
\end{proof}

Let $q_1^{p'}q_3^p=1$ and $p=k+1$, $p'=k+r$, where as above
$r,k+1\in\Z_{\geq 2}$. This is equivalent to $q_1^{1-r}q_2^{k+1}=1$.
Again, by that we mean that $q_1^xq_3^y=1$ is and only if
$x=(k+r)\kappa$, $y=(k+1)\kappa$ for some $\kappa\in\Z$.
  
  Let $G_{\bs a}^{k,r}$ be the space spanned by $\ket\la$, where
  $\la\in \P^{k,r}_{\bs a}$. Define the action of operators
  $e(z),f(z), \psi^\pm(z)$ on $G_{\bs a}^{k,r}$ by formulas \Ref{e
    fat}, \Ref{f fat}, \Ref{psi fat}, where $q_2$ is replaced with
  $q_3$, and $q_3$ is replaced with $q_2$.

\begin{lem}\label{G lem}
Suppose  $q_1^{p'}q_3^p=1$ and $p=k+1$, $p'=k+r$, where $k+1,r\in\Z_{\geq 2}$. 
Then formulas \Ref{e fat}, \Ref{f fat}, \Ref{psi fat},
where $q_2$ is replaced with $q_3$, and $q_3$ is replaced with $q_2$,
define a structure of an
irreducible tame $\E$-module on $G_{\bs a}^{k,r}$
of level $(1,q_2^k)$. 
It is a highest weight module 
with highest vector $\ket{\La^0}$ and highest weight 
$\prod_{i=0}^{k-1} \psi^\pm_\emptyset(uq_1^{-a_i}q_2^{i}/z))$.
\end{lem}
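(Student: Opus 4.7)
The plan is to deduce this lemma from Lemma \ref{fat lem} by invoking the $q_2\leftrightarrow q_3$ symmetry of the algebra $\E$. Because the structure function $g(z,w)=(z-q_1w)(z-q_2w)(z-q_3w)$ is symmetric in its three parameters and the constraint $q_1q_2q_3=1$ is likewise symmetric, the algebra $\E$ depends only on the unordered set $\{q_1,q_2,q_3\}$. Consequently, the transposition exchanging $q_2$ and $q_3$ (and fixing $q_1$) is an automorphism of $\E$, and pulling back any $\E$-module along it produces another $\E$-module with the roles of $q_2$ and $q_3$ interchanged.

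First I would verify that the hypotheses match. Using $q_1q_2q_3=1$, the condition $q_1^{p'}q_3^p=1$ with $p=k+1$, $p'=k+r$ is equivalent to $q_1^{1-r}q_2^{k+1}=1$, which is precisely the condition $q_1^{1-r}q_3^{k+1}=1$ of Lemma \ref{fat lem} after the swap. Next I would check that the $q_2\leftrightarrow q_3$ exchange carries the formulas \Ref{e fat}, \Ref{f fat}, \Ref{psi fat} defining the action on $W_{\bs a}^{k,r}(u)$ to the formulas stated for $G_{\bs a}^{k,r}$. Under the swap the series $\varphi^\pm_\emptyset$ turn into $\psi^\pm_\emptyset$ (compare their definitions), and the internal matrix coefficients $\bra{\La}\psi^\pm(z)_j\ket{\La}$, which are inherited from the vector representation through the shifts $u\mapsto uq_2^{-j+1}$, are replaced by their $q_3$-shifted counterparts. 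Thus the swapped formulas are nothing but the pullback of the $W_{\bs a}^{k,r}(u)$-action.

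With this identification in hand, every assertion transfers. Well-definedness of the action on $G_{\bs a}^{k,r}$ follows because the swap is an algebra automorphism; tameness and irreducibility are intrinsic properties preserved by automorphisms; the highest vector remains $\ket{\La^0}$, and the highest weight transforms from $\prod_{i=0}^{k-1}\varphi^\pm_\emptyset(uq_1^{-a_i}q_3^i/z)$ into $\prod_{i=0}^{k-1}\psi^\pm_\emptyset(uq_1^{-a_i}q_2^i/z)$, as stated. The level changes from $(1,q_3^k)$ to $(1,q_2^k)$ because the value of $\psi_0^-$ is computed as the constant term at $z=0$ in the highest weight series, and each factor contributes $q_3$ before the swap and $q_2$ after.

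I do not expect a substantive obstacle here; the argument is essentially the symmetry observation. The only step requiring care is to confirm that the transposition is applied consistently everywhere $q_2$ or $q_3$ enters, including implicitly through the vector representation, the Fock module, and the auxiliary series $\varphi^\pm_\emptyset$, so that the pulled-back formulas agree literally with the prescription in the statement of Lemma \ref{G lem}. This is a mechanical verification.
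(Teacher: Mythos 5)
Your proposal is correct and is exactly the paper's argument: the paper proves Lemma \ref{G lem} in one line by citing Lemma \ref{fat lem} together with the symmetry of $\E$ under permutations of $q_1,q_2,q_3$, which is precisely the $q_2\leftrightarrow q_3$ pullback you describe. Your additional checks (equivalence of the resonance conditions, $\varphi^\pm_\emptyset\mapsto\psi^\pm_\emptyset$, the level) are the mechanical verifications the paper leaves implicit.
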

\begin{proof}
The lemma follows from Lemma \ref{fat lem} and the symmetry of the algebra $\E$
with respect to permutations of parameters $q_1,q_2,q_3$.
\end{proof}


\section{Construction of $\E$-modules}\label{mod sec}
In this section we construct $\E$-modules $\M_{\bs a,\bs b}^{p',p}(u)$
as subquotients of tensor products of Fock modules.

\subsection{Generic tensor products}
Consider a tensor product of $n$ Fock modules
$\F(u_1)\otimes\dots \otimes\F(u_n)$ with $n\geq 2$. 
In this section we assume that $q_1,q_2,u_1,\dots,u_n$ are generic.

A basis of $\F(u_1)\otimes\dots \otimes\F(u_n)$ is given by
$\ket{\la^{(1)}}_{u_1}\otimes\dots\otimes\ket{\la^{(n)}}_{u_n}$, where
$\la^{(i)}\in\mc P$ for $i=1,\dots,n$.

We use the following notation. We write
basic vectors in $\F(u_i)$ with upper index $i$ and skip the index
$u_i$: we write simply $\ket{\la^{(i)}}$ instead of
$\ket{\la^{(i)}}_{u_i}$.  Moreover, we use the notation
$\ket{\la^{(1)},\dots,\la^{(n)}}$ and $\bra{\la^{(1)},\dots,\la^{(n)}}$ for
$\ket{\la^{(1)}}\otimes\dots\otimes\ket{\la^{(n)}}$ and
$\bra{\la^{(1)}}\otimes\dots\otimes\bra{\la^{(n)}}$.
Sometimes we use the bold font notation 
$\bs \la=(\la^{(1)},\dots,\la^{(n)})$ and then we denote $\bs 1_s$ in the $i$-th place by $\bs 1^{(i)}_s$:  
$$
\bs\la+\bs 1_s^{(i)}=(\la^{(1)},\dots,\la^{(i)}+\bs 1_s,\dots, \la^{(n)}).
$$

Define a $\Z$ grading on $\F(u_1)\otimes\dots\otimes\F(u_n)$ 
by (cf. \Ref{grading}):
$$
\deg( \ket{\la^{(1)}}\otimes\dots\otimes\ket{\la^{(n)}})=
\sum_{i=1}^n|\la^{(i)}|=\sum_{i=1}^n\sum_{s=1}^\infty \la^{(i)}_s.
$$

\begin{lem} Assume that $q_1,q_2,u_1,\dots,u_n$ are generic.
The comultiplication rule defines on 
$\F(u_1)\otimes\dots \otimes\F(u_n)$ a structure 
of an irreducible graded tame $\E$-module of level $(1,q_2^n)$.

The module $\F(u_1)\otimes\dots \otimes\F(u_n)$
is a highest weight module 
with highest vector 
$\ket{\emptyset^{(1)}}\otimes\dots\otimes\ket{\emptyset^{(n)}}$ 
and highest weight $\prod_{i=1}^n\psi^\pm_{\emptyset}(u_i/z)$.
\end{lem}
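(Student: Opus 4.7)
The plan is to verify the five claims—well-definedness of the $\E$-action, the level, the highest weight property, tameness, and irreducibility—in that order, leveraging the corresponding facts for a single Fock module.

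First I would show that the iterated comultiplication rule defines matrix coefficients without poles on every basis vector. The iterated formulas read $\Delta^{(n-1)}e(z)=\sum_{i=1}^{n}\psi^-(z)^{\otimes(i-1)}\otimes e(z)\otimes 1^{\otimes(n-i)}$, with an analogous expression for $f(z)$, and $\Delta^{(n-1)}\psi^\pm(z)=\psi^\pm(z)^{\otimes n}$. On a basis vector $\ket{\bs\la}$ the sum collapses to a finite collection of terms indexed by addable/removable boxes of the $\la^{(i)}$'s, each coefficient being a finite product because $\la^{(i)}_s=0$ for $s\gg 1$. The genericity of $q_1,q_2,u_1,\dots,u_n$ rules out, via Lemma \ref{2 point} applied factor by factor, both undefined matrix coefficients (no cross-factor ratio $u_j/u_i$ lands at $q_1^{i-j}$ or $q_1^{i-j+1}$) and spurious cancellations. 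That the resulting action satisfies the defining relations of $\E$ then follows as in \cite{FFJMM}, since the comultiplication is compatible with the relations.

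The level $(1,q_2^n)$ is immediate from $\Delta\psi_0^\pm=\psi_0^\pm\otimes\psi_0^\pm$ and each Fock module having level $(1,q_2)$. The highest weight claim follows because $\Delta^{(n-1)}f(z)$ annihilates $\ket{\emptyset^{(1)},\dots,\emptyset^{(n)}}$ term by term, while its $\psi^\pm(z)$-eigenvalue is the product $\prod_{i=1}^n\psi^\pm_\emptyset(u_i/z)$. The grading claim is the observation that the tensor-product grading matches $\deg e_i=1$, $\deg f_i=-1$, $\deg\psi^\pm_i=0$ and that the comultiplication is homogeneous. For tameness, one reads off from \Ref{psi expl} and the coproduct that $\ket{\bs\la}$ is a joint eigenvector of $\psi^\pm(z)$ with eigenvalue $\prod_{i=1}^n\psi_{\la^{(i)}}(u_i/z)$. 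Two distinct $n$-tuples $\bs\la\neq\bs\mu$ yield distinct such rational functions: the poles and zeros cluster near $z=u_iq_1^aq_3^b$, and the genericity of the $u_i$ makes these clusters disjoint for different $i$, so the set of poles/zeros in each cluster recovers $\la^{(i)}$ individually (since on a single factor $\psi_\la$ determines $\la$).

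Irreducibility is the main obstacle. By tameness, any submodule is a direct sum of joint $\psi^\pm$-eigenlines, hence spanned by a subset $S\subset\{\ket{\bs\la}\}$. The plan is to show that $S$ is closed under adding and removing boxes, and that all matrix coefficients appearing in this process are nonzero. Given a nonzero $\ket{\bs\la}\in S$, the expansion of $f(z)\ket{\bs\la}$ via $\Delta^{(n-1)}f(z)$ produces a delta-function supported term proportional to $\ket{\bs\la-\bs1^{(i)}_s}$ for each removable box $(s,i)$; the coefficient is a product of $\psi^+(z)_j$-factors from the $i$-th Fock factor times cross-factor $\psi^+(z)$-values evaluated at $z=q_1^{\la^{(i)}_s-1}q_3^{s-1}u_i$. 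Genericity of the $u_j/u_i$ (as a monomial in $q_1,q_2$) ensures none of these cross-factor values is zero, and tameness of each $\F(u_j)$ already rules out vanishing within a single factor. Iterating box removals, $S$ contains $\ket{\emptyset^{(1)},\dots,\emptyset^{(n)}}$; then iterating box additions via $e(z)$, with the same genericity argument, $S$ contains every $\ket{\bs\la}$. The delicate point is precisely this no-vanishing check for the cross-factor $\psi^\pm$-coefficients, which is where the genericity hypothesis is fully used.
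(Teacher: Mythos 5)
Your proposal is correct and follows essentially the same route as the paper: well-definedness and the non-vanishing of the one-box matrix coefficients are reduced to the two-factor situation of Lemma \ref{2 point} together with genericity, tameness comes from the fact that genericity separates the $q_1,q_3$-lattices attached to the different $u_i$ so each $\psi_{\la^{(i)}}(u_i/z)$ is recovered from the product, and irreducibility then follows from tameness plus connectivity under single box moves. The only cosmetic difference is that the paper organizes the tameness check as an induction on the parts $\la^{(i)}_1,\la^{(i)}_2,\dots$ (cancel the leading poles, shift $u_i\mapsto u_i q_3^{-1}$, repeat) rather than your ``disjoint clusters of poles and zeros'' phrasing, but the underlying use of genericity is identical.
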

\begin{proof}
To check that the comultiplication rule gives well-defined formulas, 
it is sufficient to consider the case $n=2$.  
In the case $n=2$, the well-definedness is obvious due to Lemma \ref{2 point}.

These formulas give a well-defined action of $\E$, that is that the
relations in $\E$ are respected. Indeed, the check is reduced to the
case of a tensor product of vector representation, which it is done in
Lemma 2.5 in \cite{FFJMM}.

Let us prove the simplicity of the spectrum of $\psi(z)$. 
Recall the eigenvalue $\psi_\la(u/z)$ of $\psi^+(z)$ 
on $\ket{\la}$ in $\F(u)$, see \Ref{psi expl}. Assume that 
\be
\prod_{i=1}^n \psi_{\la^{(i)}}(u_i/z)=\prod_{i=1}^n \psi_{\mu^{(i)}}(u_i/z).
\en
We need to show that this implies $\la^{(i)}=\mu^{(i)}$ for $i=1,\dots,n$.

Note that $\psi_{\la^{(i)}}(u_i/z)$ has a pole at $z=q_1^{\la^{(i)}_1}u_i$. 
Since $q_1,q_2,u_1,\dots,u_n$ 
are generic this pole can be canceled only by the poles
$z=q_1^{\mu^{(i)}_1}u_i$ or $z=q_1^{\mu^{(i)}_1-1}u_i$. 
The latter is impossible because in such a case $\la^{(i)}_1=\mu^{(i)}_1-1$, 
and
the pole  $z=q_1^{\mu^{(i)}_1}u_i$ is not canceled. Therefore we obtain
 $\la^{(i)}_1=\mu^{(i)}_1$ for $i=1,\dots, n$. Cancel the terms with 
$\la^{(i)}_1$ and $\mu^{(i)}_1$, and replace $u_i$  with $u_i/q_3$. 
Then the same argument gives  $\la^{(i)}_2=\mu^{(i)}_2$ for $i=1,\dots, n$. 
Repeating the process we obtain that $\la^{(i)}=\mu^{(i)}$ for $i=1,\dots, n$.

Since the representation is tame, to show it is irreducible, it is sufficient
to check that the matrix coefficients
$\bra{\bs\la+\bs 1_s^{(i)}}e(z)\ket{\bs \la}$ and
$\bra{\bs\la}f(z)\ket{\bs\la+\bs 1_s^{(i)}}$ are non-zero for all $i,s$. 
This is reduced to $n=2$ case where it follows from Lemma \ref{2 point}.
\end{proof}

\subsection{Resonance in $u_i/u_{i+1},q_1,q_3$}
We turn to special values of parameters. In doing so we always keep in mind
that the matrix coefficients of the considered modules 
are rational functions of
parameters, sometimes multiplied by the delta functions. When we go to
special values of parameters we just take limits of these rational
functions. In particular, we first cancel factors at generic values of
parameters as much as possible and then simply substitute the special
values.

Consider the tensor product of $n$ Fock modules
$\F(u_1)\otimes\dots \otimes\F(u_n)$ with $n\geq 2$ and let 
\bea\label{shift n}
u_i=u_{i+1}\ q_1^{a_i+1}q_3^{b_i+1}, \ {\rm where}\ a_i,b_i\in\Z_{\geq 0}\ {\rm and}\ i=1,\dots,n-1.
\ena
Let $\bs a=(a_1,\dots,a_{n-1})$, $\bs b=(b_1,\dots,b_{n-1})$, $u_1=u$ and let
\be
\M_{\bs a,\bs b}(u)=\operatorname{span}\{ \ket{\la^{(1)},\dots,\la^{(n)}}\ |\ \la^{(i)}_s\geq \la^{(i+1)}_{s+b_i}-a_i,\ {\rm where}\ s\in\Z_{\geq 1},\ i=1,\dots,n-1\} .
\en
Note that if $a_i$ was negative for some $i$ then the space $\M_{\bs a,\bs b}$ 
would be trivial.

The following lemma shows that the definition of $\M_{\bs a,\bs b}(u)$ 
is in fact a superposition of $n=2$ conditions. Note that for $1\leq i<j\leq n$ we have
$$
u_i=u_jq_1^{a_{ij}+1}q_3^{b_{ij}+1}, \qquad
a_{ij}=\sum_{l=i}^{j-1} (a_l+1)-1,\quad b_{ij}=\sum_{l=i}^{j-1} (b_l+1)-1. 
$$

\begin{lem}\label{2n submodule} We have
$\ket{\la^{(1)},\dots,\la^{(n)}}\in
\M_{\bs a,\bs b}(u)$ if and only if for all $i,j$, $1\leq i<j\leq n$, 
$\ket{\la^{(i)}}\otimes\ket{\la^{(j)}}\in \M_{a_{ij},b_{ij}}(u_i)$.
\end{lem}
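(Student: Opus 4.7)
The plan is to prove the two implications separately, with the backward implication essentially tautological and only the forward direction requiring an induction.

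For the ``if'' direction, I simply specialize the pairwise hypothesis to $j=i+1$. Since $a_{i,i+1}=(a_i+1)-1=a_i$ and $b_{i,i+1}=(b_i+1)-1=b_i$, the condition $\la^{(i)}_s\geq\la^{(i+1)}_{s+b_{i,i+1}}-a_{i,i+1}$ is precisely the defining condition of $\M_{\bs a,\bs b}(u)$. So the pairwise conditions immediately yield membership in $\M_{\bs a,\bs b}(u)$.

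For the ``only if'' direction, I proceed by induction on the gap $d=j-i\geq 1$. The base case $d=1$ is again tautological. For the inductive step, suppose the pairwise inequality $\la^{(i)}_s\geq \la^{(j)}_{s+b_{ij}}-a_{ij}$ has been established for all $s$, and apply the consecutive condition $\la^{(j)}_{t}\geq \la^{(j+1)}_{t+b_j}-a_j$ with $t=s+b_{ij}$ to chain them:
\[
\la^{(i)}_s\ \geq\ \la^{(j+1)}_{s+b_{ij}+b_j}-a_{ij}-a_j.
\]
The target inequality, using $b_{i,j+1}=b_{ij}+b_j+1$ and $a_{i,j+1}=a_{ij}+a_j+1$, reads $\la^{(i)}_s\geq \la^{(j+1)}_{s+b_{ij}+b_j+1}-a_{ij}-a_j-1$.

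To bridge the off-by-one shift in the index, I invoke the fact that $\la^{(j+1)}$ is a (weakly decreasing) partition, so $\la^{(j+1)}_{s+b_{ij}+b_j}\geq \la^{(j+1)}_{s+b_{ij}+b_j+1}$, while subtracting an extra $1$ from the right-hand side only weakens the estimate. Combining these observations with the chained inequality closes the induction. The only mild obstacle is bookkeeping with the cumulative shifts $a_{ij}, b_{ij}$; once their additive behavior under iteration (with the $+1$ correction at each step, matching the monotonicity slack) is recognized, the argument reduces to routine arithmetic.
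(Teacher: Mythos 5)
Your proof is correct, and it is exactly the routine chaining-plus-monotonicity verification the paper intends when it dismisses the lemma as ``straightforward'': the consecutive conditions telescope to the pairwise ones because $a_{i,j+1}=a_{ij}+a_j+1$, $b_{i,j+1}=b_{ij}+b_j+1$, and the weak decrease of $\la^{(j+1)}$ absorbs the index shift, while the converse is the special case $j=i+1$.
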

\begin{proof}
The lemma is straightforward.
\end{proof}

We have an obvious inclusion of vector spaces 
$\M_{\bs a,\bs b}(u)\to \F(u_1)\otimes\dots \otimes\F(u_n)$. 
In particular, the space $\M_{\bs a,\bs b}(u)$ inherits the $\Z$-grading.

We define the action of operators $\psi^\pm(z), e(z),f(z)$ on $\M_{\bs
  a,\bs b}(u)$ using the action of $\E$ on the tensor product
$\F(u_1)\otimes\dots\otimes
\F(u_n)$. Namely, let the matrix coefficients of operators
$\psi^\pm(z), e(z),f(z)$ acting on $\M_{\bs a,\bs b}(u)$ in the basis
$\ket{\la^{(1)},\dots,\la^{(n)}}$ be the same as the corresponding
matrix coefficients for the tensor action.

\begin{prop}\label{submodule well-defined n}
Assume that $q_1,q_2,u$ are generic. Then the action of operators
$\psi^\pm(z), e(z),f(z)$ in
$\M_{\bs a,\bs b}(u)$ is well-defined and gives a structure of a graded
$\E$-module.
\end{prop}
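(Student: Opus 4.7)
The plan is to establish three things: well-definedness of the matrix coefficients at the resonance \eqref{shift n}, closure of $\M_{\bs a,\bs b}(u)$ under the action, and preservation of the defining relations of $\E$ under the specialization. Grading is automatic because $e(z), f(z), \psi^\pm(z)$ have only components of degrees $+1,-1,0$ respectively, independent of the value of $u$.

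First I would invoke Lemma \ref{2n submodule} to reduce to $n=2$: since $\M_{\bs a,\bs b}(u)$ is the intersection over pairs $1\le i<j\le n$ of the pairwise subspaces $\M_{a_{ij},b_{ij}}(u_i)\subset \F(u_i)\otimes\F(u_j)$, and $\psi^\pm(z)$ acts diagonally on the tensor basis, closure and well-definedness of $e(z), f(z)$ on each pairwise subspace imply the same for the full subspace. In the $n=2$ case with $u_1/u_2 = q_1^{a+1}q_3^{b+1}$ the admissibility condition becomes $\la^{(1)}_s\ge \la^{(2)}_{s+b}-a$ for all $s\ge 1$. The operators $\psi^\pm(z)$ are diagonal, with eigenvalues products of the Fock eigenvalues \eqref{psi expl}, which are regular at the resonance since $q_1,q_2$ are generic. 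For $e(z)$ and $f(z)$ the comultiplication \eqref{tre}, \eqref{trf} gives matrix coefficients that factor as a Fock matrix coefficient on one tensor slot times an evaluation of $\psi^\mp(z)$ on the other slot at the support of the delta function appearing in \eqref{e}--\eqref{f}.

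Using this factorization together with the semi-infinite construction of Fock from vector modules and Lemma \ref{2 point}, I would carry out two key checks. First, every matrix coefficient whose source lies in $\M_{a,b}(u)$ and whose target lies outside of it vanishes: the only exiting transitions are adding a box at position $t>b$ to $\la^{(2)}$ when $\la^{(1)}_{t-b} = \la^{(2)}_t - a$ (under $e(z)$), or removing a box at position $s$ from $\la^{(1)}$ when $\la^{(1)}_s = \la^{(2)}_{s+b}-a$ (under $f(z)$); in each case the $\psi^\mp(z)$ prefactor from the comultiplication, evaluated at the pole of the delta function, should produce a zero precisely at $u_1/u_2 = q_1^{a+1}q_3^{b+1}$. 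Second, every undefined matrix coefficient has both source and target outside $\M_{a,b}(u)$: by Lemma \ref{2 point} the undefined ratios $v/u = q_1^{i-j}$ or $q_1^{i-j+1}$ in the vector components, translated through the vector-module parameters $u_p q_2^{-s+1}$ and the resonance, correspond to $(\la^{(1)}_s,\la^{(2)}_t)$ configurations violating admissibility both before and after the transition.

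The verification of the first check is the combinatorial heart of the argument and the step I expect to be the main obstacle: one has to track carefully which factor in the infinite product defining the $\psi^\mp(z)$ eigenvalue produces the required zero at the resonance, using \eqref{psi expl}. Once these two checks are in place, the $\E$-module relations follow by continuity in $u_1,u_2$: for generic parameters the matrix coefficients on the full tensor product are rational and satisfy the relations by the preceding lemma, so their well-defined limits on $\M_{a,b}(u)$ at the resonance continue to satisfy them.
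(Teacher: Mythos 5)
Your proposal follows essentially the same route as the paper: reduce to $n=2$ via Lemma \ref{2n submodule}, then check (a) that the transitions leaving $\M_{a,b}(u)$ (which you identify correctly) have vanishing matrix coefficients because the $\psi^\mp(z)$ prefactor hits a zero of Lemma \ref{2 point} at the resonance, and (b) that the possibly undefined coefficients of Lemma \ref{2 point} never occur with source in $\M_{a,b}(u)$, the relations then following by specialization of rational functions. The verification you defer is exactly the one the paper carries out and is routine: translating the zero/pole conditions of Lemma \ref{2 point} through the Fock parameters $u_iq_2^{1-s}$ forces, by genericity of $q_1,q_3$, a single relevant pair of rows (e.g.\ $l=s+b+1$ for the poles of the $f$-coefficient), after which the admissibility inequalities $\la_s\ge\mu_{s+b}-a$ exclude the poles and produce the required zeros.
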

\begin{proof} 
Consider the case $n=2$. Let $a_1=a, b_1=b$.

It is sufficient to perform the following checks. 
\begin{enumerate}
\item 
If $\ket{\la,\mu}\in \M_{a,b}(u)$, then the matrix coefficients
$
\bra{\la,\mu}\ e(z)\ \ket{\la,\mu-\one_s}
$, 
$
\bra{\la,\mu}\ f(z)\ \ket{\la+\one_s, \mu}
$
are well defined. 
\item If $\ket{\la,\mu}\in \M_{a,b}(u)$, then 
\begin{align*}
&\ket{\la,\mu+\one_s}\not\in \M_{a,b}(u)\Rightarrow 
\bra{\la,\mu+\one_s}\ e(z)\ \ket{\la,\mu}=0,
\\
&\ket{\la-\one_s,\mu}\not\in \M_{a,b}(u)
\Rightarrow 
\bra{\la-\one_s,\mu}\ f(z)\ \ket{\la,\mu}=0\,.
\end{align*}
\end{enumerate}

All the checks are straightforward using Lemma \ref{2 point}. 

For
example, let $\ket{\la}\otimes\ket{\mu}, \ket{\la+\bs
  1_s}\otimes\ket{\mu}\in \M_{a,b}(u)$ and consider
$\bra{\la}\otimes\bra{\mu}\ f(z)\ \ket{\la+\bs 1_s}\otimes\ket{\mu}$. By
Lemma \ref{2 point} the poles of this matrix coefficient happen if
$$
uq_2^{1-s}/(vq_2^{1-l})=q_1^{\mu_l-l-\la_s+s}\quad {\rm or}\quad
uq_2^{1-s}/(vq_2^{1-l})=q_1^{\mu_l-l-\la_s+s-1}.
$$ 
Which means $q_1^{\mu_l-\la_s-a-1}=q_3^{s-l+b+1}$ or 
$q_1^{\mu_l-\la_s-a-2}=q_3^{s-l+b+1}$. Equivalently, $l=s+b+1$, and
$\la_s=\mu_{s+b+1}-a-1$ or $\la_s=\mu_{s+b+1}-a-2$. 
This is impossible, because 
$$
\la_s\geq \mu_{s+b}-a\geq \mu_{s+b+1}-a>\mu_{s+b+1}-a-1>\mu_{s+b+1}-a-2.
$$

Similarly, let $\ket{\la}\otimes\ket{\mu}\in \M_{a,b}(u)$ and
$\ket{\la}\otimes\ket{\mu+\one_s}\not\in \M_{a,b}(u)$. 
Then we have $s-b\geq 1$ and $\la_{s-b}=\mu_{s}-a$. 
It follows that the coefficient
$\bra{\la}\otimes\bra{\mu+\one_s}\ e(z)\ \ket{\la}\otimes\ket{\mu}$ 
vanishes by Lemma \ref{2 point}, since
$vq_2^{1-s}/(uq_2^{1-(s-b)})=q_1^{b-a}q_2=q_1^{\la_{s-b}-\mu_s+b-1}q_3^{-1}$.

We omit further details.

Since all the necessary checks reduce to the case of $n=2$ due to
Lemma \ref{2n submodule}, the general case 
Proposition \ref{submodule well-defined n} follows.
\end{proof}

\begin{thm}\label{tame thm 1}
Assume that $q_1,q_2,u$ are generic.
Then $\M_{\bs a,\bs b}(u)$ is an irreducible, tame, highest weight 
$\E$-module with highest weight 
$\prod_{i=1}^n\psi^\pm_{\emptyset}(u_i/z)$.
\end{thm}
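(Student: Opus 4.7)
The plan is to establish the three properties---highest weight, tameness, irreducibility---in sequence, with the combinatorial core being removal of a corner from the rightmost non-empty partition. For the highest weight claim, the vacuum $\ket{\emptyset^{(1)},\dots,\emptyset^{(n)}}$ lies in $\M_{\bs a,\bs b}(u)$ since each admissibility inequality reduces to $0\geq -a_i$. Iterating \Ref{trf} together with $f(z)\ket{\emptyset}=0$ in each Fock factor shows $f(z)$ annihilates it, and iterating \Ref{trpsi} yields the $\psi^\pm$-eigenvalue $\prod_{i=1}^n\psi^\pm_{\emptyset}(u_i/z)$. For cyclicity I induct on $|\bs\la|=\sum_i|\la^{(i)}|$: given $\bs\la\ne\emptyset$, let $i^*$ be the largest index with $\la^{(i^*)}\ne\emptyset$ and $s^*$ any corner of $\la^{(i^*)}$. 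Then $\bs\la-\bs 1^{(i^*)}_{s^*}$ remains in $\M_{\bs a,\bs b}(u)$ because the only admissibility condition that could be weakened is $\la^{(i^*)}_{s^*}\geq\la^{(i^*+1)}_{s^*+b_{i^*}}-a_{i^*}$, whose right-hand side equals $-a_{i^*}$ (since $\la^{(i^*+1)}=\emptyset$) and whose left-hand side stays $\geq 0$. By the inductive hypothesis, $\ket{\bs\la-\bs 1^{(i^*)}_{s^*}}$ lies in the submodule generated by the vacuum; applying Fourier modes of $e(z)$ produces a sum containing $\ket{\bs\la}$ as one summand, and the simple spectrum of $\psi^+(z)$ (established below) lets us project it out.

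Tameness is inherited from the ambient tensor product: $\psi^\pm(z)$ acts diagonally on the basis $\{\ket{\bs\la}\}$ by construction, and simplicity of the joint spectrum follows by the same pole-matching argument used in the preceding lemma, since collisions between the characteristic poles $z=q_1^{\la^{(i)}_k}q_3^{k-1}u_i$ are excluded by genericity of $q_1$ and of $u=u_1$ (the resonance relations $u_{i+1}=u_iq_1^{-a_i-1}q_3^{-b_i-1}$ introduce no new coincidences when $q_1,q_2$ are generic). Irreducibility then follows in the standard way: any non-zero submodule $W$ is a sum of joint $\psi^\pm$-eigenlines by tameness, so contains some $\ket{\bs\la}$; iterating the corner-removal step with $f(z)$ in place of $e(z)$ (the admissibility check is identical) produces a chain down to $\ket{\emptyset^{(1)},\dots,\emptyset^{(n)}}\in W$, whereupon cyclicity forces $W=\M_{\bs a,\bs b}(u)$.

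The substantive obstacle is verifying non-vanishing of the matrix coefficients $\bra{\bs\la}e(z)\ket{\bs\la-\bs 1^{(i^*)}_{s^*}}$ and $\bra{\bs\la-\bs 1^{(i^*)}_{s^*}}f(z)\ket{\bs\la}$ at the resonant values $u_i=u_{i+1}q_1^{a_i+1}q_3^{b_i+1}$. From the iterated comultiplication applied to \Ref{e} and \Ref{f}, these take the form of a delta function supported at $z=q_1^{\la^{(i^*)}_{s^*}-1}q_3^{s^*-1}u_{i^*}$ times a rational factor built as a product of $\psi^\mp$-eigenvalues evaluated at that support. Via Lemma \ref{2 point} applied pairwise to $(i^*,j)$, any vanishing of such a factor demands a coincidence $u_{i^*}/u_j=q_1^cq_2^{-1}$ or $q_1^cq_3^{-1}$ for specific integers $c$ depending on $\la^{(i^*)}_{s^*}$ and the parts of $\la^{(j)}$. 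Under the resonance $u_{i^*}/u_j=q_1^{a_{ji^*}+1}q_3^{b_{ji^*}+1}$ with $q_1,q_2$ generic, each such coincidence translates into a violation of one of the admissibility inequalities defining $\M_{\bs a,\bs b}(u)$, which is excluded by our choice of $(i^*,s^*)$. This is exactly the complement of the vanishing analysis already carried out in the proof of Proposition \ref{submodule well-defined n}, and completes the argument.
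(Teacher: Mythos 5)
Your overall architecture (highest weight, then tameness, then irreducibility via corner removal) matches the paper's, and the corner-removal path to the vacuum is a legitimate variant: it suffices to connect each basis vector to the vacuum, so you do not need non-vanishing of \emph{all} single-box matrix coefficients as the paper checks. However, your tameness argument contains a genuine error at exactly the point where the real work of the theorem lies.

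You assert that simplicity of the joint spectrum is ``inherited from the ambient tensor product'' because ``the resonance relations $u_{i+1}=u_iq_1^{-a_i-1}q_3^{-b_i-1}$ introduce no new coincidences when $q_1,q_2$ are generic.'' This is false, and the paper's Remark~\ref{mod sec}.5 (the remark following Theorem~\ref{tame thm 1}) gives an explicit counterexample: for $n=2$, $a_1=b_1=0$, the vectors $\ket{\emptyset}_u\otimes\ket{(2,2)}_{uq_2}$ and $\ket{(1)}_u\otimes\ket{(2,1)}_{uq_2}$ have identical $\psi^\pm$-eigenvalues in $\F(u)\otimes\F(uq_2)$. The resonance makes each ratio $u_i/u_j$ a monomial in $q_1,q_3$, so the pole of $\psi_{\la^{(i)}}(u_i/z)$ at $z=q_1^{\la^{(i)}_1}u_i$ \emph{can} collide with zeros of $\psi_{\la^{(j)}}(u_j/z)$ or with poles of $\psi_{\mu^{(j)}}(u_j/z)$ for $j\ne i$; genericity of $q_1,q_2,u$ does not prevent this. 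What saves the theorem is that every such collision forces a numerical relation like $\la^{(i)}_1=\la^{(j)}_s-\sum_{l=i}^{j-1}(a_l+1)$ with $s=\sum_{l=i}^{j-1}(b_l+1)$, which contradicts the admissibility inequalities $\la^{(i)}_s\ge\la^{(i+1)}_{s+b_i}-a_i$ defining $\M_{\bs a,\bs b}(u)$ --- plus a residual case (cancellation against $z=q_1^{\la^{(i-1)}_1-1}q_3^{-1}u_{i-1}$ when $b_{i-1}=0$) that must be handled by arguing sequentially in $i$ starting from $i=1$, where no cancellation is possible. Your proof needs this case analysis; without it, tameness is unproved, and since you invoke the simple spectrum both to project out $\ket{\bs\la}$ in the cyclicity step and to decompose a submodule into basis lines, the gap propagates to the whole argument. (Your third paragraph, on non-vanishing of the corner matrix coefficients, is at the same level of rigor as the paper's own ``we omit further details'' and I would accept it.)
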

\begin{proof}
First, let us show the module is tame.
Assume that for some $\ket{\la^{(1)},\dots,\la^{(n)}}, 
\ket{\mu^{(1)},\dots,\mu^{(n)}}\in \M_{\bs a,\bs b}(u)$, we have
\be
\prod_{i=1}^n \psi_{\la^{(i)}}(u_i/z)=\prod_{i=1}^n \psi_{\mu^{(i)}}(u_i/z).
\en
Recall that $\psi_{\la^{(i)}}(u_i/z)$ has a pole at $z=q_1^{\la^{(i)}_1}u_i$, see \Ref{psi expl}. 
We show that for $i=1,\dots,n$, $\la^{(i)}_1=\mu^{(i)}_1$
by showing that the  pole $z=q_1^{\la^{(i)}_1}u_i$ in the left hand side
is canceled by the pole  $z=q_1^{\mu^{(i)}_1}u_i$ in the right hand side. 

Suppose the pole $z=q_1^{\la^{(i)}_1}u_i$ is canceled by zeroes of
$\psi_{\la^{(j)}}(u_j/z)$.  There are two possible cases:
$q_1^{\la^{(i)}_1}u_i=q_1^{\la^{(j)}_s}q_3^{s}u_j$, where
$s\in\Z_{\geq 1}$ and
$q_1^{\la^{(i)}_1}u_i=q_1^{\la^{(j)}_{s+1}-1}q_3^{s-1}u_j$, where
$s\in\Z_{\geq 0}$.

Suppose 
$q_1^{\la^{(i)}_1}u_i=q_1^{\la^{(j)}_s}q_3^{s}u_j$, where $s\in\Z_{\geq 1}$. 
Since $s\geq 1$, 
and \Ref{shift n} holds, it implies that $j>i$ and  
\begin{eqnarray*}
\la^{(i)}_1=\la^{(j)}_s-\sum_{l=i}^{j-1}(a_l+1), \qquad
s=\sum_{l=i}^{j-1}(b_l+1).
\end{eqnarray*}
But then we have
$$
\la^{(i)}_1\geq \la^{(j)}_{1+\sum_{l=i}^{j-1}b_l}-\sum_{l=i}^{j-1}a_l>
\la^{(j)}_{\sum_{l=i}^{j-1}(b_l+1)}-\sum_{l=i}^{j-1}(a_l+1)=\la_1^{(i)},
$$
which is a contradiction. 

Suppose $q_1^{\la^{(i)}_1}u_i=q_1^{\la^{(j)}_{s+1}-1}q_3^{s-1}u_j$, 
where $s\in\Z_{\geq 0}$.
By a similar argument we show that it is possible only if 
$s=0,j=i-1,b_{i-1}=0$ and $\la^{(i)}_1=\la_1^{(i-1)}-1$.

Now suppose the pole  $z=q_1^{\la^{(i)}_1}u_i$ 
is canceled by poles of $\psi_{\mu^{(j)}}(u_j/z)$. We again have two cases.
For example, suppose 
$q_1^{\la^{(i)}_1}u_i=q_1^{\mu^{(j)}_{s+1}}q_3^{s}u_j$ and 
$(i,1)\neq (j,s+1)$. 
Then we necessarily have $i<j$ and
\begin{eqnarray*}
\la^{(i)}_1=\mu^{(j)}_{s+1}-\sum_{l=i}^{j-1}(a_l+1),\qquad
s=\sum_{l=i}^{j-1}(b_l+1).
\end{eqnarray*}
It implies that
$$
\mu^{(i)}_1\geq \mu^{(j)}_{1+\sum_{l=i}^{j-1}b_l}-\sum_{l=i}^{j-1}a_l>\mu^{(j)}_{\sum_{l=i}^{j-1}(b_l+1)}-\sum_{l=i}^{j-1}(a_l+1)=\la_1^{(i)}.
$$
Similarly, we obtain $\mu^{(i)}_1>\la_1^{(i)}$
in the other case of $q_1^{\la^{(i)}_1}u_i=q_1^{\mu^{(j)}_{s}-1}q_3^{s-1}u_j$.

We claim $\la^{(1)}_1=\mu^{(1)}_1$. Indeed, for $i=1$ the cancellation of the
poles with zeroes is impossible and as we saw, the cancellation of poles
$z=q_1^{\la^{(1)}_1}u_1$ and $z=q_1^{\mu^{(1)}_1}u_1$ with other poles
imply both $\mu^{(1)}_1>\la_1^{(1)}$ and $\la^{(1)}_1>\mu_1^{(1)}$,
which is a contradiction.

Next we claim that $\la^{(2)}_1=\mu^{(2)}_1$. Indeed, since the terms with $\la^{(1)}_1$ and $\mu^{(1)}_1$ cancel each other,
the cancellation of the poles $z=q_1^{\la^{(2)}_1}u_1$ and 
$z=q_1^{\mu^{(2)}_1}u_1$
with zeroes is again impossible and cancellation with other poles
leads to a contradiction.

Repeating, we obtain $\la^{(i)}_1=\mu^{(i)}_1$ for $i=1,\dots,n$. 

Cancel the corresponding factors and replace $u_i$ with $u_iq_3^{-1}$.
Then, a similar argument gives $\la^{(i)}_2=\mu^{(i)}_2$, where
$i=1,\dots,n$. Repeating the argument, we prove that the module
$\M_{\bs a,\bs b}(u)$ is tame.

Now, to prove that  $\M_{\bs a,\bs b}(u)$ is irreducible, it is sufficient to show that if vectors $\ket{\bs\la}$,
$\ket{\bs\la+\bs 1_s^{(i)}}$ are both in 
$\M_{\bs a,\bs b}(u)$, then 
$\bra{\bs\la+\bs 1_s^{(i)}}e(z)\ket{\bs\la}$ and
$\bra{\bs\la}f(z)\ket{\bs\la+\bs 1_s^{(i)}}$ are non-zero. 
It is similar to that of Proposition \ref{submodule well-defined n}.
We omit further details.
\end{proof}
The character of $\M_{\bs a,\bs b}(u)$ is given in Theorem \ref{thm:Mchar}.

The tensor action of $\E$ on the space $\F(u_1)\otimes\dots\otimes
\F(u_n)$ for generic $u_i$ does not have a limit to the case
\Ref{shift n} in the basis
$\ket{\la^{(1)},\dots,\la^{(n)}}$. This limit exists only on 
$\M_{\bs a,\bs b}(u)$. However, we think
of $\M_{\bs a,\bs b}(u)$ as ``a submodule of $\F(u_1)\otimes\dots\otimes
\F(u_n)$''.

\begin{rem}
Note that in the case of \Ref{shift n},
the action of operators
$\psi^\pm_{\pm i}$ on  $\F(u_1)\otimes\dots\otimes\F(u_n)$ is well-defined,
however the joint spectrum of $\psi^\pm_{\pm i}$ is not simple. 
For example, consider the case $n=2$, $a_1=b_1=0$. Thus we consider
$\F(u)\otimes \F(uq_2)$. Then the vectors
$\ket{\emptyset}_u\otimes \ket{(2,2)}_{uq_2}$ and 
$\ket{(1)}_u\otimes \ket{(2,1)}_{uq_2}$
have the same $\psi^\pm_{\pm i}$ eigenvalues.
\end{rem}

\subsection{Resonance in $q_1,q_3$}
Consider the tensor product of $n$ Fock modules
$\F(u_1)\otimes\dots \otimes\F(u_n)$ with $n\geq 2$.

Assume \Ref{shift n} and let $p',p$ be some integers such that
$$
 a_n= p'-1-\sum_{i=1}^{n-1}(a_i+1),\quad b_n=p-1-\sum_{i=1}^{n-1}(b_i+1)
$$
belong to $\Z_{\geq 0}$.
Assume further that 
\bea\label{resonance n}
q_1^{p'}q_3^p=1,\ \ p\neq p'.
\ena

More precisely, by 
equality \Ref{resonance n} we mean that $q_1^xq_3^y=1$ if and only if
$x=p'\kappa$, $y=p\kappa$ for some $\kappa\in\Z$.

We use a cyclic modulo $n$ convention for indices and
suffixes: $u_{n+1}=u_1$, $\la^{(0)}=\la^{(n)}$, etc. 
Let
\begin{align}
\M^{p',p}_{\bs a,\bs b}(u)
={\rm span}\{ \ket{\la^{(1)},\dots,\la^{(n)}}\ 
|\ \la^{(i)}_s\geq \la^{(i+1)}_{s+b_i}-a_i, \
{\rm where}\ s\in\Z_{\geq 1},\ i=1,\dots, n\} .
\label{Mpp}
\end{align}

The following lemma shows that the definition of $\M_{\bs a,\bs b}^{p',p}(u)$ 
is in fact a superposition of $n=2$ conditions.

\begin{lem}\label{2n} We have
$\ket{\la^{(1)},\dots,\la^{(n)}}\in
\M^{p',p}_{\bs a,\bs b}(u)$ if and only if for all $i,j$, $1\leq i<j\leq n$, 
$\ket{\la^{(i)}}\otimes\ket{\la^{(j)}}\in \M^{p',p}_{a_{ij},b_{ij}}(u_i)$. 
\end{lem}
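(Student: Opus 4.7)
The plan is to adapt the proof of Lemma \ref{2n submodule} to the cyclic setting, handling the extra condition at $i=n$ and the ``backward'' inequality that appears in the $n=2$ instance of $\M^{p',p}$. Recall that $\M^{p',p}_{a_{ij},b_{ij}}(u_i)$ stands for $\M^{p',p}_{(a_{ij},a_{ji}),(b_{ij},b_{ji})}(u_i)$ with $a_{ji}:=p'-a_{ij}-2$ and $b_{ji}:=p-b_{ij}-2$ (forced by the constraints $\sum_l(a_l+1)=p'$ and $\sum_l(b_l+1)=p$ that make $q_1^{p'}q_3^{p}=1$ consistent with the $n=2$ resonance). Thus the pairwise condition imposes on $(\la^{(i)},\la^{(j)})$ both the \emph{forward} inequality $\la^{(i)}_s\ge\la^{(j)}_{s+b_{ij}}-a_{ij}$ and the \emph{backward} one $\la^{(j)}_s\ge\la^{(i)}_{s+b_{ji}}-a_{ji}$.

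For the ``if'' direction, the pair $(i,i+1)$ with $i<n$ gives directly the consecutive condition $\la^{(i)}_s\ge\la^{(i+1)}_{s+b_i}-a_i$, since $a_{i,i+1}=a_i$ and $b_{i,i+1}=b_i$. The remaining cyclic condition $\la^{(n)}_s\ge\la^{(1)}_{s+b_n}-a_n$ is supplied by the backward inequality of the pair $(1,n)$: a short arithmetic check from $a_n=p'-1-\sum_{l=1}^{n-1}(a_l+1)$ (and its analogue for $b_n$) gives $a_{1n}=p'-a_n-2$, hence $a_{n,1}=p'-a_{1n}-2=a_n$ and similarly $b_{n,1}=b_n$.

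For the ``only if'' direction, fix $1\le i<j\le n$. Chaining the $j-i$ consecutive $n$-ary conditions for $k=i,\dots,j-1$ produces
\be
\la^{(i)}_s & \ge & \la^{(j)}_{s+\sum_{k=i}^{j-1}b_k}-\sum_{k=i}^{j-1}a_k.
\en
Rewriting $\sum(a_k+1)=a_{ij}+1$ and $\sum(b_k+1)=b_{ij}+1$, the right-hand side becomes $\la^{(j)}_{s+b_{ij}-(j-i)+1}-a_{ij}+(j-i)-1$; the slack $(j-i)-1\ge 0$ is absorbed by the monotonicity $\la^{(j)}_{s+b_{ij}-(j-i)+1}\ge\la^{(j)}_{s+b_{ij}}$, yielding the forward inequality. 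The backward inequality $\la^{(j)}_s\ge\la^{(i)}_{s+b_{ji}}-a_{ji}$ is obtained by an analogous chain of length $n-j+i$ around the cycle $k=j,\dots,n,1,\dots,i-1$, this time invoking the $n$-th (cyclic) $n$-ary condition together with the global sums $\sum_{k=1}^n(a_k+1)=p'$ and $\sum_{k=1}^n(b_k+1)=p$; the resulting slack is $n-j+i-1\ge 0$, and is again absorbed by the monotonicity of $\la^{(i)}$.

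The only technical obstacle is the careful bookkeeping of indices modulo $n$ along the cyclic chain; nonnegativity of both monotonicity slacks is automatic because $1\le i<j\le n$.
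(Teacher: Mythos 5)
Your argument is correct: the paper itself only says ``The lemma is straightforward,'' and your proof is the natural chaining argument it has in mind — adjacent pairs plus the backward inequality of the pair $(1,n)$ recover the $n$ cyclic conditions, while chaining the cyclic conditions (forward from $i$ to $j$, and around the cycle from $j$ back to $i$ using $\sum_{k=1}^n(a_k+1)=p'$, $\sum_{k=1}^n(b_k+1)=p$) recovers both inequalities of each pairwise condition, with the slacks absorbed by monotonicity of partitions. The index bookkeeping and the identification $a_{ji}=p'-a_{ij}-2$, $b_{ji}=p-b_{ij}-2$ all check out, so this is a complete version of the omitted proof.
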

\begin{proof}
The lemma is straightforward.
\end{proof}

We have an obvious surjective map of linear spaces:
 $\M_{\bs a,\bs b}(u)\to \M^{p',p}_{\bs a,\bs b}(u)$, sending 
$\ket{\la^{(1)},\dots,\la^{(n)}}$
to either zero or to $\ket{\la^{(1)},\dots,\la^{(n)}}$. 
In particular, the space $\M^{p',p}_{\bs a,\bs b}(u)$ inherits the
$\Z$-grading.

We define the action of operator $\psi^\pm(z), e(z),f(z)$ on
$\M^{p',p}_{\bs a,\bs b}(u)$ as the factorized action of $\E$ on
$\M_{\bs a,\bs b}(u)$. Namely, let the matrix coefficients of
operators $\psi^\pm(z), e(z),f(z)$ in the basis
$\ket{\la^{(1)},\dots,\la^{(n)}}$ be the same as the
corresponding matrix coefficients in $\M_{\bs a,\bs b}(u)$.

\begin{prop}\label{well-defined n}
The action of operators
$\psi^\pm(z), e(z),f(z)$ in
$\M^{p',p}_{\bs a,\bs b}(u)$ is well-defined 
and gives a structure of a graded $\E$-module.
\end{prop}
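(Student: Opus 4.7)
The plan is to establish Proposition \ref{well-defined n} by showing that the surjection of linear spaces $\M_{\bs a,\bs b}(u)\twoheadrightarrow\M^{p',p}_{\bs a,\bs b}(u)$ is a morphism of $\E$-modules, i.e.\ that the span of the ``forgotten'' basis vectors (those in $\M_{\bs a,\bs b}(u)$ violating the new cyclic condition $\la^{(n)}_s\ge\la^{(1)}_{s+b_n}-a_n$) forms an $\E$-submodule of $\M_{\bs a,\bs b}(u)$. Since $\psi^\pm(z)$ acts diagonally its action is automatic, and well-definedness of $e(z),f(z)$ on $\M_{\bs a,\bs b}(u)$ itself is already known from Proposition \ref{submodule well-defined n}, so the only new content is the following vanishing:
\begin{align*}
\text{if }\ket{\bs\la}\in\M^{p',p}_{\bs a,\bs b}(u)\text{ and }\ket{\bs\la+\bs 1_s^{(i)}}\in\M_{\bs a,\bs b}(u)\setminus\M^{p',p}_{\bs a,\bs b}(u),\\
\text{ then }\bra{\bs\la+\bs 1_s^{(i)}}e(z)\ket{\bs\la}=0 \text{ and } \bra{\bs\la}f(z)\ket{\bs\la+\bs 1_s^{(i)}}=0.
\end{align*}

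By Lemma \ref{2n} the conditions defining $\M^{p',p}_{\bs a,\bs b}(u)$ are pairwise, and pairs with $1\le i<j\le n-1$ are already handled by Proposition \ref{submodule well-defined n}. Hence the verification reduces to the cyclic pair $(1,n)$, which in turn is an $n=2$ computation on $\F(u_1)\otimes\F(u_n)$. The decisive observation is that the ratio of spectral parameters between these two factors is, up to the resonance, of the same shape as in the non-cyclic cases: combining \Ref{shift n}, the formulas for $a_n,b_n$, and \Ref{resonance n} gives
$$
u_1 \;=\; u_n\prod_{i=1}^{n-1}q_1^{a_i+1}q_3^{b_i+1} \;=\; u_n\,q_1^{p'-a_n-1}q_3^{p-b_n-1} \;=\; u_n\,q_1^{-a_n-1}q_3^{-b_n-1},
$$
so that $u_n/u_1=q_1^{a_n+1}q_3^{b_n+1}$, exactly the ``resonance'' shape treated in Proposition \ref{submodule well-defined n}. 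Therefore the very same calculation (applying Lemma \ref{2 point} to recognize that the vanishing values $v/u=q_1^{i-j}q_2^{-1}$ or $q_1^{i-j}q_3^{-1}$ are hit precisely when adding a box would violate the inequality $\la^{(n)}_s\ge\la^{(1)}_{s+b_n}-a_n$) goes through verbatim for the pair $(1,n)$.

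Concretely, I would check only the ``borderline'' case: if $\ket{\bs\la+\bs 1_s^{(1)}}$ fails the cyclic condition, then $s\ge b_n+1$ and $\la^{(n)}_{s-b_n}=\la^{(1)}_s-a_n$, and substituting into the Lemma \ref{2 point} criterion gives the required vanishing for $e(z)$; the $f(z)$ case is symmetric. The assumption $p\neq p'$ ensures that the resonance does not degenerate to a root-of-unity condition on $q_2$ (which would violate our standing hypothesis that no $q_i$ is a root of unity) and is used only to guarantee the setup is consistent.

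The main obstacle is bookkeeping rather than new ideas: one must check that passing to the resonance \Ref{resonance n} does not introduce any unexpected singularities or cancellations in matrix coefficients beyond those accounted for by the pairwise reduction of Lemma \ref{2n}. Once this is handled, the consistency of the resulting operators with the defining relations of $\E$ is inherited from $\M_{\bs a,\bs b}(u)$ by the universal property of the quotient, and the $\Z$-grading is preserved because the relation $\la^{(n)}_s\ge\la^{(1)}_{s+b_n}-a_n$ is homogeneous.
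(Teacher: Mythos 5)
Your high-level framing is the same as the paper's: reduce to pairwise ($n=2$) checks via Lemma \ref{2n}, observe that $u_n/u_1=q_1^{a_n+1}q_3^{b_n+1}$, and show that the kernel of the surjection $\M_{\bs a,\bs b}(u)\to\M^{p',p}_{\bs a,\bs b}(u)$ is a submodule. The computation $u_n=u_1q_1^{a_n+1}q_3^{b_n+1}$ is correct. However, there is a genuine error in the vanishing conditions you propose to verify. For the kernel $K=\operatorname{span}\{\ket{\bs\mu}:\bs\mu\notin P^{p',p}_{\ba,\bb}\}$ to be invariant, you must kill the matrix elements \emph{leaving} $K$, namely $\bra{\bs\la}e(z)\ket{\bs\la-\bs 1_s^{(i)}}$ and $\bra{\bs\la}f(z)\ket{\bs\la+\bs 1_s^{(i)}}$ with $\ket{\bs\la}\in\M^{p',p}_{\ba,\bb}(u)$ and the other vector outside it; this is exactly check (2) in the paper. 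Your $f$-condition coincides with this, but your $e$-condition $\bra{\bs\la+\bs 1_s^{(i)}}e(z)\ket{\bs\la}=0$ is the \emph{submodule} condition (no matrix elements entering $K$), which is neither needed nor true: adding a box can only break the cyclic inequality $\la^{(n)}_s\ge\la^{(1)}_{s+b_n}-a_n$ by increasing $\la^{(1)}$, i.e.\ the \emph{first} factor of the relevant pair, and by Lemma \ref{2 point} the $e$-matrix coefficients $\bra{i+1}\otimes\bra{j}\,e(z)\,\ket{i}\otimes\ket{j}$ that add a box to the first factor are always non-zero. This asymmetry is precisely why the cyclic condition produces a quotient rather than a submodule: the inequality points backwards relative to the tensor ordering, so the roles of $e$ and $f$ are exchanged compared with Proposition \ref{submodule well-defined n}, and the earlier calculation does not go through ``verbatim.'' The vanishing you actually need for $e(z)$ (adding a box to $\la^{(n)}$, the \emph{second} factor of the pair, to repair a violated cyclic inequality) is never stated or checked.

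A secondary gap: you assert that well-definedness of $e(z),f(z)$ is ``already known from Proposition \ref{submodule well-defined n},'' but that proposition assumes $q_1,q_2,u$ generic. Under \eqref{resonance n} the equation $q_1^xq_3^y=1$ acquires the infinite family of solutions $(x,y)=(p'\kappa,p\kappa)$, $\kappa\in\Z$, so the pole criterion of Lemma \ref{2 point} is triggered at infinitely many new parameter configurations, for \emph{all} pairs of factors (not only the cyclic one). Ruling these out — the $\gamma<0$, $\gamma>0$, $\gamma=0$ case analysis using the chains of inequalities from \eqref{Mpp} — is the bulk of the paper's proof; it is not mere bookkeeping inherited from the generic case.
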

\begin{proof} 
Consider the case $n=2$ and set $a_1=a,b_1=b$.

It is sufficient to perform the following checks. 
\begin{enumerate}
\item 
If $\ket{\la,\mu}\in \M^{p',p}_{a,b}(u)$, then the matrix coefficients
$
\bra{\la,\mu+\one_s}\ e(z)\ \ket{\la,\mu}
$, 
$
\bra{\la-\one_s,\mu}\ f(z)\ \ket{\la, \mu}
$
are well defined. 
\item  If $\ket{\la,\mu}\in \M^{p',p}_{a,b}(u)$, then
\begin{align*}
&\ket{\la,\mu-\one_s}\not\in \M^{p',p}_{a,b}(u)\Rightarrow 
\bra{\la,\mu}\ e(z)\ \ket{\la,\mu-\one_s}=0,
\\
&\ket{\la+\one_s,\mu}\not\in \M^{p',p}_{a,b}(u)
\Rightarrow 
\bra{\la,\mu}\ f(z)\ \ket{\la+\one_s,\mu}=0\,.
\end{align*}
\end{enumerate}

All the checks are straightforward using Lemma \ref{2 point}. 

For
example, consider
$\bra{\la}\otimes\bra{\mu+\bs 1_s}\ e(z)\ \ket{\la}\otimes\ket{\mu}$. 

By
Lemma \ref{2 point} the poles of this matrix coefficient happen if
$vq_2^{1-s}/(uq_2^{1-l})=q_1^{\la_l-l-\mu_s+s}$ or
$uq_2^{1-s}/(vq_2^{1-l})=q_1^{\la_l-l-\mu_s+s-1}$. 
Which means $q_1^{\la_l-\mu_s+a+1}q_3^{l-s+b+1}=0$ or 
$q_1^{\la_l-\mu_s+a_1+1}q_3^{l-s+b+1}=0$. Equivalently, due to \Ref{resonance n},
\begin{align*}
l&=s-b-1+\gamma(b_1+b_2+2),\qquad \la_l-\mu_s+a_1+1=\gamma(a_1+a_2+2)\quad {\rm or}\\ 
l&=s-b-1+\gamma(b_1+b_2+2),\qquad  \la_l-\mu_s+a_1=\gamma(a_1+a_2+2) 
\end{align*}
for some 
$\gamma\in\Z$. 

Therefore
$\la_s=\mu_{s+b_1+1-\gamma(b_1+b_2+2)}+\gamma(a_1+a_2+2)-a_1-1$ or
$\la_s=\mu_{s+b_1+1-\gamma(b_1+b_2+2)}+\gamma(a_1+a_2+2)-a_1$.

Let, first, $\gamma<0$. Then
$$
\la_l-\mu_{l-\gamma(b_1+b_2+2)+b_1+1}\geq \la_l-\mu_{l-\gamma+b_1+1}
+\gamma(a_1+a_2)
\geq \la_l-\mu_{l+b_1}+\gamma(a_1+a_2)\geq -a_1+\gamma(a_1+a_2),
$$
and therefore the poles of the matrix coefficient do not occur. 

Let $\gamma>0$. Then
\begin{eqnarray*}
\la_l-\mu_{l-\gamma(b_1+b_2+2)+b_1+1)}\leq 
\la_{l-(\gamma-1)(b_1+b_2)}-\mu_{l-\gamma(b_1+b_2+2)+b_1+1}+(\gamma-1)(a_1+a_2)
 \hspace{25pt}\\
\leq a_2+(\gamma-1)(a_1+a_2),
\end{eqnarray*}
and again, such a pole is impossible.

Let now $\gamma=0$. That is $s=l+b_1+1$ and $\la_l-\mu_{l+b_1+1}=-a_1$
or $\la_l-\mu_{l+b_1+1}=-a_1-1$. Since $\la_l-\mu_{l+b_1+1}\geq
\la_l-\mu_{l+b_1}\geq -a_1$, the second case is impossible and in the
first case we have $\mu_{l+b_1+1}=\mu_{l+b_1}$ and therefore our
matrix coefficient was zero already for generic $u,v,q_1,q_2$.  Note that
we use here that $b_1\geq 0$, otherwise in the case $l=1$ the index
of $\mu_{s-1}=\mu_{l+b_1}$ is non-positive and the coefficient does
not have to be zero.

We omit further details.

The general case of Proposition \ref{well-defined n} 
reduces to the case of $n=2$ by Lemma \ref{2n}.
\end{proof}

\begin{thm}\label{tame thm 2}
Assume in addition that $p>n$. Then
the $\E$-module $\M_{\bs a,\bs b}^{p',p}(u)$ is an irreducible, tame, 
highest weight $\E$-module 
with highest weight $\prod_{i=1}^n\psi^\pm_{\emptyset}(u_i/z)$.
\end{thm}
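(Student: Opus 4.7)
The plan is to adapt the three-step argument of Theorem \ref{tame thm 1} to the cyclic, resonant setting: verify the highest weight property, prove tameness by analyzing the poles of the joint $\psi^\pm$-eigenvalues, and deduce irreducibility from the non-vanishing of the matrix coefficients of $e(z)$ and $f(z)$.

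For the highest weight property, the vacuum $\ket{\bs\emptyset}=\ket{\emptyset,\dots,\emptyset}$ lies in $\M^{p',p}_{\bs a,\bs b}(u)$, because every admissibility inequality (including the new cyclic one at $i=n$) reduces to $0\ge -a_i$, and $a_i\ge 0$. By definition of the action as the factorized tensor action, $f(z)\ket{\bs\emptyset}=0$ and $\psi^\pm(z)\ket{\bs\emptyset}=\prod_{i=1}^n\psi^\pm_\emptyset(u_i/z)\ket{\bs\emptyset}$. That $\ket{\bs\emptyset}$ generates the module will follow once irreducibility is established.

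For tameness I would compare $\prod_{i=1}^n\psi_{\la^{(i)}}(u_i/z)=\prod_{i=1}^n\psi_{\mu^{(i)}}(u_i/z)$ for admissible $\bs\la,\bs\mu$ and track the rightmost pole $z=q_1^{\la^{(i)}_1}u_i$ on the left, cataloguing its possible cancellations exactly as in Theorem \ref{tame thm 1}. The new feature is the resonance $q_1^{p'}q_3^p=1$, which determines $u_i/u_j$ only modulo $(q_1^{p'},q_3^p)$; each additional ``wrap-around'' match, corresponding to $\gamma$ windings around the cycle, must be ruled out using the cyclic admissibility chain $\la^{(i)}_s\ge \la^{(i)}_{s+p}-p'$ obtained by iterating all $n$ two-point admissibility conditions. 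The hypothesis $p>n$ is precisely what forces these inequalities to propagate strictly across one winding: after a full cycle the column index shifts by $p$, which strictly exceeds the number $n$ of Fock factors, so any proposed intra-cycle match between a first part and a part further around the cycle contradicts admissibility. Once $\la^{(i)}_1=\mu^{(i)}_1$ is forced for each $i$, the usual trick of cancelling the corresponding factors, replacing $u_i\mapsto u_iq_3^{-1}$, and iterating in $s$ yields $\bs\la=\bs\mu$.

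For irreducibility, given tameness it suffices to show that $\bra{\bs\la+\bs 1_s^{(i)}}e(z)\ket{\bs\la}$ and $\bra{\bs\la}f(z)\ket{\bs\la+\bs 1_s^{(i)}}$ are non-zero whenever both states lie in $\M^{p',p}_{\bs a,\bs b}(u)$. By Lemma \ref{2n} this reduces to the two-Fock case, where the required non-vanishing is the converse of the vanishing analysis already carried out in Proposition \ref{well-defined n} via Lemma \ref{2 point}: a vanishing matrix coefficient forces one of the two states to violate admissibility. The main obstacle in the whole proof will be the combinatorial bookkeeping for the new cyclic pole-matches in the tameness step; pinning down exactly how $p>n$ closes each off is the one genuinely new ingredient, and everything else is a direct adaptation of Theorem \ref{tame thm 1} and Proposition \ref{well-defined n}.
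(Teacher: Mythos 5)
Your overall skeleton matches the paper's: highest weight is immediate from the definition of the factorized action, tameness is proved by matching the poles of $\prod_i\psi_{\la^{(i)}}(u_i/z)$ against possible cancelling zeros and poles, and irreducibility follows from tameness plus non-vanishing of the matrix coefficients $\bra{\bs\la+\bs 1_s^{(i)}}e(z)\ket{\bs\la}$, $\bra{\bs\la}f(z)\ket{\bs\la+\bs 1_s^{(i)}}$, reduced to $n=2$ via Lemma \ref{2n}. However, there is a genuine gap in your tameness step: you misidentify where $p>n$ enters, and you omit the one cancellation that actually forces this hypothesis. The wrap-around matches $q_1^{\la^{(i)}_1}u_i=q_1^{\la^{(j)}_s}q_3^{s}u_j$ with $\kappa\ge 1$ windings are excluded by iterating the cyclic admissibility chain, but the strictness there does \emph{not} come from ``the column index shifting by $p>n$'': it comes from the fact that along a chain from $i$ to $j$ one accumulates $\sum_l(a_l+1)$ while admissibility only costs $\sum_l a_l$, so each genuine step contributes a strict $+1$. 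That argument goes through whether or not $p>n$ (and indeed the paper's inequalities for $\kappa\ge 0$, $j>i$ and $\kappa\ge1$, $j<i$ never invoke $p>n$).

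The case you have not addressed is the cancellation of the pole $z=q_1^{\la^{(i)}_1}u_i$ by a zero of the \emph{boundary} factor of the adjacent $\psi_{\la^{(i-1)}}$, i.e.\ the match $q_1^{\la^{(i)}_1}u_i=q_1^{\la^{(j)}_{s+1}-1}q_3^{s-1}u_j$ with $s=0$, $j=i-1$. This match is consistent with admissibility exactly when $b_{i-1}=0$, so it cannot be ruled out by any inequality; it is a genuine obstruction. In the non-cyclic setting of Theorem \ref{tame thm 1} one escapes it by starting the induction at $i=1$, which has no left neighbour; in the cyclic setting every index has a left neighbour, and this is precisely where $p>n$ is used: since $\sum_{i=1}^n(b_i+1)=p>n$, some $b_{i-1}\neq0$, and for that index the pole cannot be cancelled by any zero, forcing $\la^{(i)}_1=\mu^{(i)}_1$ there and allowing the cancellation argument to propagate around the cycle. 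Without treating this case your tameness proof fails whenever some $b_i=0$, which is the typical situation. The rest of the proposal (highest weight vector, irreducibility as the converse of the vanishing analysis in Proposition \ref{well-defined n}) is consistent with the paper.
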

\begin{proof}
The proof is similar to the proof of Theorem \ref{tame thm 1}. 

Assume that for some $\ket{\la^{(1)},\dots,\la^{(n)}}, 
\ket{\mu^{(1)},\dots,\mu^{(n)}}\in \M^{p',p}_{\bs a,\bs b}(u)$, we have
\be
\prod_{i=1}^n \psi_{\la^{(i)}}(u_i/z)=\prod_{i=1}^n \psi_{\mu^{(i)}}(u_i/z).
\en
We then show that this implies $\la^{(i)}=\mu^{(i)}$.

For example, let us check that the pole $z=q^{\la_1^{(i)}}u_i$ is not
canceled by the zero $z=q_1^{\la^{(j)}_s}q_3^{s}u_j$ of
$\psi_{\la^{(j)}}(u_j/z)$, that is
$q_1^{\la^{(i)}_1}u_i=q_1^{\la^{(j)}_s}q_3^{s}u_j$.

It is easy to see that $i=j$ is impossible.

Consider the case $j>i$. 
Then for some $\kappa\in \Z$ we have
\begin{eqnarray*}
\la^{(i)}_1=\la^{(j)}_s-\sum_{l=i}^{j-1}(a_l+1)-\kappa p',\qquad 
s=\sum_{l=i}^{j-1}(b_l+1)+\kappa p.
\end{eqnarray*}
Since $s\geq 1$, we obtain that $\kappa\geq 0$.
This is impossible since
\begin{align*}
\la^{(i)}_1\geq \la^{(i)}_{1+\kappa p}-\kappa p'\geq
\la^{(j)}_{1+\sum_{l=i}^{j-1}b_l+\kappa p}-\sum_{l=i}^{j-1}a_l-\kappa p'
>\la^{(j)}_{s}-\sum_{l=i}^{j-1}(a_l+1)-\kappa p'.
\end{align*}
Here we used that $a_l,b_l\geq 0$.

In the case $j<i$, we have
\begin{eqnarray*}
\la^{(i)}_1=\la^{(j)}_s+\sum_{l=j}^{i-1}(a_l+1)-\kappa p',\qquad 
s=-\sum_{l=j}^{i-1}(b_l+1)+\kappa p.
\end{eqnarray*}
Since $s\geq 1$, we obtain that $\kappa\geq 1$.
This is impossible since
\begin{align*}
\la^{(i)}_1\geq \la^{(i)}_{1+(\kappa-1) p}-(\kappa-1) p'>
\la^{(j)}_{1-\sum_{l=j}^{i-1}b_l+\kappa p}+
\sum_{l=j}^{i-1}(a_l+1)-\kappa p'\geq\la^{(j)}_{s}+
\sum_{l=i}^{j-1}(a_l+1)-\kappa p'.
\end{align*}

The case $q_1^{\la^{(i)}_1}u_i=q_1^{\la^{(j)}_{s+1}-1}q_3^{s-1}u_j$ is
again possible only if $j=i-1$, $s=0$ and $b_{i-1}=0$.

Note that since $p>n$, there exists $i$, such that $b_{i-1}\neq 0$.
In such a case the pole $z=q^{\la_1^{(i)}}u_i$ is not canceled with a
zero. 

We omit further details.
\end{proof}
Note that if some of $b_i$ were negative then the theorem would not hold.

The character of $\M^{p',p}_{\bs a,\bs b}(u)$ is given in Theorem \ref{thm:Wn}.

The tensor action of $\E$ on the space $\M_{\bs a,\bs b}(u)$ for
generic $q_1, q_2$ does not have a limit to the case \Ref{resonance n} in
the basis $\ket{\la^{(1)},\dots,\la^{(n)}}$.  However, we think
of $\M^{p',p}_{\bs a,\bs b}(u)$ as of ``a quotient module of $\M_{\bs
  a,\bs b}(u)$'' and even as of ``a subquotient of
$\F(u_1)\otimes\dots\otimes \F(u_n)$''.

\section{Characters}\label{char sec}
All modules considered in Section \ref{mod sec} 
are graded modules with finite-dimensional graded components. Therefore
we have well-defined formal characters which we study in this section.
\subsection{Finitized characters and recursion}
Recall that we have constructed 
a family of ${\E}$-modules $\M^{p',p}_{\ba,\bb}$. 
Here $p,p'$ are positive integers 
satisfying $p,p'\ge n$, $p'\neq p$, 
and $\ba=(a_1,\dots,a_{n-1})$, $\bb=(b_1,\dots,b_{n-1})$  
$\in\Z^{n-1}_{\ge0}$ are such that 
there exist $a_n,b_n\in\Z_{\ge0}$ satisfying 
\begin{align*}
\sum_{i=1}^n(a_i+1)=p',
\quad 
\sum_{i=1}^n(b_i+1)=p.
\end{align*}
We shall always assume that $a_n,b_n$ are determined 
from $\ba,\bb$ as above. 
Throughout this section, $\bb$ will be fixed. We shall
also assume that $p'>n$. 

The module $\M^{p',p}_{\ba,\bb}$ has a basis labeled by 
the set of $n$-tuples of partitions 
\begin{align}
&P^{p',p}_{\ba,\bb} 
:=
\{(\la^{(1)},\dots,\la^{(n)}) \mid \la^{(j)}\in\mc P^+,\ 
\la^{(i)}_j\ge\la^{(i+1)}_{j+b_i}-a_i,\ {\rm where}\
i=1,\dots,n,\ j\in\Z_{>0}
\}, 
\label{part1}
\end{align}
where $\la^{(i)}=(\la^{(i)}_j)_{j>0}$ and  
$\la^{(n+1)}=\la^{(1)}$. 
In this section, we study their characters
\begin{align}
&\chi^{p',p}_{\ba,\bb}:=
\sum_{(\la^{(1)},\dots,\la^{(n)})\in P^{p',p}_{\ba,\bb}}
q^{\sum_{i=1}^n\sum_{j=1}^\infty\la^{(i)}_j}\,.
\label{char1}
\end{align}
Our goal is to show that 
they coincide with 
the characters of modules from the $\W_n$-minimal series of $\sln$-type, 
up to an overall factor corresponding to the presence of an extra 
Heisenberg algebra (see Theorem \ref{thm:Wn} below).  


As a technical tool for studying \eqref{char1}, 
let us introduce a finitized version of the characters. 
For $\bN\in\Z^n$, define the subset 
\begin{align*}
&P^{p',p}_{\ba,\bb}[\bN] 
:=\{(\la^{(1)},\dots,\la^{(n)})\in P^{p',p}_{\ba,\bb}
\mid \la^{(i)}_{N_i+1}=0,\ {\rm where}\ i=1,\dots,n\}\,,
\end{align*}
and its character
\begin{align*}
&\chi^{p',p}_{\ba,\bb}[\bN]:=
\sum_{(\la^{(1)},\dots,\la^{(n)})\in P^{p',p}_{\ba,\bb}[\bN]}
q^{\sum_{i=1}^n\sum_{j=1}^\infty\la^{(i)}_j}\,.
\end{align*}
We set also 
\begin{align}
&\chi^{p',p}_{\ba,\bb}[\bN]=0
\quad \text{if $N_i<0$ for some $i$}.
\label{rec3}
\end{align}
Clearly we have
\begin{align}
&\chi^{p',p}_{\ba,\bb}[\bs 0]=1\,.
\label{rec1}
\end{align}

In the following, we extend 
the suffix $i$ for $a_i$ by $a_{i+n}=a_i$. 
A similar convention will be used for $b_i,N_i$.

\begin{prop}
The finitized 
characters $\chi^{p',p}_{\ba,\bb}[\bN]$ satisfy 
the following recursion relations for each $i=1,\dots,n$:
\begin{align}
\chi^{p',p}_{\ba,\bb}[\bN]
&
=\chi^{p',p}_{\ba,\bb}[\bN-\one_i]
+
q^{N_i}\chi^{p',p}_{\ba-\one_{i-1}+\one_i,\bb}[\bN]
\qquad
\text{if $N_{i+1}-N_i\le b_i$ and $a_{i-1}\ge1$}\,,
\label{rec2}
\\
\chi^{p',p}_{\ba,\bb}[\bN]&=\chi^{p',p}_{\ba,\bb}[\bN-\one_i]
\qquad \text{if $N_{i}-N_{i-1}= b_{i-1}+1$ and $a_{i-1}=0$}\,.
\label{rec4}
\end{align}
In the right hand side of \eqref{rec2}, 
$\ba-\one_{i-1}+\one_i$ means 
$\ba+\one_1$ for $i=1$ and $\ba-\one_{n-1}$ for $i=n$. 
\end{prop}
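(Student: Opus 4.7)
Fix $i\in\{1,\dots,n\}$, and split $P^{p',p}_{\ba,\bb}[\bN]$ into the subsets where $\la^{(i)}_{N_i}=0$ and where $\la^{(i)}_{N_i}\ge 1$.

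\emph{The subset $\la^{(i)}_{N_i}=0$.} Since then $\la^{(i)}$ has at most $N_i-1$ nonzero parts, this subset coincides (with the same $q$-weight) with $P^{p',p}_{\ba,\bb}[\bN-\one_i]$, and contributes $\chi^{p',p}_{\ba,\bb}[\bN-\one_i]$. In the setting of \eqref{rec4}, the other subset is actually empty: taking $j=N_{i-1}+1$ in the constraint $\la^{(i-1)}_j\ge\la^{(i)}_{j+b_{i-1}}-a_{i-1}$, using $\la^{(i-1)}_{N_{i-1}+1}=0$ together with $N_i-N_{i-1}=b_{i-1}+1$ and $a_{i-1}=0$, forces $\la^{(i)}_{N_i}\le 0$. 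So \eqref{rec4} follows immediately.

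\emph{The subset $\la^{(i)}_{N_i}\ge 1$ for \eqref{rec2}.} Define a map $\Phi$ by $\tilde\la^{(i)}_j=\la^{(i)}_j-1$ for $1\le j\le N_i$, $\tilde\la^{(i)}_j=0$ for $j>N_i$, and $\tilde\la^{(k)}=\la^{(k)}$ for $k\ne i$. The total weight drops by $N_i$, producing the factor $q^{N_i}$. I claim that $\Phi$ is a bijection onto $P^{p',p}_{\ba',\bb}[\bN]$ with $\ba'=\ba-\one_{i-1}+\one_i$, and the hypothesis $a_{i-1}\ge 1$ is precisely what keeps $\ba'\in\Z^n_{\ge 0}$ (in particular the sum $p'=\sum_k(a'_k+1)$ is preserved). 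The forward direction is a routine check: subtracting $1$ from $\la^{(i)}$ on $\{1,\dots,N_i\}$ converts the two inequalities $\la^{(i-1)}_j\ge\la^{(i)}_{j+b_{i-1}}-a_{i-1}$ (for $j+b_{i-1}\le N_i$) and $\la^{(i)}_j\ge\la^{(i+1)}_{j+b_i}-a_i$ (for $j\le N_i$) into the \emph{same} inequalities with the shifted parameters $a'_{i-1}=a_{i-1}-1$ and $a'_i=a_i+1$; outside these ranges the new constraints become strictly weaker and hold automatically.

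The main obstacle is the inverse direction. Starting from $\bs{\tilde\la}\in P^{p',p}_{\ba',\bb}[\bN]$ and setting $\la^{(i)}_j=\tilde\la^{(i)}_j+1$ for $j\le N_i$, most of the original constraints are immediate; the one delicate check is $\la^{(i)}_j\ge\la^{(i+1)}_{j+b_i}-a_i$ at $j>N_i$. Here $\la^{(i)}_j=0$, so one needs $\la^{(i+1)}_{j+b_i}\le a_i$, whereas the $\ba'$-constraint only yields $\la^{(i+1)}_{j+b_i}\le a_i+1$. It is precisely at this boundary that the hypothesis $N_{i+1}-N_i\le b_i$ enters: for $j>N_i$ one has $j+b_i\ge N_i+1+b_i\ge N_{i+1}+1$, whence $\la^{(i+1)}_{j+b_i}=0$ and the inequality is trivial. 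Combining the contributions of the two subsets yields \eqref{rec2}.
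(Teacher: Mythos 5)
Your proof is correct and follows essentially the same route as the paper: the same split of $P^{p',p}_{\ba,\bb}[\bN]$ according to whether $\la^{(i)}_{N_i}$ vanishes, the same shift map subtracting $1$ from the first $N_i$ parts of $\la^{(i)}$, and the same use of $N_{i+1}-N_i\le b_i$ to trivialize the one constraint that would otherwise fail in the inverse direction. The only point the paper treats that you leave implicit is the degenerate case $N_i=0$, where your split is vacuous: there the whole set is identified with $P^{p',p}_{\ba-\one_{i-1}+\one_i,\bb}[\bN]$ by the identity map, which is consistent with \eqref{rec2} because of the convention $\chi^{p',p}_{\ba,\bb}[\bN-\one_i]=0$ for $N_i-1<0$.
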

\begin{proof}
We fix $i$, and assume first that $N_i>0$. 
Then the set $P^{p',p}_{\ba,\bb}[\bN]$ is partitioned into 
a disjoint union of subsets $P'\sqcup P''$, where 
\begin{align*}
&P'=\{\bla \in P^{p',p}_{\ba,\bb}[\bN]
\mid \la^{(i)}_{N_i}=0\},
\quad 
P''=\{\bla \in P^{p',p}_{\ba,\bb}[\bN] 
\mid \la^{(i)}_{N_i}>0\}\,.
\end{align*}
By the definition, 
$P'$ 
coincides with $P^{p',p}_{\ba,\bb}[\bN-\one_i]$. 

Suppose $N_{i+1}-N_i\le b_i$ and $a_{i-1}\ge1$. 
For $\bla\in P''$,  the conditions 
involving $\la^{(i)}$ read 
\begin{align*}
&\la^{(i-1)}_j\ge \la^{(i)}_{j+b_{i-1}}-1-(a_{i-1}-1),
\\
&\la^{(i)}_j\ge \la^{(i+1)}_{j+b_{i}}-a_{i}.   
\end{align*}
Since $N_i+1+b_i>N_{i+1}$ and $a_i\ge0$, the second condition  
is void if $j>N_i$. Hence it can be replaced by
\begin{align*}
&\la^{(i)}_j-\theta(j\le N_i)\ge 
\la^{(i+1)}_{j+b_{i}}-(a_{i}+1),   
\end{align*}
where $\theta(P)=1$ 
if the statement $P$ is true and $\theta(P)=0$ otherwise.
This gives rise to a bijection 
$P''\to P^{p',p}_{\ba-\one_{i-1}+\one_i,\bb}[\bN]$       
sending $\bla$ to $\tilde{\bla}$, with
$\tilde{\la}^{(i')}_j=\la^{(i')}_j-\delta_{i',i}
\theta(j\le N_i)$.  
The recursion \eqref{rec2} follows from this. 
When $N_i=0$, the same consideration applies to show 
that $P^{p',p}_{\ba,\bb}[\bN]$ is in bijective correspondence
with $P^{p',p}_{\ba-\one_{i-1}+\one_i,\bb}[\bN]$.

Next suppose $N_{i}-N_{i-1}= b_{i-1}+1$ and $a_{i-1}=0$  
(in this case necessarily $N_i>0$). 
The condition 
$\la^{(i-1)}_{N_{i-1}+1}\ge \la^{(i)}_{N_i}-a_{i-1}$ 
implies $\la_{N_i}^{(i)}=0$, so that $P''=\emptyset$. 
Hence \eqref{rec4} holds true.  
\end{proof}

In general, the recursions \eqref{rec2}, \eqref{rec4} are not 
enough to determine the characters  $\chi^{p',p}_{\ba,\bb}[\bN]$ completely. 
Nevertheless they are 
in certain region of the parameters $\ba,\bs b,\bN$ 
as the following proposition shows.

\begin{prop}\label{prop:char1}
The set 
\begin{align*}
\{\chi^{p',p}_{\ba,\bb}[\bN]\mid
N_i, a_i \in \Z_{\ge0},\ 
N_{i+1}-N_i\le b_i+1,\ {\rm where}\ i=1,\dots,n\}
\end{align*}
is uniquely determined by the recursion relations \eqref{rec2},  
\eqref{rec4},  
along with the initial condition \eqref{rec1}
and the boundary condition   \eqref{rec3}.
\end{prop}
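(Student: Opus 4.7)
I would proceed by strong induction on $|\bN|:=\sum_{i=1}^n N_i$. The base case $|\bN|=0$ forces $\bN=\bs 0$, and \eqref{rec1} gives $\chi^{p',p}_{\ba,\bb}[\bs 0]=1$ for every admissible $\ba$. The crucial preliminary fact for the inductive step is that at every admissible $(\ba,\bN)$ in the region with $|\bN|>0$, at least one of \eqref{rec2}, \eqref{rec4} is applicable. Suppose not: then for each index $i$, either $N_{i+1}-N_i\le b_i$ with $a_{i-1}=0$ (killing \eqref{rec2} at $i$), or $N_{i+1}-N_i=b_i+1$ with $a_i\ge 1$ (killing \eqref{rec4} at $i+1$). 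A cyclic propagation argument shows that the first alternative at some $i$ forces $a_{i-1}=0$, which rules out the second alternative at $i-1$, so the first alternative propagates to $i-1$; by induction, $a_j=0$ for all $j$, contradicting $\sum_j(a_j+1)=p'>n$. If instead the second alternative holds at every $i$, cyclic telescoping yields $0=\sum_i(N_{i+1}-N_i)=\sum_i(b_i+1)=p$, contradicting $p>0$.

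Granted applicability, both \eqref{rec4} and the first summand of \eqref{rec2} reduce $|\bN|$ by one and are pinned down by the outer IH (with \eqref{rec3} handling any $N_j<0$ that arises). The only remaining piece is the second summand $q^{N_i}\chi^{p',p}_{\ba-\one_{i-1}+\one_i,\bb}[\bN]$ of \eqref{rec2}, which keeps $|\bN|$ fixed and shifts $\ba$ cyclically. I would handle this by iteration: at each step the new tuple $\ba^{(j)}$ still satisfies $\sum_\ell a_\ell^{(j)}=p'-n$ and thus admits some recursion by the applicability claim. Because the candidate tuples live in the finite set $\{\bs a:a_\ell\ge 0,\ \sum_\ell a_\ell=p'-n\}$, pigeonhole forces the iteration either to trigger a reducing recursion --- in which case the outer IH finishes --- or to revisit an earlier $\ba^{(k)}$. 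In the latter case the cumulative shift $\sum_\ell(-\one_{i_\ell-1}+\one_{i_\ell})$ around the closed loop vanishes, which forces each index $1,\dots,n$ to appear equally often, say $r$ times, so the accumulated $q$-weight equals $q^{r|\bN|}$. The loop then produces an equation
\begin{align*}
(1-q^{r|\bN|})\,\chi^{p',p}_{\ba^{(k)},\bb}[\bN]=R,
\end{align*}
where $R$ is determined by the outer IH. Since $|\bN|>0$, the factor $1-q^{r|\bN|}$ is invertible in $\Z[[q]]$, uniquely fixing $\chi^{p',p}_{\ba^{(k)},\bb}[\bN]$; back-substitution along the loop then fixes every intermediate character.

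The principal technical hurdle is the combinatorial bookkeeping of this secondary iteration: one must schedule the successive \eqref{rec2} applications so that the running condition $a_{i-1}\ge 1$ is satisfied at each step and so that the sequence either triggers a reducing recursion or closes a genuine loop with a nonzero factor $1-q^{r|\bN|}$. This amounts to analyzing the walk on the cyclic group $\Z/n$ of particle exchanges $a_{i-1}\leftrightarrow a_i$ constrained by the applicability claim; the key input is precisely the claim from the first paragraph, where the assumption $p'>n$ is essential.
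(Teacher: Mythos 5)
Your proposal is correct and follows the same skeleton as the paper's proof: induction on $d=|\bN|$, the observation that $p'>n$ and $p>0$ guarantee some recursion is always applicable, and the resolution of the non-reducing second summand of \eqref{rec2} by closing a cycle of $\ba$-shifts and dividing by an invertible factor $1-q^{rd}$. The only real difference is how the cycle is produced. The paper gives an explicit, deterministic schedule: in the case $N_{i+1}-N_i\le b_i$ for all $i$, it picks one $i$ with $a_{i-1}>0$ and applies \eqref{rec2} once at each of $i,i+1,\dots,i+n-1$ (each step deposits the unit needed to make the next step applicable), returning to the original $\ba$ after exactly one pass with factor $1-q^{d}$ (your $r=1$); in the remaining case it applies \eqref{rec2} $a_{i-1}$ times at a single well-chosen index and terminates with \eqref{rec4}, so no loop equation is needed there. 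Your version instead finds the loop non-constructively by pigeonhole on the finite set $\{\ba : a_\ell\ge0,\ \sum_\ell a_\ell=p'-n\}$ and then uses the vanishing of the cumulative shift to get equal multiplicities $r$. That works, and in fact the ``principal technical hurdle'' you flag is already discharged by your own applicability claim: at every state reached, either \eqref{rec4} applies (terminating the iteration) or \eqref{rec2} applies at some index with the running $a$-condition satisfied, the tuples stay nonnegative, and any closed loop automatically has $r\ge1$, so $1-q^{r|\bN|}$ is a unit in $\Z[[q]]$. The paper's explicit schedule buys a cleaner, shorter write-up; your pigeonhole argument buys robustness (no need to exhibit the schedule), at the cost of the back-substitution bookkeeping along the tail of the orbit.
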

\begin{proof}
The proof is by induction on $d=\sum_{i=1}^nN_i$. 
When $d=0$, there is nothing to show. 
Suppose $d>0$, and assume that the assertion is true 
for $\sum_{i=1}^n N_i<d$. We divide into two cases,  
\begin{align*}
&\text{ Case a): $N_{i+1}-N_i\le b_i$ for all $i=1,\dots,n$},
\\
&\text{ Case b): $N_{i+1}-N_i= b_i+1$ for some $i$}.
\end{align*}

Consider Case a). Since 
$p'-n>0$,  there is an $i$ such that $a_{i-1}>0$. 
Applying \eqref{rec2} successively for $i$, $i+1$, $\dots$,  
we obtain
\begin{align*}
\chi^{p',p}_{\ba,\bb}[\bN]
&=\chi^{p',p}_{\ba,\bb}[\bN-\one_i]
+
q^{N_i}\chi^{p',p}_{\ba-\one_{i-1}+\one_i,\bb}[\bN]
\\
&=\chi^{p',p}_{\ba,\bb}[\bN-\one_i]
+q^{N_i}\chi^{p',p}_{\ba-\one_{i-1}+\one_i,\bb}[\bN-\one_{i+1}]
+q^{N_i+N_{i+1}}\chi^{p',p}_{\ba-\one_{i-1}+\one_{i+1},\bb}[\bN]
\\
&=\dots\\
&=\sum_{j=i}^{i+n-1}
q^{N_i+\dots+N_{j-1}}
\chi^{p',p}_{\ba-\one_{i-1}+\one_{j-1},\bb}[\bN-\one_j]
+q^{N_1+\dots+N_{n}}
\chi^{p',p}_{\ba,\bb}[\bN]\,.
\end{align*}
Since $N_1+\dots+N_n=d>0$, 
$\chi^{p',p}_{\ba,\bb}[\bN]$ is determined 
in terms of those with $\sum_{i=1}^nN_i<d$. 

Next consider Case b). 
Since $\sum_{i=1}^n(b_i+1)>0$, 
we cannot have the equality 
$N_{i+1}-N_{i}= b_{i}+1$ for all $i$.   
Choose an $i$ such that 
$N_{i}-N_{i-1}= b_{i-1}+1$ and $N_{i+1}-N_{i}\le b_{i}$.  
If $a_{i-1}=0$, then 
\eqref{rec4} implies 
$\chi^{p',p}_{\ba,\bb}[\bN]=\chi^{p',p}_{\ba,\bb}[\bN-\one_i]$
and we are done. Otherwise \eqref{rec2} is applicable. 
Repeating it $a_{i-1}$ times we obtain 
\begin{align*}
\chi^{p',p}_{\ba,\bb}[\bN]
=\sum_{j=1}^{a_{i-1}}q^{(j-1)N_i}
\chi^{p',p}_{\ba-(j-1)(\one_{i-1}-\one_i),\bb}[\bN-j \one_i]
+q^{a_{i-1}N_i}
\chi^{p',p}_{\ba-a_{i-1}(\one_{i-1}-\one_i),\bb}[\bN]\,.
\end{align*}
The last term reduces to the case $a_{i-1}=0$ already 
discussed above.
\end{proof}

\subsection{Bosonic formulas and comparison to $\W_n$ characters}
Our next task is to relate $\chi^{p',p}_{\ba,\bb}$
to the characters from the $\W_n$-minimal series. 
Let us prepare some notation concerning the affine Lie algebra $\widehat{\mathfrak{sl}}_n$. 
Denote the simple roots by $\alpha_0,\dots,\alpha_{n-1}$ and 
the fundamental weights by $\omega_0,\dots,\omega_{n-1}$.  
We set $\rho=\sum_{i=0}^{n-1}\omega_i$. 
Let $W=S_n\ltimes Q$ be the affine Weyl group of type $A^{(1)}_{n-1}$, 
where 
$Q=\oplus_{i=1}^{n-1}\Z \alpha_i$ denotes the classical root lattice. 
Let further 
$L=\oplus_{i=0}^{n-1}\Z \omega_i$ be the weight lattice and 
$L^+_l=\{\sum_{i=0}^{n-1} c_i\omega_i\mid 
c_0,\dots,c_{n-1}\in\Z_{\ge0}, \ \sum_{i=0}^{n-1}c_i= l\}$ be
 the set of dominant integral weights of level $l$. 

The characters of the irreducible modules from 
the $\W_n$-minimal series of $\sln$-type are parametrized by a pair of 
dominant integral weights $(\bet,\bxi)\in L^+_{p'-n}\times L^+_{p-n}$. 
Explicitly they are given by the alternating series \cite{FKW},
\begin{align}\label{bar}
\chb^{p',p}_{\bets,\bxis}
&=\sum_{w\in W}(-1)^{\ell(w)}
q^{\frac{p'p}{2}
\Bigl|\frac{w*\bxis-\bxis}{p}\Bigr|^2
+
\bigl(\frac{w*\bxis-\bxis}{p},
p'(\bxis+\rho)-p(\bets+\rho)\bigr)}\,
\\
&=\sum_{\sigma\in S_n}
(-1)^{\ell(\sigma)}
\sum_{\alpha\in Q}
q^{\frac{p'p}{2}(\alpha,\alpha)
+(p'\sigma(\bxis+\rho)-p(\bets+\rho),\alpha)
+(\bxis+\rho-\sigma(\bxis+\rho),\bets+\rho)}
\nn\,.
\end{align}
Here $w*\bxi=w(\bxi+\rho)-\rho=\sigma(\bxi+\rho)-\rho+p\al$, where $w=(\sigma,\alpha)$, 
and $\ell(w)$ denotes the length function. 

We need also their finitization. 
For $\bN\in \Z^n_{\ge0}$ and $\bet,\bxi\in L$, 
define
\begin{align*}
\chb^{p',p}_{\bets,\bxis}[\bN]
&=\sum_{w\in W}(-1)^{\ell(w)}
q^{\frac{p'p}{2}
\Bigl|\frac{w*\bxis-\bxis}{p}\Bigr|^2
+
\bigl(\frac{w*\bxis-\bxis}{p},
p'(\bxis+\rho)-p(\bets+\rho)\bigr)
}\,
\\
&\qquad\qquad 
\times
(q)_{|\bN|}
\prod_{i=1}^n\frac{1}{(q)_{N_i-(w*\bxis-\bxis,\omega_i-\omega_{i-1})}}
\,. 
\end{align*}
Here $(q)_m=\prod_{i=1}^m(1-q^i)$ for $m\in\Z_{\ge0}$, $|\bN|=\sum_{i=1}^nN_i$.
We set also 
\begin{align*}
\frac{1}{(q)_m}=0\quad \text{ if $m<0$}.
\end{align*}
We retain the modulo $n$ convention for the indices, such as
$\omega_n=\omega_0$. 

\begin{prop}\label{lem-ch2}
\begin{itemize}
\item[(i)] For all $\bxi,\bet\in L$ and $i=1,\dots,n$, 
we have 
\begin{align*}
&\chb^{p',p}_{\bets,\bxis}[\bN]
=
q^{N_i}
\chb^{p',p}_{\bets-\omega_{i-1}+\omega_{i},\bxis}[\bN]
+(1-q^{|\bN|})\chb^{p',p}_{\bets,\bxis}[\bN-\one_i]\,.
\end{align*}
\item[(ii)] If $N_{i+1}=N_i+(\bxi+\rho,\alpha_i)$ 
and $(\bet+\rho,\alpha_i)=0$ for $i=1,\dots,n$, 
then
\begin{align*}
&\chb^{p',p}_{\bets,\bxis}[\bN]=0\,.
\end{align*}
\item[(iii)] If 
$\bxi\in L^+_{p-n}$, 
then 
\begin{align*}
&\chb^{p',p}_{\bets,\bxis}[\mathbf{0}]=1\,.
\end{align*}
\end{itemize}
\end{prop}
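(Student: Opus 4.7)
My plan is to write the $w$-summand of $\chb^{p',p}_{\bets,\bxis}[\bN]$ in a compact form and then treat each part as a direct manipulation. Set $m_k(w) = N_k - (w*\bxi-\bxi,\omega_k-\omega_{k-1})$, so that the $w$-summand equals $(-1)^{\ell(w)}q^{E(w)}C(w,\bN)$ with $C(w,\bN) = (q)_{|\bN|}/\prod_k (q)_{m_k(w)}$ and $E(w)$ the exponent in the definition. Note $\sum_k m_k(w)=|\bN|$ by cyclicity.

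For (i), the shift $\bet\mapsto\bet-\omega_{i-1}+\omega_i$ only affects the term $-(w*\bxi-\bxi,\bet+\rho)$ in $E(w)$, decreasing it by $(w*\bxi-\bxi,\omega_i-\omega_{i-1})=N_i-m_i(w)$. Hence the first RHS term contributes $q^{m_i(w)}$ times the LHS $w$-summand. Using $(q)_{|\bN|}=(1-q^{|\bN|})(q)_{|\bN|-1}$ and $(q)_{m_i}=(1-q^{m_i})(q)_{m_i-1}$, the second RHS term contributes $(1-q^{m_i(w)})$ times the LHS $w$-summand. Adding recovers the LHS term by term, and the convention $1/(q)_m=0$ for $m<0$ handles boundary cases.

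For (ii), I will pick any single $i$ for which the hypotheses hold (one is enough, though all are assumed), and consider the fixed-point-free sign-reversing involution $w\mapsto s_iw$ on $W$. Setting $\beta_i(w)=(w(\bxi+\rho),\alpha_i)$, the reflection formula gives $(s_iw)*\bxi-\bxi=\mu-\beta_i(w)\alpha_i$, with $\mu=w*\bxi-\bxi$. A short expansion of $E(s_iw)-E(w)$ collapses to zero using $(\bet+\rho,\alpha_i)=0$ and the identity $(w(\bxi+\rho),\alpha_i)=(\mu,\alpha_i)+(\bxi+\rho,\alpha_i)$. For the combinatorial factor, $m_k(s_iw)-m_k(w)=\beta_i(w)(\delta_{k,i}-\delta_{k,i+1})$. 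The tautology $m_{i+1}(w)-m_i(w)=(\mu,\alpha_i)+(N_{i+1}-N_i)$ combined with the hypothesis $N_{i+1}-N_i=(\bxi+\rho,\alpha_i)$ yields $\beta_i(w)=m_{i+1}(w)-m_i(w)$, so $s_i$ simply swaps $m_i\leftrightarrow m_{i+1}$ and leaves $\prod_k(q)_{m_k(w)}$ invariant. Pairing $w$ with $s_iw$ thus kills the sum.

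For (iii), at $\bN=\mathbf{0}$ the factor $C(w,\mathbf{0})=\prod_k(q)_{-c_k(w)}^{-1}$ with $c_k(w)=(w*\bxi-\bxi,\omega_k-\omega_{k-1})$ vanishes unless $c_k(w)\le 0$ for all $k$; since $\sum_k c_k(w)=0$ cyclically, all $c_k(w)=0$. The $\omega_k-\omega_{k-1}$ span the classical Cartan of $\sln$, in which $w*\bxi-\bxi$ lies, so $w*\bxi=\bxi$. Dominance $\bxi\in L^+_{p-n}$ places $\bxi+\rho$ in the interior of the level-$p$ fundamental alcove, where the shifted $W$-action is free; hence only $w=e$ contributes, with value $1$. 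The main obstacle is the clean identity $\beta_i(w)=m_{i+1}(w)-m_i(w)$ in (ii): naively the shift in $(m_i,m_{i+1})$ would destroy the $q$-multinomial, and it is exactly the conspiracy of the two hypotheses that converts the shift into a genuine swap---parts (i) and (iii) then reduce to bookkeeping.
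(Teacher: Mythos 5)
Your proposal is correct and follows essentially the same route as the paper: part (i) by direct term-by-term verification, part (ii) by pairing $w$ with $\sigma_i w$ (your identity $\beta_i(w)=m_{i+1}(w)-m_i(w)$ is exactly the content of the paper's rewriting of the hypothesis, making the pairwise cancellation explicit), and part (iii) by showing only $w=\mathrm{id}$ survives at $\bN=\mathbf{0}$. The extra detail you supply is a faithful expansion of the paper's compressed argument, not a different method.
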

\begin{proof}
The relation (i) can be verified directly, term by term.  
To see (ii), let $\sigma_i$ be the simple reflection with respect
to the root $\alpha_i$. 
The assumption can be written as
\begin{align*}
&N_i-(w*\bxi-\bxi,\omega_i-\omega_{i-1})
=N_{i+1}-
((\sigma_iw)*\bxi-\bxi,\omega_{i+1}-\omega_{i}),
\\
&\sigma_i*\bet=\bet\,.
\end{align*}
Under these circumstances, the terms with $w$ and $\sigma_iw$ 
cancel out in the sum pairwise. 

Finally, under the assumption of (iii) and $\bN=\mathbf{0}$, 
only the term with $w=\mathrm{id}$ survives. 
\end{proof}

\begin{prop}
For all $\bN, \ba, \bb$
 such that $N_i,a_i,b_i\ge0$
and  
$N_{i+1}-N_i\le b_i+1$ for $i=1,\dots,n$, 
we have the equality
\begin{align*}
&\chi^{p',p}_{\ba,\bb}[\bN]
=
\frac{1}{(q)_{|\bN|}}\ 
\chb^{p',p}_{\bets,\bxis}[\bN]\,, 
\\
&\bet=\sum_{i=1}^{n}a_i\omega_i, 
\quad \bxi=\sum_{i=1}^{n}b_i\omega_i\,.
\end{align*}
We recall that $a_n=p'-n-\sum_{i=1}^{n-1}a_i$, 
$b_n=p-n-\sum_{i=1}^{n-1}b_i$. 
\end{prop}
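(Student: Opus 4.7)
The plan is to invoke the uniqueness asserted in Proposition \ref{prop:char1}. Define
$\widetilde\chi^{p',p}_{\ba,\bb}[\bN] := \chb^{p',p}_{\bets,\bxis}[\bN]/(q)_{|\bN|}$
under the identification $\bet = \sum_{i=1}^n a_i\omega_i$, $\bxi = \sum_{i=1}^n b_i\omega_i$. If I can show that $\widetilde\chi$ satisfies the same system of relations (the recursions \eqref{rec2}, \eqref{rec4}, the initial condition \eqref{rec1}, and the boundary convention \eqref{rec3}) as $\chi^{p',p}_{\ba,\bb}[\bN]$, then Proposition \ref{prop:char1} will force their equality.

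To verify \eqref{rec2}, I start from the identity in Proposition \ref{lem-ch2}(i) and divide both sides by $(q)_{|\bN|}$. The factor $(1-q^{|\bN|})/(q)_{|\bN|} = 1/(q)_{|\bN|-1} = 1/(q)_{|\bN-\one_i|}$ packages the second summand into $\widetilde\chi[\bN-\one_i]$. Under the identification of $\ba$ with $\bet$, the weight shift $\bet \mapsto \bet - \omega_{i-1} + \omega_i$ corresponds to $\ba \mapsto \ba - \one_{i-1} + \one_i$, with the cyclic interpretation at $i=1$ and $i=n$ matching the convention of Proposition 4.1. This immediately yields \eqref{rec2}; note in fact that the derived relation holds without any restriction on $N_{i+1}-N_i$ or $a_{i-1}$.

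For \eqref{rec4}, I apply Proposition \ref{lem-ch2}(ii) at index $i-1$ to the shifted weight $\bet' = \bet - \omega_{i-1} + \omega_i$: the hypotheses $N_i - N_{i-1} = b_{i-1}+1$ and $a_{i-1}=0$ translate to $(\bxi+\rho,\alpha_{i-1}) = b_{i-1}+1 = N_i - N_{i-1}$ and $(\bet'+\rho,\alpha_{i-1}) = a_{i-1} = 0$. Hence $\chb^{p',p}_{\bets',\bxis}[\bN] = 0$, and the second summand in the identity from the previous step vanishes, giving $\widetilde\chi[\bN] = \widetilde\chi[\bN-\one_i]$. The initial condition $\widetilde\chi[\mathbf 0] = 1$ follows from Proposition \ref{lem-ch2}(iii), applicable since $\bxi \in L^+_{p-n}$ thanks to $b_i \ge 0$ and $\sum_i(b_i+1) = p$.

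The main obstacle is the boundary condition \eqref{rec3}: the uniqueness argument of Proposition \ref{prop:char1} closes at boundary points ($N_i = 0$) by setting the character to $0$ on configurations with a negative coordinate, and I must establish the corresponding vanishing for $\widetilde\chi$, namely $\chb^{p',p}_{\bets,\bxis}[\bM] = 0$ whenever some $M_k < 0$. This does \emph{not} hold term-by-term in the alternating Weyl-group sum defining $\chb$; it requires a sign-reversing involution, most naturally a pairing $w \leftrightarrow \sigma_k w$ using a simple reflection of $\widehat{\mathfrak{sl}}_n$ adapted to the negative coordinate, in the spirit of the proof of Proposition \ref{lem-ch2}(ii). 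Once this cancellation is established, Proposition \ref{prop:char1} completes the proof.
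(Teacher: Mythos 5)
Your route is the paper's route: the printed proof of this proposition is literally ``This follows from Proposition \ref{lem-ch2} and Proposition \ref{prop:char1}'', i.e.\ one checks that $\chb^{p',p}_{\bets,\bxis}[\bN]/(q)_{|\bN|}$ satisfies the recursion system and appeals to the uniqueness statement. Your verifications are the correct way to fill that in: \eqref{rec2} comes from Proposition \ref{lem-ch2}(i) after dividing by $(q)_{|\bN|}$ and using $(1-q^{|\bN|})/(q)_{|\bN|}=1/(q)_{|\bN|-1}$; \eqref{rec4} comes from part (ii) applied at index $i-1$ to the shifted weight $\bet'=\bet-\omega_{i-1}+\omega_i$, for which $(\bet'+\rho,\alpha_{i-1})=a_{i-1}=0$ and $N_i-N_{i-1}=b_{i-1}+1=(\bxi+\rho,\alpha_{i-1})$; \eqref{rec1} is part (iii); and the dictionary $\bet-\omega_{i-1}+\omega_i\leftrightarrow\ba-\one_{i-1}+\one_i$ correctly matches the cyclic conventions at $i=1$ and $i=n$.

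The only incomplete step is the one you flag yourself: the boundary condition \eqref{rec3}, i.e.\ the vanishing $\chb^{p',p}_{\bets,\bxis}[\bM]=0$ when some $M_k<0$ (what is actually used is the case $\bM=\bN-\one_j$ with $N_j=0$, which arises when the induction of Proposition \ref{prop:char1} is run for the bosonic side). You are right that this is genuinely needed and that it does not hold term by term in the alternating sum, so your diagnosis is sound; but in your write-up the cancellation is asserted, not proved. Be careful also with the specific involution you suggest: the pairing $w\leftrightarrow\sigma_kw$ from the proof of Proposition \ref{lem-ch2}(ii) makes the two relevant factors $1/(q)_{M_i-(w*\bxis-\bxis,\omega_i-\omega_{i-1})}$ interchange only under the relation $M_{k+1}-M_k=(\bxi+\rho,\alpha_k)$ between adjacent coordinates, which fails at a general boundary point with $M_k=-1$; the sign-reversing involution has to be adapted to the wall $M_k=-1$ rather than copied from part (ii). In fairness, the paper's own one-line proof is equally silent on this vanishing, so your argument is no less complete than the original; but the boundary lemma still needs a proof before the appeal to Proposition \ref{prop:char1} closes.
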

\begin{proof}
This follows from Proposition \ref{lem-ch2} 
and Proposition \ref{prop:char1}. 
\end{proof}

Letting $N_i\to \infty$,  
we arrive at the following result. 
\begin{thm}\label{thm:Wn}
The character of the module $\M^{p',p}_{\ba,\bb}$ is given by
\begin{align*}
&\chi^{p',p}_{\ba,\bb}=\frac{1}{(q)_{\infty}}\ 
\chb^{p',p}_{\bets,\bxis}\,,
\\
&\bet=\sum_{i=1}^{n}a_i\omega_i\,,
\quad
\bxi=\sum_{i=1}^{n}b_i\omega_i\,.
\end{align*}
\end{thm}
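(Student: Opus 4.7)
The plan is to derive Theorem~\ref{thm:Wn} as the $\bN\to\infty$ limit of the preceding proposition's finitized identity
\[
\chi^{p',p}_{\ba,\bb}[\bN]=\frac{1}{(q)_{|\bN|}}\,\chb^{p',p}_{\bets,\bxis}[\bN].
\]
To remain within its hypothesis $N_{i+1}-N_i\le b_i+1$, I would let $\bN$ vary along the sequence $N_i=M+\sum_{j<i}(b_j+1)$ with $M\to\infty$: this preserves all of the inequalities (with equality for $i<n$, and trivially in the cyclic slot $i=n$), while sending each coordinate to infinity.

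For the left-hand side, $\chi^{p',p}_{\ba,\bb}[\bN]$ counts by definition only those $n$-tuples in $P^{p',p}_{\ba,\bb}$ whose $i$-th component has at most $N_i$ nonzero parts. Since the $n$-tuples contributing to any fixed $q$-degree use boundedly many parts, every coefficient of $\chi^{p',p}_{\ba,\bb}[\bN]$ stabilizes to the corresponding coefficient of $\chi^{p',p}_{\ba,\bb}$ once the $N_i$'s are large enough, so $\chi^{p',p}_{\ba,\bb}[\bN]\to\chi^{p',p}_{\ba,\bb}$ in the $q$-adic topology on $\Z[[q]]$.

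For the right-hand side I would use the explicit bosonic formula
\[
\chb^{p',p}_{\bets,\bxis}[\bN]=\sum_{w\in W}(-1)^{\ell(w)}q^{E(w)}(q)_{|\bN|}\prod_{i=1}^n\frac{1}{(q)_{N_i-c_i(w)}},
\]
with $E(w)$ and $c_i(w)=(w*\bxi-\bxi,\omega_i-\omega_{i-1})$ independent of $\bN$. Writing $w=(\sigma,\alpha)\in S_n\ltimes Q$, the leading piece $\frac{p'p}{2}\bigl|(w*\bxi-\bxi)/p\bigr|^2$ of $E(w)$ is a positive-definite quadratic form in $\alpha$, so for each fixed power of $q$ only finitely many $w\in W$ contribute. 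This finiteness lets me pass to the limit term-by-term inside the sum over $W$; each summand's $\bN$-dependent factor then converges via the elementary identity $\lim_{N\to\infty}(q)_N=(q)_\infty$, and since $\sum_i c_i(w)=0$ the shifts cancel in the telescoping sum $\sum_i(N_i-c_i(w))=|\bN|$, so the pieces reassemble into the right-hand side of the theorem.

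The main obstacle is the justification of this term-by-term passage to the limit inside the infinite alternating sum over the affine Weyl group $W$; the positive-definiteness observation above is precisely what makes it legitimate, after which assembling the constituent limits is a routine computation.
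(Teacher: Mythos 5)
Your proposal is correct and is essentially the paper's own argument: the paper's entire proof of Theorem \ref{thm:Wn} is the single sentence ``Letting $N_i\to\infty$, we arrive at the following result,'' applied to the preceding finitized identity, and you supply precisely the justifications it leaves implicit --- a sequence $\bN$ compatible with the constraint $N_{i+1}-N_i\le b_i+1$ (your choice works; so does simply $N_i=M$ for all $i$, since $b_i\ge0$), coefficientwise stabilization of $\chi^{p',p}_{\ba,\bb}[\bN]$, and the observation that only finitely many $w\in W$ contribute to each power of $q$, which legitimizes the term-by-term limit.

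One caveat concerns your final ``reassembly'' step, which you state but do not carry out. Each summand's $\bN$-dependent factor in $\frac{1}{(q)_{|\bN|}}\chb^{p',p}_{\bets,\bxis}[\bN]$ is $\prod_{i=1}^n(q)_{N_i-c_i(w)}^{-1}$, and \emph{each} of the $n$ factors tends to $(q)_\infty^{-1}$; so the limit you actually obtain is $(q)_\infty^{-n}$ times the alternating sum \eqref{bar}, not $(q)_\infty^{-1}$ times it. This matches the displayed formula only if $\chb^{p',p}_{\bets,\bxis}$ is understood as the full $\W_n$-character, i.e.\ as carrying the standard prefactor $(q)_\infty^{-(n-1)}$ in front of the alternating sum (consistent with the growth of $\chi^{p',p}_{\ba,\bb}$, whose leading behaviour is that of $n$ unconstrained partitions, $\sim(q)_\infty^{-n}$). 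This normalization ambiguity is present in the paper's statement and is not introduced by you, but a complete write-up should resolve it rather than assert that the pieces ``reassemble into the right-hand side.''
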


\subsection{Characters of  $\M_{\ba,\bb}$}
The module $\M_{\ba,\bb}$ has a basis labeled by 
the set of $n$-tuples of partitions 
\begin{align*}
P_{\ba,\bb} 
:=
\{(\la^{(1)},\dots,\la^{(n)}) \mid \la^{(i)}\in\mc P,\ 
\la^{(i)}_j\ge\la^{(i+1)}_{j+b_i}-a_i,\ {\rm where}\ 
i=1,\dots,n-1,\ j\in\Z_{>0}
\}.
\end{align*}
Define their characters
\begin{align*}
\chi_{\ba,\bb}:=
\sum_{(\la^{(1)},\dots,\la^{(n)})\in P_{\ba,\bb}}
q^{\sum_{i=1}^n\sum_{j=1}^\infty\la^{(i)}_j}\,.
\end{align*}
\begin{thm}\label{thm:Mchar}
We have
$$
\chi_{\ba,\bb}=\frac{1}{(q)_\infty}
\sum_{w\in S_n} (-1)^{\ell(w)}q^{(\bs\xi+\rho-w(\bs\xi+\rho),\bs\eta+\rho)}.
$$
\end{thm}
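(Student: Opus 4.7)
\emph{Plan.} The argument parallels the proof of Theorem \ref{thm:Wn}, adjusted to the fact that $P_{\ba,\bb}$ enforces only $n-1$ inter-component constraints rather than $n$. First I would introduce finitized characters $\chi_{\ba,\bb}[\bN]$ for $\bN\in\Z^n_{\ge0}$ by restricting to tuples with $\la^{(i)}_{N_i+1}=0$, with the usual conventions $\chi_{\ba,\bb}[\bs 0]=1$ and $\chi_{\ba,\bb}[\bN]=0$ whenever some $N_i<0$. Applying the ``split on whether $\la^{(i)}_{N_i}$ equals $0$'' bijection as in the derivation of \eqref{rec2}, \eqref{rec4}, I would derive analogous recursions. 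For $i=2,\ldots,n-1$ they are identical to the cyclic case. At the endpoints, the missing constraints force asymmetric modifications: the $i=n$ recursion loses the ``$+\one_i$'' piece (since $a_n$ does not exist) and holds without any condition on $N_n-N_{n-1}$ (since $b_n$ does not exist), reading
\[
\chi_{\ba,\bb}[\bN] = \chi_{\ba,\bb}[\bN-\one_n] + q^{N_n}\chi_{\ba-\one_{n-1},\bb}[\bN]\qquad(a_{n-1}\ge 1),
\]
while at $i=1$ the ``$-\one_{i-1}$'' piece is missing and $\ba$ gets augmented rather than shuffled.

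For the bosonic side, I would introduce the finitized counterpart
\[
\chb_{\bs\eta,\bs\xi}[\bN] := \sum_{w\in S_n}(-1)^{\ell(w)}\,q^{(\bs\xi+\rho-w(\bs\xi+\rho),\,\bs\eta+\rho)}\,(q)_{|\bN|}\prod_{i=1}^n\frac{1}{(q)_{N_i-(w(\bs\xi+\rho)-(\bs\xi+\rho),\,\omega_i-\omega_{i-1})}}\,,
\]
with $\bs\eta = \sum_{i}a_i\omega_i$ and $\bs\xi=\sum_{i}b_i\omega_i$, and verify term-by-term---mirroring Proposition \ref{lem-ch2}---that $\chb_{\bs\eta,\bs\xi}[\bN]/(q)_{|\bN|}$ satisfies the same recursions and the same boundary/initial conditions as $\chi_{\ba,\bb}[\bN]$. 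Because the sum ranges only over the finite Weyl group $S_n$, rather than the affine group $W=S_n\ltimes Q$ used in Theorem \ref{thm:Wn}, the verification is markedly lighter than in the cyclic $\W_n$ case: there are no lattice-translation contributions to handle.

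The main obstacle I expect is the uniqueness step, the analog of Proposition \ref{prop:char1}. In the cyclic setting, iterating \eqref{rec2} around the indices closed into a loop whose ``total weight'' $q^{\sum_i N_i}$ was $<1$, which let one solve for $\chi^{p',p}_{\ba,\bb}[\bN]$ in terms of smaller $d=\sum_i N_i$; here no such closure exists. Instead I would run a joint induction on the pair $(d,|\ba|)$ with $|\ba|=\sum_i a_i$: the new $i=n$ recursion strictly decreases $|\ba|$ (when $a_{n-1}\ge1$); the recursions for $i=2,\ldots,n-1$ shuffle a nonzero $a_j$ rightward so that eventually $a_{n-1}\ge1$; and the corner case $\ba=\bs0$ is handled by a combined use of the $i=1$ recursion followed by the same rightward shuffle. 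Once uniqueness is established, taking $N_i\to\infty$ componentwise gives $\chi_{\ba,\bb}[\bN]\to\chi_{\ba,\bb}$ on the combinatorial side, while on the bosonic side the finitization factors $(q)_{|\bN|}\prod 1/(q)_{N_i-\ldots}$ collapse to reproduce the stated closed form.
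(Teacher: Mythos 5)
Your route would work, but it is a genuinely different (and much heavier) one than the paper's. The paper obtains Theorem \ref{thm:Mchar} in two lines as a corollary of Theorem \ref{thm:Wn}: for fixed $\ba,\bb$ one lets $p',p\to\infty$, so that $a_n,b_n\to\infty$ and the $n$-th (cyclic) constraint $\la^{(n)}_j\ge\la^{(1)}_{j+b_n}-a_n$ becomes vacuous in each fixed degree; thus $P_{\ba,\bb}$ is the (degreewise stabilizing) limit of $P^{p',p}_{\ba,\bb}$, while on the bosonic side the terms of $\chb^{p',p}_{\bets,\bxis}$ with $w=(\sigma,\alpha)$, $\alpha\neq0$, carry the exponent $\tfrac{p'p}{2}(\alpha,\alpha)+O(p+p')\to\infty$ and drop out, collapsing the affine Weyl sum to the finite sum over $S_n$ with exponent $(\bxi+\rho-\sigma(\bxi+\rho),\bet+\rho)$. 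This sidesteps entirely the step you correctly identify as the main obstacle: a fresh uniqueness argument for the non-cyclic recursion system. Your plan for that step (joint control of $d=\sum_iN_i$ and a rightward shuffle of $\ba$ terminating in the $|\ba|$-decreasing $i=n$ recursion, with the $i=1$ recursion re-injecting mass in the corner case $\ba=\bs 0$) is plausible and essentially reproduces Case a) of Proposition \ref{prop:char1} after one full cycle, but you would still need to redo the Case b) analysis (indices with $N_{i}-N_{i-1}=b_{i-1}+1$) and track the applicability conditions $N_{i+1}-N_i\le b_i$ at each shuffle, none of which is addressed. What your approach buys is independence from the resonance condition \eqref{resonance n} and from Theorem \ref{thm:Wn} itself; what the paper's approach buys is that all of this work has already been done once, in the cyclic case, and need not be repeated.
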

\begin{proof}
Clearly, the set  $P_{\ba,\bb}$ is the limit of the set  $P_{\ba,\bb}^{p',p}$ as $p',p\to \infty$. The theorem then follows from Theorem \ref{thm:Wn}.
\end{proof}


\newcommand{\lab}{\mbox{\boldmath$\la$}}

\section{Isomorphisms of representations}\label{iso sec}
In this section we establish several isomorphisms 
between representations of $\E$ discussed in this paper. 
All these isomorphisms preserve the basis described in terms of partitions.
This is no wonder since the modules are tame. 
In general, we expect that any two highest weight $\E$-modules with
the same highest weight are isomorphic. We check this statement here in
several cases. At the moment our proofs are strictly computational.

\subsection{Permutations of factors in the tensor products of Fock spaces}
In this section we assume that $q_1,q_2,u_1,\dots,u_n$ are generic. 

\begin{thm}\label{R-mat}
Let $\si\in S_n$ and let $q_1,q_2,u_1,\dots,u_n$ be generic.
There exist non-zero constants $b_{\bs\la}$, where 
$\bs\la=(\la^{(1)},\dots,\la^{(n)})\in {\mc P}^{n}$, such that the map
\begin{align*}
\iota:\ 
\F(u_1)\otimes \dots \otimes \F(u_n) &
\to \F(u_{\si(1)})\otimes \dots \otimes \F(u_{\si(n)}),\\
\ket{\la^{(1)}}\otimes \dots \otimes \ket{\la^{(n)}}& \mapsto 
b_{\bs \la}\ket{\la^{(\si(1))}}\otimes \dots \otimes \ket{\la^{(\si(n))}}
\end{align*}
is an isomorphism of $\E$-modules.
\end{thm}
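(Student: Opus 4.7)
The plan is to reduce to the case $n=2$ and a simple (i.e., adjacent) transposition. Since any $\sigma\in S_n$ is a product of adjacent transpositions, and an intertwiner swapping two adjacent factors can be composed with identities on the remaining factors (the comultiplication rule respects the position of the other factors), it suffices to construct $\iota$ in the case $n=2$. In that case, both $\F(u_1)\otimes\F(u_2)$ and $\F(u_2)\otimes\F(u_1)$ are irreducible tame $\E$-modules; moreover, $\ket{\la,\mu}\in\F(u_1)\otimes\F(u_2)$ and $\ket{\mu,\la}\in\F(u_2)\otimes\F(u_1)$ share the same joint $\psi^+(z)$-eigenvalue $\psi_\la(u_1/z)\psi_\mu(u_2/z)$ (by \eqref{trpsi}), and similarly for $\psi^-(z)$. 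Since each joint eigenspace is one-dimensional, any $\E$-intertwiner is forced to act diagonally as $\ket{\la,\mu}\mapsto b_{\la,\mu}\ket{\mu,\la}$, and the intertwining of $\psi^\pm(z)$ is automatic.

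To determine the scalars, normalize $b_{\emptyset,\emptyset}=1$ and compare matrix coefficients of $e(z)$ on both sides. The comultiplication rule $\triangle e(z)=e(z)\otimes 1+\psi^-(z)\otimes e(z)$ implies that adding a box at position $i$ to the $\F(u_1)$-factor of $\F(u_1)\otimes\F(u_2)$ contributes no extra factor, while the same operation in $\F(u_2)\otimes\F(u_1)$ (where $\F(u_1)$ is now the second factor) picks up $\bra{\mu}\psi^-(z)\ket{\mu}$ evaluated at the support $z=q_1^{\la_i}q_3^{i-1}u_1$ of the delta function. Matching coefficients of $\ket{\mu,\la+\bs 1_i}$ and $\ket{\mu+\bs 1_j,\la}$ in the identity $\iota(e(z)\ket{\la,\mu})=e(z)\iota(\ket{\la,\mu})$ gives
\begin{align*}
\frac{b_{\la+\bs 1_i,\mu}}{b_{\la,\mu}}=\psi^-_\mu(u_2/z)\Big|_{z=q_1^{\la_i}q_3^{i-1}u_1},\qquad
\frac{b_{\la,\mu+\bs 1_j}}{b_{\la,\mu}}=\frac{1}{\psi^-_\la(u_1/z)}\Big|_{z=q_1^{\mu_j}q_3^{j-1}u_2},
\end{align*}
where $\psi^\pm_\la$ denotes the eigenvalue of $\psi^\pm(z)$ on $\ket{\la}\in\F(u)$. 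For generic $q_1,q_2,u_1,u_2$, these are non-zero rational expressions, so provided the recursion is consistent, all $b_{\la,\mu}$ are non-zero and $\iota$ is a bijection.

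The main obstacle is proving the consistency of the recursion: the value $b_{\la+\bs 1_i,\mu+\bs 1_j}/b_{\la,\mu}$ obtained by adding boxes in the two possible orders must agree. This reduces to the ``curl'' identity
\begin{align*}
\psi^-_{\mu+\bs 1_j}(w_1)\,\psi^-_\la(w_2)=\psi^-_\mu(w_1)\,\psi^-_{\la+\bs 1_i}(w_2),\qquad w_1=q_3^{1-i}q_1^{-\la_i}u_2/u_1,\ w_2=q_3^{1-j}q_1^{-\mu_j}u_1/u_2,
\end{align*}
which follows from the explicit product formula for $\psi^\pm_\la$ (cf.\ \eqref{psi expl}): adding a box changes only a small number of factors, and since $w_1w_2=q_3^{2-i-j}q_1^{-\la_i-\mu_j}$ is independent of $u_1/u_2$, the new factors on the two sides cancel pairwise. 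Consistency of box additions within a single partition is automatic, because the ratios for different positions depend on disjoint entries of $\la$ (resp.\ $\mu$). Once $b_{\la,\mu}$ is well-defined and non-zero, intertwining of $f(z)$ is checked by an analogous computation using $\triangle f(z)=f(z)\otimes\psi^+(z)+1\otimes f(z)$, yielding consistent ratios; alternatively, since $\iota$ is a bijection intertwining $e(z)$ and $\psi^\pm(z)$, the commutator relation \eqref{rel3} together with the irreducibility of both modules forces intertwining of $f(z)$ as well.
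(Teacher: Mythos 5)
Your overall strategy is the same as the paper's: reduce to $n=2$, note that tameness forces any intertwiner to be diagonal in the partition basis, read off the ratios $b_{\la+\bs 1_i,\mu}/b_{\la,\mu}$ and $b_{\la,\mu+\bs 1_j}/b_{\la,\mu}$ from the comultiplication rule, and then verify consistency of the resulting recursion — these consistency requirements are precisely the paper's conditions \Ref{CON1} and \Ref{CON2}. Your two ratio formulas are correct. However, the ``curl'' identity you display is not the one your recursion actually requires, and as written it is generically false. Equating the two orders of box addition gives
\begin{equation*}
\psi^-_{\mu}(w_1)\,\psi^-_{\la}(w_2)=\psi^-_{\mu+\bs 1_j}(w_1)\,\psi^-_{\la+\bs 1_i}(w_2),
\end{equation*}
i.e.\ the \emph{product} of the two single-box ratios must equal $1$; your version instead asserts that the two ratios are \emph{equal} to each other. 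The correct identity does hold: writing $A=q_1^{\la_i}q_3^{i-1}u_1$ and $B=q_1^{\mu_j}q_3^{j-1}u_2$ for the contents of the two added boxes, formula \Ref{psi expl} gives $\psi^-_{\la+\bs 1_i}(u_1/z)/\psi^-_{\la}(u_1/z)=h(A/z)$ with $h(x)=\prod_{l=1}^{3}\frac{1-q_l^{-1}x}{1-q_lx}$, so the two ratios are $h(A/B)$ and $h(B/A)$, and $h(x)h(1/x)=1$ is a direct consequence of $q_1q_2q_3=1$. Your identity would instead require $h(x)=h(1/x)$, which fails for generic $x$. The heuristic offered (``$w_1w_2$ is independent of $u_1/u_2$, so the new factors cancel pairwise'') does not establish either version; the relevant fact is that the two evaluation points $A/B$ and $B/A$ are reciprocal and $h(x)h(1/x)=1$.

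Two smaller points. First, the cross-consistency between the ratios forced by $e(z)$ and those forced by $f(z)$ (the paper's condition \Ref{CON1}) still has to be checked directly; the proposed shortcut via \Ref{rel3} only shows that $\iota^{-1}f(w)\iota-f(w)$ commutes with all $e(z)$ and with $\psi^{\pm}(z)$, which does not by itself force it to vanish without a further cyclicity argument. Second, the reduction from general $\sigma$ to adjacent transpositions is fine and is also implicit in the paper. With the curl identity corrected and \Ref{CON1} verified, your argument coincides with the paper's proof.
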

\begin{proof}
It is sufficient to prove the theorem in the case $n=2$. 

Let $n=2$ and $\si=(12)$.
It is necessary and sufficient to show that there exist coefficients
$b_{\bs \la}$ satisfying the conditions
\begin{align*}
&b_{\bs\la+\bs 1^{(i)}_{s}}\bra{\lab+{\bs 1}^{(i)}_{s}}e(z)\ket{\lab}=
b_{\bs \la}
\bra{\bs\la'+\bs 1^{(3-i)}_{s}}e(z)\ket{\bs \la'},\\
&b_{\bs\la}\bra{\lab}f(z)\ket{\lab+{\bs 1}^{(i)}_{s}}=
b_{\bs\la+\bs 1^{(i)}_{s}}
\bra{\bs \la'}f(z)\ket{\bs\la'+\bs 1^{(3-i)}_{s}}.
\end{align*}
Here $i=1,2$ and if $\bs\la=(\la^{(1)},\la^{(2)})$ then
$\bs\la'=(\la^{(2)},\la^{(1)})$.

In order that these equations for $b_{\bs \la}$ to be consistent
the following conditions are necessary and sufficient.
\begin{align}
&\frac
{\bra{\lab}f(w)\ket{\lab+{\bs 1}^{(i)}_{s}}}
{\bra{\lab'}f(w)\ket{\lab'+{\bs 1}^{(3-i)}_{s}}}
=\frac
{\bra{\lab'+{\bs 1}^{(3-i)}_{s}}e(z)\ket{\lab'}}
{\bra{\lab+{\bs 1}^{(i)}_{s}}e(z)\ket{\lab}},\label{CON1}\\
&\frac{\bra{\lab+{\bs 1}^{(i)}_{s}+{\bf1}^{(j)}_{t}}e(z)\ket{\lab+{\bs 1}^{(i)}_{s}}}
{\bra{\lab'+\bs 1^{(3-i)}_{s}+\bs 1^{(3-j)}_{t}}e(z)\ket{\lab'+\bs 1^{(3-i)}_{s}}}
\cdot\frac{\bra{\lab+{\bs 1}^{(i)}_{s}}e(w)\ket{\lab}}
{\bra{\lab'+\bs 1^{(3-i)}_{s}}e(w)\ket{\lab'}}\label{CON2}\\
&=\frac{\bra{\lab+{\bf1}^{(i)}_{s}+{\bf1}^{(j)}_{t}}e(z)\ket{\lab+{\bs 1}^{(j)}_{t}}}
{\bra{\lab'+\bs 1^{(3-i)}_{s}+\bs 1^{(3-j)}_{t}}e(z)\ket{\lab'+\bs 1^{(3-j)}_{t}}}
\cdot\frac{\bra{\lab+{\bs 1}^{(j)}_{t}}e(w)\ket{\lab}}
{\bra{\lab'+\bs 1^{(3-j)}_{t}}e(w)\ket{\lab'}}.\nn
\end{align}
The precise meaning of such equations is as follows. 
Suppose that $\delta_i(z)=c_i\sum_{n\in\Z}(u/z)^n$, where $i=1,2$, 
are delta functions with the same support multiplied by 
non-zero constants $c_i$.  Then by the ratio $\delta_1(z)/\delta_2(z)$ we mean
the ratio $c_1/c_2$. For example, we have
\begin{align*}
\frac{\bra{\la + {\bf 1}_i}e(z)_i\ket{\la}}
{\bra{\la}f(z)_i\ket{\la + {\bf 1}_i}}=q_1^{-1}.
\end{align*}

Equations  \Ref{CON1}, \Ref{CON2} are checked by a straightforward computation.
\end{proof}

\subsection{The $Z_n$ symmetry of $\mc M^{p',p}_{\bs a,\bs b}$}
In this section we assume that the parameters $q_1,q_2,u_1,\dots,u_n$ satisfy
\Ref{shift n} and \Ref{resonance n}.

\begin{thm}
Let parameters $q_1,q_2,u_1,\dots,u_n$ satisfy
\Ref{shift n} and \Ref{resonance n}.
There exist non-zero constants $c_{\bs\la}$, where 
$\bs\la\in P^{p',p}_{\bs a,\bs b}$,
such that the map
\begin{align*}
\iota:\ 
\M^{p',p}_{(a_1,\dots,a_{n-1}),(b_1,\dots,b_{n-1})} &\to 
\M^{p',p}_{(a_2,\dots,a_n),(b_2,\dots,b_n)}, 
\\
\ket{\la^{(1)}}\otimes \dots \otimes 
\ket{\la^{(n-1)}}\otimes \ket{\la^{(n)}}& \mapsto 
c_{\bs \la}\ket{\la^{(2)}}\otimes \dots \otimes 
\ket{\la^{(n)}}\otimes \ket{\la^{(1))}}
\end{align*}
is an isomorphism of $\E$-modules.
\end{thm}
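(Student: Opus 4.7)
The plan is to mimic the proof of Theorem \ref{R-mat}, with the cyclic permutation replacing the transposition and the resonance condition \eqref{resonance n} compensating for the wrap-around. First, I would verify the set-theoretic bijection: for $\bs\la=(\la^{(1)},\dots,\la^{(n)})\in P^{p',p}_{(a_1,\dots,a_{n-1}),(b_1,\dots,b_{n-1})}$, the shifted tuple $\bs\la'=(\la^{(2)},\dots,\la^{(n)},\la^{(1)})$ lies in $P^{p',p}_{(a_2,\dots,a_n),(b_2,\dots,b_n)}$, directly from the cyclic form \eqref{Mpp} of the defining inequalities. Accordingly, the parameters on the target side are forced to be $u'_i=u_{i+1}$ for $i<n$ and $u'_n=u_1$; the relations \eqref{shift n} for the target then hold automatically for $i\le n-2$, while the $i=n-1$ relation amounts to $u_n/u_1=q_1^{a_n+1}q_3^{b_n+1}$, which is precisely the content of the resonance $q_1^{p'}q_3^p=1$.

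Next, I would match the $\psi^{\pm}(z)$-eigenvalues: on $\ket{\bs\la}$ they equal $\prod_{i=1}^n\psi_{\la^{(i)}}(u_i/z)$, and on $\ket{\bs\la'}$ in the target they equal $\prod_{i=1}^n\psi_{\la^{(i+1)}}(u'_i/z)$, which is the same product. Because both modules are tame (Theorem \ref{tame thm 2}), this agreement forces any $\E$-module intertwiner to act diagonally in these bases, so it must have the stated form $\iota(\ket{\bs\la})=c_{\bs\la}\ket{\bs\la'}$ for some non-zero scalars. Imposing that $\iota$ commute with $e(z)$ and $f(z)$ yields a system of one-dimensional constraints on the $c_{\bs\la}$, one for each edge $\bs\la\to\bs\la+\bs 1_s^{(i)}$ of the admissibility graph. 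As in the derivation of \eqref{CON1}--\eqref{CON2}, consistency of this system reduces to identities of two types: a CON1-type identity equating the ratio of $f$-matrix coefficients on the source with the inverse ratio of $e$-matrix coefficients on the target, and a CON2-type identity equating the product of $e$-ratios around any commuting square. Once these are verified, the $c_{\bs\la}$ are uniquely built by induction from $c_{\bs\emptyset}=1$.

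The main obstacle is expected to be the wrap-around case of CON2, namely when the commuting pair of moves is $\la^{(1)}\to\la^{(1)}+\bs 1_s$ and $\la^{(j)}\to\la^{(j)}+\bs 1_t$ with $j\ne 1$. In the source, the first of these moves sits at the leftmost tensor slot, while on the target it sits at the rightmost slot, so the tails of $\psi^\mp(z)$ factors coming from the comultiplication rules \eqref{tre}--\eqref{trpsi} and from \eqref{e}--\eqref{psi} are reordered in a non-trivial way. Showing that the ratios on the two sides nevertheless agree amounts to collapsing a product of $\psi$-factors running once around the cycle, where the resonance $q_1^{p'}q_3^p=1$ intervenes in exactly the same manner as in the cancellation-of-poles argument in the proof of Theorem \ref{tame thm 2}. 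Away from this wrap-around case, the identities decompose by Lemma \ref{2n} into checks on pairs $(\la^{(i)},\la^{(j)})$ of adjacent factors, and reduce to the calculations already performed in Theorem \ref{R-mat} for the generic tensor product.
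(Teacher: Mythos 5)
Your proposal follows essentially the same route as the paper: the paper's proof likewise consists of checking the set-theoretic bijection and then observing that the consistency equations on the constants $c_{\bs\la}$ coincide with those of Theorem \ref{R-mat} for $\sigma=(1,2,\dots,n)$. The only difference is one of emphasis: where you propose to verify the wrap-around case of the CON2-type identity directly using the resonance $q_1^{p'}q_3^p=1$, the paper treats all these identities as rational-function identities already established generically in Theorem \ref{R-mat} and hence persisting under the specialization \eqref{shift n}, \eqref{resonance n}.
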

\begin{proof}
Clearly the set theoretic map 
\begin{align*}
\iota:\ 
P^{p',p}_{(a_1,\dots,a_{n-1}),(b_1,\dots,b_{n-1})} &\to 
P^{p',p}_{(a_2,\dots,a_n),(b_2,\dots,b_n)}, 
\\
(\la^{(1)},\dots, \la^{(n-1)},\la^{(n)})& \mapsto 
(\la^{(2)}, \dots, \la^{(n)},\la^{(1)})
\end{align*}
is a bijection.

Then the equations on the constants $c_{\bs\la}$ are the same as in
Theorem \ref{R-mat} with $\si=(1,2,\dots,n)$. Therefore, the theorem follows.
\end{proof}

\subsection{The modules $\mc M^{n+r,n+1}_{\bs a,\bs 0}$ and $G^{n,r}_{\bs a}$}

In this section we assume the following resonance condition
\begin{align}
q_1^{n+r}q_3^{n+1}=1.\label{RESONANCE}
\end{align}
We consider the special case of $\mc M^{p',p}_{\bs a,\bs b}$ where 
\begin{align*}
&p'=n+r,\ p=n+1,\ r=\sum_{i=1}^na_i,\\
&\ba=(a_1,\ldots,a_{n-1}),\ \bb={\bf 0}=(\underbrace{{0,\dots,0}}_{n-1}).
\end{align*}

We abbreviate the representation $\M^{n+k,n+1}_{\bf a,0}$ to 
$\M_{\bar{\bf a}}$, where $\bar{\bf a}=(a_1,\dots,a_n)$, and the set of $n$-tuple partitions 
$P^{n+r,n+1}_{\bs a,\bs 0}$ to $P_{\bar{\bs a}}$:
\begin{align*}
P_{\bar{\bs a}}=\{\lab=(\la^{(1)},\dots,\la^{(n)})\,
|\ \la^{(i)}\in\mc P^+, \ \la^{(i)}_j\geq\la^{(i+1)}_j-a_i,\ {\rm where}\ i=1,\dots,n,\ j\in\Z_{\geq 0}\}.
\end{align*}
Similarly, we abbreviate the representation $G^{n,r}_{\bs a}$ to $G_{\bar {\bs a}}$
and the set $\mc P^{n,r}_{\bs a}$ to $\mc P_{\bar {\bs a}}$.

Our aim is to prove that two representations
$\M_{\bar{\bf a}}$ and $G_{\bar{\bf a}}$ are isomorphic:
\begin{align}
\iota:\M_{\bar{\bf a}}\buildrel\simeq\over\longrightarrow G_{\bar{\bf a}}.\label{IOTA}
\end{align}
Recall that the vector space $G_{\bar{\bf a}}$ has 
a basis labeled by the set $\mc P_{\bar{\bs a}}$
of $(n,r)$-admissible partitions.
Let $\iota$ be a bijection given by
\begin{align*}
&\iota:\ P_{\bar{\bf a}}\to \mc P_{\bar{\bf a}}, \quad \bs\la\mapsto \La,\\
&\La_{ns+i}=\la^{(i)}_{s+1}+\Lambda^0_{ns+i},\ {\rm for}\ i=1,\dots,n,\ s\in\Z_{\geq 0}.
\end{align*}

\begin{thm}\label{iso thm}
There exist non-zero constants $c_\La$, $\La\in \mc P_{\bar{\bf a}}$, 
such that the linear map 
\begin{align}\label{ISO}
\iota:\ \M_{\bar{\bf a}}&\to G_{\bar{\bf a}}, \\
\ket{\bs\la} &\mapsto c_\La \ket{\La}, \qquad \La=\iota(\bs \la),
\end{align}
is an isomorphism of graded $\E$-modules.
\end{thm}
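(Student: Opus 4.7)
The plan exploits tameness of both modules. By Theorem \ref{tame thm 2} and Lemma \ref{G lem}, both $\M_{\bar{\bf a}}$ and $G_{\bar{\bf a}}$ are irreducible tame $\E$-modules, with bases indexed by $P_{\bar{\bs a}}$ and $\mc P_{\bar{\bs a}}$ respectively, and $\iota$ is a bijection between these sets carrying the vacuum $(\emptyset,\dots,\emptyset)$ to $\La^0$. The first step is to verify that the joint eigenvalue of $\psi^\pm(z)$ on $\ket{\bs\la}\in\M_{\bar{\bf a}}$ coincides with that on $\ket{\La}\in G_{\bar{\bf a}}$ for $\La=\iota(\bs\la)$. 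This is a direct manipulation of the product formulas \eqref{psi} (applied to each tensor factor) and \eqref{psi fat}, using the shift relations $u_i=u_{i+1}q_1^{a_i+1}q_3$ and the resonance \eqref{RESONANCE} to rewrite one product as the other. In particular it suffices to check agreement of the highest weights, after which agreement on all $\ket{\bs\la}$, $\ket{\La}$ follows from the shift formula $\La_{ns+i}=\la^{(i)}_{s+1}+\La^0_{ns+i}$.

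Once the $\psi^\pm(z)$-eigenvalues agree, simplicity of the joint spectrum forces any $\E$-module isomorphism preserving the combinatorial labeling to send $\ket{\bs\la}$ to a nonzero scalar multiple $c_{\La}\ket{\La}$. The constants $c_\La$ are then determined inductively by demanding that $e(z)$ intertwines: whenever $\bs\la+\bs 1^{(i)}_s\in P_{\bar{\bs a}}$ corresponds under $\iota$ to $\La+\bs 1_j\in\mc P_{\bar{\bs a}}$, the ratio
\begin{equation*}
\frac{c_{\La+\bs 1_j}}{c_{\La}}=\frac{\bra{\bs\la+\bs 1^{(i)}_s}e(z)\ket{\bs\la}}{\bra{\La+\bs 1_j}e(z)\ket{\La}}
\end{equation*}
is interpreted as the ratio of coefficients of delta functions with the same support (guaranteed by the eigenvalue match, since the support is $q_1^{\La_j}q_3^{j-1}u/z$ and $q_1^{\la^{(i)}_s}q_3^{?}u_i/z$ respectively, and these agree by the shift relations). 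Starting from $c_{\La^0}=1$ and adding boxes one at a time, this determines every $c_\La$.

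The core verification is a consistency check: for any $\La$ and any two distinct boxes $\bs 1_{j_1},\bs 1_{j_2}$ such that $\La,\La+\bs 1_{j_1},\La+\bs 1_{j_2},\La+\bs 1_{j_1}+\bs 1_{j_2}$ all lie in $\mc P_{\bar{\bs a}}$, the two recursive paths must yield the same value for $c_{\La+\bs 1_{j_1}+\bs 1_{j_2}}$. Equivalently, one needs an identity of products of the matrix-coefficient ratios computed from \eqref{e} on the $\M$-side and \eqref{e fat} on the $G$-side. The check splits into cases according to the relative positions of the two boxes (same component of $\bs\la$ or different; and on the $G$-side whether they sit in the same row or in rows differing by a multiple of $n$), but in each case it reduces to a rational-function identity in $q_1,q_3,u$ using $q_1^{n+r}q_3^{n+1}=1$. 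This case analysis is the main obstacle: no single step is deep, but the bookkeeping is substantial, and the role of the resonance is precisely to ensure that the two sides, which a priori involve different infinite products, are finitely different by cancellations consistent with the $(n,r)$-admissibility constraint.

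Finally, intertwining for $f(z)$ is automatic. In a tame irreducible module, the matrix coefficients of $f(z)$ on adjacent basis vectors are determined by those of $e(z)$ and $\psi^\pm(z)$ via the commutator relation \eqref{rel3}. Since these have already been matched, the intertwining propagates to $f(z)$ without further computation, and the resulting linear map \eqref{ISO} is an $\E$-module isomorphism, graded since both sides carry the grading by $|\bs\la|=\sum_{i,s}\la^{(i)}_s=\sum_j(\La_j-\La^0_j)$ which is preserved by $\iota$.
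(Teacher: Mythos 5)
Your proposal is correct and follows the same strategy as the paper's proof: the constants $c_\La$ are defined by the ratios of $e(z)$-matrix coefficients along chains of box additions starting from the vacuum, the key fact making these ratios meaningful is the equality of delta-function supports $q_1^{\la^{(i)}_{s+1}}q_3^su_i=q_1^{\La_{ns+i}}q_2^{ns+i-1}u$ (which also gives the term-by-term matching of the $\psi^\pm$-eigenvalues), and the heart of the matter is the two-path consistency identity, which is exactly the paper's condition \eqref{CONSISTENT2}; both you and the paper reduce this to a rational-function verification using the resonance and leave the bookkeeping implicit. The one genuine divergence is your treatment of $f(z)$: the paper verifies directly the compatibility condition \eqref{CONSISTENT1} between $e$ and $f$ together with the $f$-analogue of \eqref{CONSISTENT2}, whereas you derive $f$-intertwining from the relation \eqref{rel3}. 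This is a legitimate and economical shortcut: in a tame module the supports of the delta functions attached to distinct addable/removable boxes are pairwise distinct, so the diagonal matrix element of $[e(z),f(w)]$ determines each product $\bra{\bs\mu}e(z)\ket{\bs\la}\,\bra{\bs\la}f(z)\ket{\bs\mu}$ from the $\psi^\pm$-eigenvalues alone; once the $e$-coefficients and eigenvalues are matched, the $f$-coefficients transform with the inverse ratio $c_\La/c_{\La+\bs 1_j}$, which is precisely what intertwining demands. What the shortcut buys is one fewer explicit computation; what the paper's direct route buys is independence from the (true but unstated) nonvanishing of the relevant $e$-coefficients needed to divide. Two small points you leave tacit, as does the paper: that every $\La\in\mc P_{\bar{\bs a}}$ is reachable from $\La^0$ by single box additions within $\mc P_{\bar{\bs a}}$ (needed for the recursion to define all $c_\La$), and that the matching of highest weights alone does not give the eigenvalue matching on all vectors without the term-by-term identity $\bra{\la^{(i)}}\psi^\pm(z)_{s+1}\ket{\la^{(i)}}=\bra{\La}\psi^\pm(z)_{ns+i}\ket{\La}$, which is the paper's ``basic equality.''
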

\begin{proof}
The proof is similar to the proof of Theorem \ref{R-mat}. 
However, the checks are slightly more involved and we give some details.

Note that the vectors $e(z)\ket{\Lambda}$ is a finite linear
combination of the vectors $\ket{\Lambda+\bs 1_j}$, where 
$\Lambda+\bs 1_j=
(\Lambda_1,\Lambda_2,\dots,\Lambda_{j-1},\Lambda_j+1,\Lambda_{j+1},\dots)$,
and $f(z)\ket{\Lambda}$ is a finite linear
combination of the vectors $\ket{\Lambda-\bs 1_j}$.

Therefore, it is necessary and sufficient to show that there exist coefficients
$c_\Lambda$ satisfying the conditions
\begin{align*}
&c_{\Lambda+\bs 1_{ns+i}}\bra{\lab+{\bs 1}^{(i)}_{s+1}}e(z)\ket{\lab}=
c_\Lambda
\bra{\Lambda+\bs 1_{ns+i}}e(z)\ket{\Lambda},\\
&c_\Lambda\bra{\lab}f(z)\ket{\lab+{\bs 1}^{(i)}_{s+1}}=
c_{\Lambda+\bs 1_{ns+i}}
\bra{\Lambda}f(z)\ket{\Lambda+\bs 1_{ns+i}}.
\end{align*}
Here $\bra{\lab+{\bf1}^{(i)}_{s+1}}e(z)\ket{\lab}$ and $\bra{\lab}f(z)\ket{\lab+{\bf1}^{(i)}_{s+1}}$
denote the matrix coefficients of $e(z)$ and $f(z)$ respectively in the module $\M_{\bar{\bf a}}$, and 
$\bra{\Lambda+\bs 1_{ns+i}}e(z)\ket{\Lambda}$ and $\bra{\Lambda}f(z)\ket{\Lambda+\bs 1_{ns+i}}$ are those
in the module $G_{\bar{\bf a}}$. 

In order that these equations for $c_\Lambda$ to be consistent
the following conditions are necessary and sufficient.
\begin{align}
&\frac{\bra{\lab}f(w)\ket{\lab+{\bs 1}^{(i)}_{s+1}}}
{\bra{\Lambda}f(w)\ket{\Lambda+\bs 1_{ns+i}}}
=\frac{\bra{\Lambda+\bs 1_{ns+i}}e(z)\ket{\Lambda}}
{\bra{\lab+{\bs 1}^{(i)}_{s+1}}e(z)\ket{\lab}},
\label{CONSISTENT1}\\
&\frac{\bra{\lab+{\bs 1}^{(i)}_{s+1}+{\bf1}^{(j)}_{t+1}}e(z)\ket{\lab+{\bs 1}^{(i)}_{s+1}}}
{\bra{\Lambda+\bs 1_{ns+i}+\bs 1_{nt+j}}e(z)\ket{\Lambda+1_{ns+i}}}\cdot
\frac{\bra{\lab+{\bf1}^{(i)}_{s+1}}e(w)\ket{\lab}}
{\bra{\Lambda+\bs 1_{ns+i}}e(w)\ket{\Lambda}}\label{CONSISTENT2}\\
&=\frac{\bra{\lab+{\bf1}^{(i)}_{s+1}+{\bf1}^{(j)}_{t+1}}e(z)\ket{\lab+{\bs 1}^{(j)}_{t+1}}}
{\bra{\Lambda+\bs 1_{ns+i}+\bs 1_{nt+j}}e(z)\ket{\Lambda+\bs 1_{nt+j}}}\cdot
\frac{\bra{\lab+{\bs 1}^{(j)}_{t+1}}e(w)\ket{\lab}}
{\bra{\Lambda+\bs 1_{nt+j}}e(w)\ket{\Lambda}}.\nn
\end{align}
and the condition, similar to \Ref{CONSISTENT2} for $f(z)$. Here we
follow the conventions for the ratios of delta functions with the
same support as in \Ref{CON1}, \Ref{CON2}.

We rewrite the action  of $\E$ on  $G_{\bar{\bf a}}$ 
in such a way that the matrix coefficients of $e(z)$ and
$f(z)$ acting on $G_{\bar{\bf a}}$ are given in terms of certain
quantities which are used in the expressions for the matrix
coefficients of $e(z)$ and $f(z)$ acting on $\M_{\bar{\bf a}}$. This
makes the proof of these equation shorter.

The vector in $G_{\bar{\bf a}}$ which  corresponds to 
$\Lambda\in \mc P_{\bar {\bs a}}$ is given by a semi-infinite tensor product:
\begin{align*}
\ket{\Lambda}=\ket{\La_1}_u\otimes\ket{\La_2-1}_{uq_3^{-1}}
\otimes\dots\otimes\ket{\Lambda_j-j+1}_{q_3^{-j+1}u}\otimes \dots .
\end{align*}

Let $\bra{\Lambda}\psi^\pm(z)_{ns+i}\ket{\Lambda}$ 
be the matrix coefficient
of $\psi^\pm(z)$ in the module $V(q_3^{-ns-i+1}u)$:
\begin{align*}
\psi^\pm(z)\ket{\Lambda_{ns+i}-ns-i+1}_{q_3^{-ns-i+1}u}
=\bra{\Lambda}\psi^\pm(z)_{ns+i}\ket{\Lambda}
\ket{\Lambda_{ns+i}-ns-i+1}_{q_3^{-ns-i+1}u}.
\end{align*}

Recall that we also write $\bra{\la^{(i)}}\psi^\pm(z)_{s+1}\ket{\la^{(i)}}$
for the matrix coefficient
of $\psi^\pm(z)$ in the module $V(q_2^{-s}u_i)$:
\begin{align*}
\psi^\pm(z)\ket{\la^{(i)}_{s+1}-s}_{q_2^{-s}u_i}
=\bra{\la^{(i)}}\psi^\pm(z)_{s+1}\ket{\la^{(i)}}
\ket{\la^{(i)}_{s+1}-s}_{q_2^{-s}u_i},
\end{align*}
where
\begin{align}\label{UI}
u_i=q_1^{-\sum_{j=1}^{i-1}(a_j+1)}q_3^{-i+1}u=q_1^{-c_{i-1}}q_2^{i-1}u.
\end{align}
Note that we consider these quantities as rational functions in $z$,
not as series in $z^{\pm1}$ therefore these is no distinction between
$\psi^\pm(z)$.

The following is the basic equality for the proof of the isomorphism:
\begin{align*}
q_1^{\la^{(i)}_{s+1}}q_3^su_i=q_1^{\Lambda_{ns+i}}q_2^{ns+i-1}u.
\end{align*}
From this follows
\begin{align*}
\bra{\la^{(i)}}\psi^\pm(z)_{s+1}\ket{\la^{(i)}}=
\bra{\Lambda}\psi^\pm(z)_{ns+i}\ket{\Lambda}.
\end{align*}

Using these equalities one can write the formula for the actions of 
$e(z), f(z)$ in $G_{\bar{\bf a}}$ in the form
\begin{align*}
&\bra{\Lambda+\bs 1_{ns+i}}e(z)\ket{\Lambda}
=\frac{\delta(q_1^{\la^{(i)}_{s+1}}q_3^su_i/z)}{1-q_1}
\prod_{j=1}^{i-1}
\left(\frac{1-q_2u_j/z}{1-u_j/z}\frac{1-q_3^{s+1}u_j/z}{1-q_2q_3^{s+1}u_j/z}
\prod_{m=1}^{s+1}\frac{\bra{\la^{(j)}}\psi^-(z)_m\ket{\la^{(j)}}}
{\bra{\emptyset^{(j)}}\psi^-(z)_m\ket{\emptyset^{(j)}}}\right)\\
&\times\prod_{j=i}^n\left(\frac{1-q_2u_j/z}{1-u_j/z}\frac{1-q_3^su_j/z}{1-q_2q_3^su_j/z}
\prod_{m=1}^{s}\frac{\bra{\la^{(j)}}\psi^-(z)_m\ket{\la^{(j)}}}
{\bra{\emptyset^{(j)}}\psi^-(z)_m\ket{\emptyset^{(j)}}}\right),\\
&\bra{\Lambda}f(z)\ket{\Lambda+\bs 1_{ns+i}}
=\frac{q_1\delta(q_1^{\la^{(i)}_{s+1}}q_3^su_i/z)}{1-q_1}
\prod_{j=1}^i\left(\frac{1-q_2q_3^{s+1}u_j/z}{1-q_3^{s+1}u_j/z}
\prod_{m=s+2}^\infty\frac{\bra{\la^{(j)}}\psi^+(z)_m\ket{\la^{(j)}}}
{\bra{\emptyset^{(j)}}\psi^+(z)_m\ket{\emptyset^{(j)}}}\right)\\
&\times\prod_{j=i+1}^n\left(\frac{1-q_2q_3^su_j/z}{1-q_3^su_j/z}
\prod_{m=s+1}^\infty\frac{\bra{\la^{(j)}}\psi^+(z)_m\ket{\la^{(j)}}}
{\bra{\emptyset^{(j)}}\psi^+(z)_m\ket{\emptyset^{(j)}}}\right).
\end{align*}

We also have the following formulas for the matrix coefficients 
in $\M_{\bar{\bf a}}$.
\begin{align*}
\bra{\lab+{\bs 1}^{(i)}_{s+1}}e(z)\ket{\lab}
&=\frac{\delta(q_1^{\la^{(i)}_{s+1}}q_3^su_i/z)}{1-q_1}\frac{1-q_2u_i/z}{1-u_i/z}\frac{1-q_3^su_i/z}{1-q_2q_3^su_i/z}
\prod_{m=1}^{s}\frac{\bra{\la^{(i)}}\psi^-(z)_m\ket{\la^{(i)}}}
{\bra{\emptyset^{(i)}}\psi^-(z)_m\ket{\emptyset^{(i)}}}\\
&\times\prod_{j=1}^{i-1}\left(\frac{1-q_2u_j/z}{1-u_j/z}\prod_{m=1}^\infty
\frac{\bra{\la^{(j)}}\psi^-(z)_m\ket{\la^{(j)}}}
{\bra{\emptyset^{(j)}}\psi^-(z)_m\ket{\emptyset^{(j)}}}\right),\\
\bra{\lab}f(z)\ket{\lab+{\bf1}^{(i)}_{s+1}}
&=\frac{q_1\delta(q_1^{\la^{(i)}_{s+1}}q_3^su_i/z)}{1-q_1}
\frac{1-q_2q_3^{s+1}u_i/z}{1-q_3^{s+1}u_i/z}\prod_{m=s+2}^\infty\frac
{\bra{\la^{(i)}}\psi^+(z)_m\ket{\la^{(i)}}}{\bra{\emptyset^{(i)}}\psi^+(z)_m\ket{\emptyset^{(i)}}}\\
&\times\prod_{j=i+1}^n\left(\frac{1-q_2u_j/z}{1-u_j/z}\prod_{m=1}^\infty
\frac{\bra{\la^{(j)}}\psi^+(z)_m\ket{\la^{(j)}}}
{\bra{\emptyset^{(j)}}\psi^+(z)_m\ket{\emptyset^{(j)}}}\right).
\end{align*}
Here $\emptyset^{(i)}$ is used for the trivial partition $\la^{(i)}=(0,0,\dots)$.

It is straightforward to check \eqref{CONSISTENT1} and \eqref{CONSISTENT2} 
by using these formulas.

\end{proof}

As a corollary we have a bosonic formula for the character of set of
the $(k,r)$-admissible partitions. Let
$$
\chi^{k,r}_{\bs a}=\sum_{\La\in \mc P^{k,r}_{\bs a}}q^{|\La-\La^0|}=
\sum_{\La\in \mc P^{k,r}_{\bs a}}q^{\sum_{j=1}^\infty (\La_j-\La_j^0)}
$$
be the character of set of the $(k,r)$-admissible partitions.

\begin{cor}
The character of the set of the $(k,r)$-admissible 
partitions $\P^{k,r}_{\bs a}$ 
coincides with the character of the set $P^{p',p}_{\bs a,\bs 0}$ with 
$p'=k+r$, $p=k+1$, and, in particular, we have the bosonic formula
$$
\chi^{k,r}_{\bs a}= \frac{1}{(q)_\infty}\bar\chi^{k+r,k+1}_{\bs a,\bs 0},
$$
where $\bar \chi^{k+r,k+1}_{\bs a,\bs 0}$ is given by \Ref{bar}.
\end{cor}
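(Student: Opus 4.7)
The plan is to deduce the corollary directly from Theorem~\ref{iso thm} together with Theorem~\ref{thm:Wn}. The $\E$-module isomorphism $\iota:\M_{\bar{\bs a}}\to G_{\bar{\bs a}}$ is built from an underlying bijection of index sets
$$
\iota:\ P_{\bar{\bs a}}=P^{n+r,n+1}_{\bs a,\bs 0}\to \mc P_{\bar{\bs a}}=\mc P^{n,r}_{\bs a},\qquad
\bs\la\mapsto\La,\quad \La_{ns+i}=\la^{(i)}_{s+1}+\La^0_{ns+i},
$$
and it is this bijection, rather than the $\E$-linear structure, that the corollary needs.

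First I would check that $\iota$ is degree-preserving for the appropriate gradings. Rearranging the sum over $j$ according to residues mod $n$, we have
$$
\sum_{j=1}^{\infty}(\La_j-\La^0_j)
=\sum_{s=0}^\infty\sum_{i=1}^{n}(\La_{ns+i}-\La^0_{ns+i})
=\sum_{i=1}^{n}\sum_{s=1}^{\infty}\la^{(i)}_s
=\sum_{i=1}^{n}|\la^{(i)}|,
$$
which is finite because $\la^{(i)}\in\mc P^+$. This gives the identification
$$
\chi^{k,r}_{\bs a}
=\sum_{\La\in\mc P^{k,r}_{\bs a}}q^{\sum_j(\La_j-\La^0_j)}
=\sum_{\bs\la\in P^{k+r,k+1}_{\bs a,\bs 0}}q^{\sum_i|\la^{(i)}|}
=\chi^{k+r,k+1}_{\bs a,\bs 0}.
$$

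Next I would apply Theorem~\ref{thm:Wn} with $n=k$, $p'=k+r$, $p=k+1$, $\bs b=\bs 0$. Note that $b_n$ is forced by $\sum_{i=1}^n(b_i+1)=p=k+1=n+1$, giving $b_n=1$, so $\bs\xi=\sum_{i=1}^n b_i\omega_i=\omega_n=\omega_0$ under the cyclic convention, while $\bs\eta=\sum_{i=1}^n a_i\omega_i$ with $a_n=p'-n-\sum_{i=1}^{n-1}a_i=r-\sum_{i=1}^{n-1}a_i$. Substituting Theorem~\ref{thm:Wn} into the equality above yields the stated bosonic formula $\chi^{k,r}_{\bs a}=(q)_\infty^{-1}\bar\chi^{k+r,k+1}_{\bs a,\bs 0}$.

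There is no serious obstacle here: the only real content is the reindexing computation in the second paragraph, which is routine once one is careful about the convention that $\la^{(i)}_{s+1}$ with $s=0,1,\ldots$ corresponds to $j=ns+i$ in $\La$. The nontrivial work was already absorbed into Theorem~\ref{iso thm} (which supplies the bijection and shows $P^{k+r,k+1}_{\bs a,\bs 0}$ and $\mc P^{k,r}_{\bs a}$ are in natural correspondence) and Theorem~\ref{thm:Wn} (which supplies the bosonic expression).
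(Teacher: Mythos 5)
Your argument is correct and is exactly the route the paper intends (the corollary is left without an explicit proof because it is immediate from Theorem~\ref{iso thm} and Theorem~\ref{thm:Wn}): the graded isomorphism of Theorem~\ref{iso thm} rests on the bijection $\bs\la\mapsto\La$, and your reindexing computation $\sum_j(\La_j-\La^0_j)=\sum_i|\la^{(i)}|$ correctly verifies that it preserves degree, after which Theorem~\ref{thm:Wn} with $n=k$, $\bs b=\bs 0$ (so $b_n=1$, $\bs\xi=\omega_0$) gives the bosonic formula. Nothing is missing.
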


There are other known formulas of the sets of $(k,r)$-admissible
partitions: for bosonic formulas see \cite{FJLMM}, for fermionic
formulas in the case $(p=3)$, see \cite{FJMMT1}, \cite{FJMMT2}.

\section*{Acknowledgments}
Research of BF is partially supported by RFBR initiative
interdisciplinary project grant 09-02-12446-ofi-m, by RFBR-CNRS grant
09-02-93106, RFBR grants 08-01-00720-a, NSh-3472.2008.2 and
07-01-92214-CNRSL-a.  Research of EF was partially supported by the
Russian President Grant MK-281.2009.1, the RFBR Grants 09-01-00058,
07-02-00799 and NSh-3472.2008.2, by Pierre Deligne fund based on his
2004 Balzan prize in mathematics and by Alexander von Humboldt
Fellowship.  Research of MJ is supported by the Grant-in-Aid for
Scientific Research B-20340027.  Research of EM is supported by NSF
grant DMS-0900984. The present work has been carried out during the
visits of BF, EF and EM to Kyoto University. They wish to thank the
University for hospitality.

\end{document}